\renewcommand{\thesubsection}{\thesection(\@roman\c@subsection)}
\newenvironment{NB}{
\color{red}{\bf NB}. \footnotesize
}{}
\newtheorem{Theorem}[equation]{Theorem}
\newtheorem{Corollary}[equation]{Corollary}
\newtheorem{Lemma}[equation]{Lemma}
\newtheorem{Proposition}[equation]{Proposition}
\theoremstyle{definition}
\theoremstyle{remark}
\newtheorem{Remark}[equation]{Remark}
\numberwithin{equation}{section}
\newcommand{\thmref}[1]{Theorem~\ref{#1}}
\newcommand{\secref}[1]{\S\ref{#1}}
\newcommand{\lemref}[1]{Lemma~\ref{#1}}
\newcommand{\propref}[1]{Proposition~\ref{#1}}
\newcommand{\corref}[1]{Corollary~\ref{#1}}
\newcommand{\subsecref}[1]{\S\ref{#1}}
\newcommand{\defeq}{\overset{\operatorname{\scriptstyle def.}}{=}}
\newcommand{\CC}{{\mathbb C}}
\newcommand{\ZZ}{{\mathbb Z}}
\newcommand{\QQ}{{\mathbb Q}}
\newcommand{\proj}{{\mathbb P}}
\newcommand{\CP}{\proj}
\newcommand{\GL}{\operatorname{GL}}
\newcommand{\End}{\operatorname{End}}
\newcommand{\Hom}{\operatorname{Hom}}
\newcommand{\Ext}{\operatorname{Ext}}
\newcommand{\Ker}{\operatorname{Ker}}
\newcommand{\Ima}{\operatorname{Im}}
\newcommand{\tr}{\operatorname{tr}}
\newcommand{\ve}{\varepsilon}
\newcommand{\Wedge}{{\textstyle \bigwedge}}
\newcommand{\shfO}{\mathcal O}
\renewcommand{\MR}[1]{}
\newcommand{\linf}{\ell_\infty}
\newcommand{\Hilb}[2][X]{{#1}^{[#2]}}
\newcommand{\ch}{\operatorname{ch}}
\newcommand{\loc}{\mathbb F}
\newcommand{\ra}{\rangle}
\newcommand{\la}{\langle}
\newcommand{\bk}{\boldsymbol k}
\newcommand{\idl}{I}
\newcommand{\Supp}{\operatorname{Supp}}
\newcommand{\normal}[1]{\mbox{\large\bf:}#1\mbox{\large\bf:}}
\newcommand{\PP}[2]{P_{#2}({#1})}
\newcommand{\hf}{\hfil}
\newcommand{\hs}{\heartsuit}
\newcommand{\sps}{\spadesuit}
\newcommand{\cs}{\clubsuit}
\newcommand{\ds}{\diamondsuit}
\begin{document}
\title[More lectures on Hilbert schemes]
{More lectures on Hilbert schemes of points on surfaces
}
\author{Hiraku Nakajima}
\address{Research Institute for Mathematical Sciences,
Kyoto University, Kyoto 606-8502,
Japan}
\email{nakajima@kurims.kyoto-u.ac.jp}

\dedicatory{Dedicated to Professor Shigeru Mukai on the occasion of
  his 60th birthday}

\subjclass[2000]{Primary 14C05; Secondary 14D21, 14J60}
\maketitle

\section*{Introduction}

This paper is based on author's lectures at Kyoto University in 2010
Summer, and in the 6th MSJ-SI `Development of Moduli Theory' at RIMS
in June 2013.

The purpose of lectures was to review several results on Hilbert
schemes of points which were obtained after author's lecture note
\cite{Lecture} was written. Among many results, we choose those which
are about equivariant homology groups $H^T_*(\Hilb{n})$ of Hilbert
schemes of points on the affine plane $X = \CC^2$ with respect to the
torus action. Study of equivariant homology groups increases its
importance recently. In particular, it is a basis of the AGT
correspondence between instanton moduli spaces on $\CC^2$ and the
representation theory of $W$-algebras, which is a very hot topic now
(see e.g., \cite{MO}).

We omit proofs if they are present in \cite{Lecture}, but give
self-contained proofs otherwise. In this sense, this should be read
after \cite{Lecture}.

The paper is organized as follows. In \secref{sec:equiv}, we review
basics on equivariant (co)homology groups. It will be basis of
subsequent sections. In \secref{sec:Hilb} we construct the Fock
representation of the Heisenberg algebra on $\bigoplus
H^T_*(\Hilb{n})$, following \cite[Ch.~8]{Lecture} as well as an idea
of Vasserot \cite{VasserotC2}. In \secref{sec:Jack} we explain a
geometric realization of Jack symmetric functions as fixed point
classes in $H^T_*(\Hilb{n})$ by Li-Qin-Wang \cite{LQW-jack}. We also
give author's unpublished result, which was used in
\cite{LQW-jack}. As applications, we give geometric proofs of the norm
formula and Pieri formula of Jack symmetric functions. In
\secref{sec:Vir} we construct a representation of the Virasoro algebra
on $\bigoplus H^T_*(\Hilb{n})$. It is a special case of Lehn's result
\cite{Lehn} for $X=\CC^2$, but the proof is different.

\subsection*{Acknowledgment}
This work was supported by the Grant-in-Aid for Scientific Research
(B) (No.~23340005), Japan Society for the Promotion of Science.

\section{Equivariant (co)homology groups}\label{sec:equiv}

In this section, we review basics on equivariant cohomology and
homology groups, which will be used in later sections. Our definition
of equivariant cohomology groups is slightly different from the usual
one (e.g., as in \cite{Audin}). We replace the classifying space $BT$
by its finite dimensional approximations. This approach is suitable
for equivariant homology groups, and was taken by Lusztig
\cite{Lu-cus}. The same approach was used for equivariant Chow groups
\cite{Edidin-Graham}.

We assume coefficients of cohomology groups are $\CC$.

\subsection{Equivariant cohomology groups}

Let $T=(\CC^*)^r$ be an algebraic torus. Let $V = (\CC^{N+1})^r$ be a
$T$-module, where each $\CC^*$ acts on the corresponding $\CC^N$ by
multiplication. Then $V\setminus\{0\}\to (\CP^{N})^r$ is a $T$-bundle,
and the universal $T$-bundle $ET\to BT$ is the inductive limit when
$N\to \infty$.
Let $M$ be a $T$-variety, i.e., an algebraic variety with an algebraic
$T$-action. We further assume that $M$ admits a locally closed
$T$-embedding into a smooth projective $T$-variety. This restriction
can be weakened, but it is enough for our purpose.

We define the equivariant cohomology of $M$ by
\begin{equation*}
  H^i_T(M) \defeq H^i(M_V)
  \quad\text{where }M_V = (V\setminus\{0\})\times_T M.
\end{equation*}
Here for a given $i$, we take $V$ with sufficiently large $N$. Then it is well-defined thanks to the following lemma:

\begin{Lemma}
  $H^i_T(M)$ is independent of the choice of $V$.
\end{Lemma}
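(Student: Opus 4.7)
The plan is to compare two choices $V_1 = (\CC^{N_1+1})^r$ and $V_2 = (\CC^{N_2+1})^r$ by interpolating through $\mathring V_1 \times \mathring V_2 \subset V_1 \oplus V_2 = (\CC^{N_1+N_2+2})^r$, where I write $\mathring V$ for the free locus $V \setminus \{0\} = (\CC^{N+1}\setminus\{0\})^r$. First I would note that the diagonal $T$-action on $\mathring V_1 \times \mathring V_2 \times M$ is free (since $T$ already acts freely on each $\mathring V_j$), and that the two coordinate projections
\[
  \mathring V_1 \times \mathring V_2 \times M \longrightarrow \mathring V_j \times M \qquad (j=1,2)
\]
are $T$-equivariant. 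Passing to the quotient gives maps
\[
  p_j \colon (\mathring V_1 \times \mathring V_2) \times_T M \longrightarrow \mathring V_j \times_T M.
\]

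The key observation is that $p_j$ is an associated fiber bundle: viewing $\mathring V_j \times M \to \mathring V_j \times_T M$ as a principal $T$-bundle (using freeness on $\mathring V_j$) and the source as $(\mathring V_j \times M) \times \mathring V_{3-j}$ with diagonal $T$-action, the map $p_j$ is the fiber bundle associated to this principal bundle with fiber $\mathring V_{3-j}$. Now $\mathring V_{3-j}$ is homotopy equivalent to $(S^{2N_{3-j}+1})^r$, hence $2N_{3-j}$-connected. The Leray--Serre spectral sequence (with local coefficients, which are trivial in degrees $q \le 2N_{3-j}$ since the fiber is simply connected and $H^q$ vanishes there) therefore collapses to yield
\[
  p_j^* \colon H^i(\mathring V_j \times_T M) \xrightarrow{\;\sim\;} H^i\bigl((\mathring V_1\times \mathring V_2)\times_T M\bigr) \qquad \text{for } i \le 2N_{3-j}.
\]
Composing $(p_1^*)^{-1}$ with $p_2^*$ gives a canonical isomorphism between the two candidate definitions of $H^i_T(M)$, valid once $\min(N_1,N_2) \ge i/2$.

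To finish, I would verify that these isomorphisms form a coherent system. For three choices $V_1, V_2, V_3$ with all $N_k$ large, introduce $\mathring V_1 \times \mathring V_2 \times \mathring V_3 \times_T M$ and apply the same argument to the three projections; the commutativity of the resulting square of isomorphisms follows from the fact that each side is induced by pulling back along the same honest projection of topological spaces. Passing to the colimit (or simply taking $N$ large enough for the fixed degree $i$) then makes $H^i_T(M)$ well-defined.

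The main obstacle is the spectral sequence step: one has to check that $p_j$ is genuinely a locally trivial fibration (not merely a set-theoretic fiber bundle) so that Leray--Serre applies, and that the local system on the base is trivial in the relevant range. Both are handled by the associated-bundle description above together with the high connectivity of $\mathring V_{3-j}$, but the connectivity bound is what forces one to take $N$ depending on $i$; this is unavoidable and is exactly the reason the definition is given degree by degree.
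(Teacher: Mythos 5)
Your proof is correct and is essentially the paper's own argument: both compare $V_1$ and $V_2$ through $(V_1\setminus\{0\}\times V_2\setminus\{0\})\times_T M$ and use that the two projections are fiber bundles with highly connected fibers $V_2\setminus\{0\}$, $V_1\setminus\{0\}$, so the pull-backs are isomorphisms on $H^i$ in the stable range. The extra care you take with local coefficients and the coherence over three choices is fine but beyond what the paper records.
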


\begin{proof}
  A key point is that $H^i(V\setminus\{0\}) \cong H^i( (S^{2N+1})^r) =
  0$ if $0 < i < 2N+1$. 

  Take two large vector spaces $V_1$, $V_2$ and consider the diagram
  \begin{equation*}
    M_{V_1} \leftarrow 
    (V_1\setminus\{0\}\times V_2\setminus\{0\})\times_T M
    \rightarrow M_{V_2}.
  \end{equation*}
  The left and right arrows are fiber bundles with fibers
  $V_2\setminus\{0\}$, $V_1\setminus\{0\}$ respectively. Since their
  cohomology groups vanishes in degree between $1$ and sufficiently
  large number (in particular, larger than $i$), $H^i(M_{V_1})$ and
  $H^i(M_{V_2})$ are isomorphic to $H^i$ of the middle under the
  pull-back homomorphisms.
\end{proof}

Let us briefly explain several important properties of equivariant
cohomology groups.

When $M = \mathrm{pt}$, we have $H^i_T(\mathrm{pt}) =
H^i((\CP^N)^r)$. Note that $H^*(\CP^N) = \CC[a]/(a^{N+1} = 0)$.
Taking $N\to\infty$, we have
\begin{equation}\label{eq:20}
  H^*_T(\mathrm{pt}) \cong \CC[a_1,\dots,a_r],
\end{equation}
where $a_i$ is the first Chern class of the hyperplane bundle
$\shfO(1)$ of the $i^{\mathrm{th}}$ factor of $(\CP^N)^r$.
As this example shows, $H^i_T(M)$ may be nonzero in arbitrary large
degree $i$ unlike ordinary cohomology groups.

When $T$ acts trivially on $M$, we have
\begin{equation}\label{eq:24}
    H^*_T(M) \cong H^*(M)\otimes H^*_T(\mathrm{pt})
\end{equation}
from the definition.

We have a cup product
\begin{equation*}
  H^i_T(M)\otimes H^j_T(M)\to H^{i+j}_T(M).
\end{equation*}
The isomorphism \eqref{eq:20} is a ring isomorphism.

If $f\colon M_1\to M_2$ is a $T$-equivariant continuous map, we have a
pull-back map $H^*_T(M_2)\to H^*_T(M_1)$. It is a ring
homomorphism. In particular, we always have $H^*_T(\mathrm{pt})\to
H^*_T(M)$. Therefore $H^*_T(M)$ is a ring over $H^*_T(\mathrm{pt})
\cong \CC[a_1,\dots,a_r]$. We consider $H^*_T(M)$ as a coherent sheaf
on $\operatorname{Spec}(\CC[a_1,\dots,a_r]) = \CC^r$ in this way, and
this view point is useful in the localization theorem below.

Suppose $T$ acts freely on $M$, and $M\to M/T$ is a principal
$T$-bundle. Then $M_V$ is a fiber bundle over $M/T$ with fiber
$V\setminus\{0\}$. Since the fiber has trivial cohomology groups, the
spectral sequence for a fiber bundle gives us
\begin{equation*}
  H^i_T(M) \cong H^i(M/T).
\end{equation*}
Moreover, $H^{>0}_T(\mathrm{pt})$ acts by $0$, as it is so on the $E^2$ term.
In this case, $H^i_T(M)$ vanishes if $i$ is sufficiently large for a
reasonable $M/T$.

Note that $M_V\to (\CP^N)^r$ is a fiber bundle with fiber $M$. The
restriction to a fiber gives a forgetful homomorphism
\begin{equation*}
  H^*_T(M)\to H^*(M).
\end{equation*}
More generally, we have a restriction homomorphism
\begin{equation*}
  H^*_T(M)\to H^*_{T'}(M)
\end{equation*}
for a subtorus $T'\subset T$. Considering the case $M=\mathrm{pt}$, we
find that we have an intrinsic description of \eqref{eq:20}:
\begin{equation*}
  H^*_T(\mathrm{pt}) \cong \CC[\operatorname{Lie}T],  
\end{equation*}
where $\CC[\operatorname{Lie}T]$ is the ring of polynomial functions
on $\operatorname{Lie}T$. The homomorphism $H^*_T(\mathrm{pt}) =
\CC[\operatorname{Lie}T]\to H^*_{T'}(\mathrm{pt}) =
\CC[\operatorname{Lie}T']$ is induced from the embedding
$\operatorname{Lie}T'\to \operatorname{Lie}T$.

If $E$ is a $T$-equivariant (complex) vector bundle over $M$, it
induces a vector bundle $E_V = (V\setminus\{0\})\times_T E$ over
$M_V$. We define an equivariant Chern class $c_i(E)$ by $c_i(E_V)$. 
If $E$ is rank $r$, the top Chern class $c_r(E)$ is equal to the
equivariant Euler class $e(E)$, defined in the same way.

If $M=\mathrm{pt}$, a $T$-equivariant vector bundle is nothing but a
representation of $T$.
The above $a_i$ in \eqref{eq:20} is the equivariant first Chern
class of the representation $T\to \CC^*$, the projection to the
$i^{\mathrm{th}}$ factor. Then $c_i(E)$ is the $i^{\mathrm{th}}$ elementary symmetric function of weights of $E$, regarded as a representation of $T$.
Here weights are considered as linear functions
$\operatorname{Lie}T\to\CC$.
The equivariant Euler class $e(E)$ is the product of weights.

\subsection{Equivariant homology groups}

In \cite[Ch.8]{Lecture} we used both ordinary and Borel-Moore (or
locally finite) homology groups to deal with Hilbert schemes of points
on a noncompact surface $X$, like $X=\CC^2$. Here in the equivariant
case, we mainly use Borel-Moore homology groups, since it fits better with convolution products.

Let $M$ be a $T$-variety as above. We define
\begin{equation*}
  H_i^{T,lf}(M) \defeq H^{lf}_{i+2\dim V - 2\dim T}(M_V).
\end{equation*}
It is independent of $V$ by the same argument as above, where we use
$H^{lf}_i(V\setminus\{0\}) = 0$ for $2(\dim V - N - 1)< i < 2\dim V$.
Then $H^{T,lf}_*(M)$ is a module over $H^*_T(M)$ under the cap
product. In particular, it is also a module over $H^*_T(\mathrm{pt})$.

If $T$ acts free on $M$ and $M\to M/T$ is a fiber bundle, we have
\begin{equation}\label{eq:21}
  H^{T,lf}_i(M) \cong H^{lf}_{i-2\dim T}(M/T).
\end{equation}

If $M$ is a smooth manifold of $\dim M = m$ with a smooth $T$-action,
we define the equivariant fundamental class $[M]$ as $[M_V]\in
H^{lf}_{m+2\dim V- 2\dim T}(M_V)$. We have the Poincar\'e duality
\begin{equation}\label{eq:PD}
  H^i_T(M) \cong H^{T,lf}_{m-i}(M); \qquad c\mapsto c\cap [M]
\end{equation}

Even for an irreducible complex algebraic variety $M$ with a
$T$-action, not necessarily smooth, its fundamental class $[M]\in
H^{T,lf}_{2\dim M}(M)$ is defined, as $[M_V]$ is defined. However the
homomorphism $H^i_T(M)\to H^{T,lf}_{2\dim M-i}(M)$ may not be an
isomorphism in general. Note also that
\begin{equation}\label{eq:23}
  H^{T,lf}_i(M) = 0 \quad \text{if $i>2\dim M$}.
\end{equation}
On the other hand, $H^{T,lf}_i(M)$ may be nonzero even for $i < 0$.

If $f\colon M_1\to M_2$ is a proper $T$-equivariant map, we have the
push-forward homomorphism $f_*\colon H^{T,lf}_*(M_1)\to H^{T,lf}_*(M_2)$.

Considering the spectral sequence for the fiber bundle $M_V\to
(\CP^N)^r$, we have a forgetful homomorphism
\begin{equation*}
  H^{T,lf}_i(M)\to H^{lf}_i(M).
\end{equation*}

We define the ordinary equivariant homology group $H^T_i(M)$ as the
dual space to $H^i_T(M)$. This is enough for our purpose thanks to the
universal coefficient theorem, as we only consider complex
coefficients.
\begin{NB}
  More concretely, $H^T_i(M) = H_i(M_V)$.
\end{NB}%
We have $f_*\colon H^T_i(M_1)\to H^T_i(M_2)$ for a $T$-equivariant
map, not necessarily proper. It is defined as the transpose of
$f^*$. If $M$ is a smooth $T$-manifold, we have the cap product
\begin{equation*}
  \cap \colon 
  H^{T,lf}_{i}(M)\otimes H^T_{j}(M)\to H^{T}_{i+j-\dim M}(M)
\end{equation*}
thanks to \eqref{eq:PD}. We define the intersection pairing
\begin{equation*}
  H^{T,lf}_{i}(M)\otimes H^T_{j}(M)\to H^{T}_{i+j-\dim M}(\mathrm{pt})
  \cong H^{\dim M-i-j}_T(\mathrm{pt})
\end{equation*}
as $a_{M*}(\bullet\cap\bullet)$, where $a_M\colon M\to
\mathrm{pt}$. The second isomorphism is the Poincar\'e duality for
$\mathrm{pt}$.

We do not review further properties of equivariant Borel-Moore
homology groups, which we will use implicitly in the next
section. They are listed in \cite[\S8.2]{Lecture} for nonequivariant
Borel-Moore homology groups, and equivariant versions are simple
consequences of nonequivariant ones applied to $M_V$.

\subsection{Localization theorem}

In this subsection we explain the localization theorem in equivariant
Borel-Moore homology groups, which relates the equivariant Borel-Moore
homology of $M$ and that of the fixed point set $M^T$. In many
situations, the fixed point set consists of a finite set, so the
latter is just a direct sum of the equivariant cohomology of points
(cf.\ \eqref{eq:24}). Therefore the localization theorem is useful to
say something on $H^{T,lf}_*(M)$.

A key point is that we view $H_*^T(M)$ as a module over
$H^*_T(\mathrm{pt}) \cong \CC[\operatorname{Lie}T]$, or a sheaf on
$\operatorname{Lie}T$.

Recall that $H^{>0}_T(\mathrm{pt})$ acts trivially on $H_*^T(M)$ when
$T$ acts freely on $M$, see \eqref{eq:21}. Therefore the support
of $H_*^T(M)$ is $0$ in $\operatorname{Lie}T$.
More generally, we have
\begin{Lemma}\label{lem:strata}
    Suppose that the stabilizers of arbitrary points $x\in M$ is a
    fixed subgroup of $T'\subset T$. Then the support of $H_*^T(M)$ is
    contained in $\operatorname{Lie}(T')$.
\end{Lemma}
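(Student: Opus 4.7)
The plan is to reduce to the free-action case \eqref{eq:21} by splitting the torus. Since $T'$ is the stabilizer of every point of $M$, the subgroup $T'$ acts trivially on $M$, and the quotient torus $T'' \defeq T/T'$ acts freely. Choose a splitting $T \cong T' \times T''$, so that $\operatorname{Lie}T = \operatorname{Lie}T' \oplus \operatorname{Lie}T''$ and correspondingly $\CC[\operatorname{Lie}T] \cong \CC[\operatorname{Lie}T'] \otimes \CC[\operatorname{Lie}T'']$. The ideal of functions vanishing on $\operatorname{Lie}T' \subset \operatorname{Lie}T$ is generated by $\operatorname{Lie}(T'')^{*} \subset \CC[\operatorname{Lie}T]$, so it will suffice to show that this ideal acts by zero on $H^T_*(M)$.

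First I would choose an approximation $V = V' \oplus V''$ with $V'$ a $T'$-module and $V''$ a $T''$-module (each of large dimension). By the same codimension argument as in the preceding lemma, the inclusion $(V'\setminus\{0\}) \times (V''\setminus\{0\}) \hookrightarrow V\setminus\{0\}$ is an isomorphism on (Borel--Moore) homology in the relevant range. Hence up to the degree we care about,
\begin{equation*}
   M_V \;\simeq\; \bigl((V'\setminus\{0\}) \times (V''\setminus\{0\})\bigr)\times_{T'\times T''} M.
\end{equation*}
Since $T'$ acts trivially on $M$ and on $V''$, quotienting by $T'$ first and then by $T''$ yields a product decomposition
\begin{equation*}
   M_V \;\simeq\; \bigl((V'\setminus\{0\})/T'\bigr)\,\times\,\bigl((V''\setminus\{0\})\times_{T''} M\bigr).
\end{equation*}

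Next, because $T''$ acts freely on $M$, the second factor $(V''\setminus\{0\})\times_{T''} M$ is a fiber bundle over $M/T''$ with fiber $V''\setminus\{0\}$, whose cohomology is trivial in the relevant range (as in the proof of the first lemma). A K\"unneth plus spectral sequence computation then gives, in the relevant range,
\begin{equation*}
   H^*_T(M) \;\cong\; H^*_{T'}(\mathrm{pt})\otimes H^*(M/T''),
   \qquad
   H^T_*(M) \;\cong\; H^*_{T'}(\mathrm{pt})\otimes H_*(M/T''),
\end{equation*}
where the second isomorphism follows from the first by taking duals (as in the definition of $H^T_*$).

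Finally I would trace through the $H^*_T(\mathrm{pt})$-module structure. The projection $M_V \to (V\setminus\{0\})/T$ factors (up to the identifications above) as the product of the identity on $(V'\setminus\{0\})/T'$ with the projection $(V''\setminus\{0\})\times_{T''} M \to (V''\setminus\{0\})/T''$. The latter is a fiber bundle with fiber $M$, but it is also equivalent, via $(V''\setminus\{0\})\times_{T''} M \to M/T''$, to the constant map onto a point of $(V''\setminus\{0\})/T''$. Hence the pullback $H^{>0}_{T''}(\mathrm{pt}) \to H^*((V''\setminus\{0\})\times_{T''} M) \cong H^*(M/T'')$ vanishes, and so $H^{>0}_{T''}(\mathrm{pt}) \subset H^*_T(\mathrm{pt})$ acts by zero on $H^T_*(M)$. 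The support of $H^T_*(M)$ is therefore contained in the vanishing locus of $\operatorname{Lie}(T'')^*$, which is $\operatorname{Lie}(T')$.

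The main obstacle is the careful bookkeeping of approximations: one must check that the replacement of $V\setminus\{0\}$ by $(V'\setminus\{0\})\times(V''\setminus\{0\})$ and the cohomological triviality of all the auxiliary fibers are valid simultaneously up to a chosen cohomological degree, and then let the approximations grow. Once this is in place, the reduction to the free-action identity \eqref{eq:21} and the triviality of the $H^{>0}_{T''}(\mathrm{pt})$-action is immediate.
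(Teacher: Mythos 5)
Your overall strategy is the same as the paper's: exhibit $M_V$ (up to the approximation bookkeeping) as a product of a $BT'$-approximation with a space on which the ``$T''$-direction'' of $H^*_T(\mathrm{pt})$ acts through the free quotient, then read off the support. The paper gets there in one step, without splitting the torus: $M_V\to M/(T/T')$ is a fiber bundle with fiber $(V\setminus\{0\})/T'$, an approximation of $BT'$, so the $\CC[\operatorname{Lie}T]$-action on the $E_2$ term already factors through $\CC[\operatorname{Lie}T']$. Your product decomposition is a legitimate variant, but note one hypothesis you are silently adding: the splitting $T\cong T'\times T''$ exists only when $T'$ is a subtorus. The lemma (and its application to the stratification of $M\setminus M^T$ by stabilizers) allows $T'$ to be an arbitrary closed subgroup, e.g.\ finite or disconnected. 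This is repairable --- split off the identity component $(T')^{\circ}$ and observe that the residual finite group is invisible to $\CC$-coefficient cohomology --- but as written your first move does not apply to the general case, whereas the paper's quotient $T/T'$ makes sense for any $T'$.

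The one step that is actually false is the claim that the projection $(V''\setminus\{0\})\times_{T''}M\to(V''\setminus\{0\})/T''$ is ``equivalent to the constant map onto a point,'' hence that $H^{>0}_{T''}(\mathrm{pt})$ pulls back to zero. That map classifies the pullback of the principal bundle $M\to M/T''$ and is null-homotopic only when that bundle is trivial. Concretely, for $T=\CC^*$ acting on $M=\CC^2\setminus\{0\}$ by scaling one finds $H^*_T(M)\cong\CC[a]/(a^2)\cong H^*(\CP^1)$, on which $a\in H^2_T(\mathrm{pt})$ acts by multiplication by the hyperplane class --- nonzero, though nilpotent. What is true, and is all the support statement needs, is that $H^{>0}_{T''}(\mathrm{pt})$ maps into $H^{>0}(M/T'')$, whose elements are nilpotent under cup product since $M/T''$ has bounded cohomology; so the ideal cutting out $\operatorname{Lie}(T')$ acts nilpotently rather than by zero. (The paper's own phrase ``$H^{>0}_T(\mathrm{pt})$ acts by $0$'' in the free case should be read the same way: it is zero on the associated graded of the Leray filtration, hence nilpotent on the abutment.) With that correction, and the fix for disconnected stabilizers, your argument goes through.
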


\begin{proof}
    By the assumption, $M_V$ is a fiber bundle over $M/(T/T')$ with
    fiber $V\setminus\{0\}/T'$. Then the action of
    $H^*_T(\mathrm{pt})\cong \CC[\operatorname{Lie}T]$ factors through
    $\CC[\operatorname{Lie}T']$ on the $E^2$ term, and hence also on
    $H_*^T(M)$.
\end{proof}

Let $M^T$ be the fixed point set in $M$, and consider
$H^T_*(M\setminus M^T)$. Since $M$ can be equivariantly embedded into
a projective space, there are only finitely many stabilizers occur. We
claim
\begin{equation}\label{eq:22}
    \operatorname{Supp} H^T_*(M\setminus M^T)
    \subset \bigcup_{x\in M\setminus M^T}      \operatorname{Lie}(\operatorname{Stab}(x)).
\end{equation}
We decompose $M\setminus M^T$ according to stabilizers into $\bigsqcup
M_\alpha$. And we can order the index set $\{\alpha\}$ so that $M_{\le
  \alpha} = \bigcup_{\beta:\beta\le \alpha} M_\beta$ is closed in
$M\setminus M^T$. We set $M_{<\alpha} = \bigcup_{\beta:\beta <
  \alpha} M_\beta$. From an exact sequence
\begin{equation*}
    \cdots \to H^T_i(M_{< \alpha}) \to H^T_i(M_{\le \alpha}) \to H^T_i(M_\alpha) 
    \to H^T_{i-1}(M_{< \alpha}) \to \cdots
\end{equation*}
and \lemref{lem:strata} applied to $M_\alpha$, we deduce that
the support of
\(
    H^T_*(M_{\le \alpha})
\)
is contained in the right hand side of \eqref{eq:22} by an induction
on $\alpha$. Therefore using the exact sequence for
$H^T_*(M)$, $H^T_*(M^T)$, $H^T_*(M\setminus M^T)$, we get
\begin{Theorem}
    Let $i\colon M^T\to M$ be the inclusion of the fixed point
    set. Then the kernel and cokernel of the homomorphism
    \begin{equation*}
        i_*\colon H^T_*(M^T)\to H^T_*(M)
    \end{equation*}
    are supported in $\bigcup_{x\in M\setminus M^T}      \operatorname{Lie}(\operatorname{Stab}(x))$.
\end{Theorem}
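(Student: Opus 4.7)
The plan is to chase the long exact sequence of the closed pair $(M^T,M)$ in $H^T_*$ and combine it with the support estimate \eqref{eq:22} for the open complement $U := M\setminus M^T$. Concretely, the closed inclusion $M^T\hookrightarrow M$ with open complement $U$ yields
\[
\cdots \to H^T_{i+1}(U) \to H^T_i(M^T) \xrightarrow{i_*} H^T_i(M) \to H^T_i(U) \to \cdots,
\]
so $\ker i_*$ is a quotient of $H^T_{i+1}(U)$ and $\Coker i_*$ injects into $H^T_i(U)$. Both are then supported wherever $H^T_*(U)$ is, and the theorem follows once \eqref{eq:22} is established.

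To obtain \eqref{eq:22}, I would stratify $U$ by stabilizer type. Since $M$ admits a $T$-equivariant embedding into a smooth projective $T$-variety, and stabilizers of points of a finite-dimensional $T$-representation are cut out by vanishing of finitely many characters, only finitely many subgroups $T_1,\dots,T_N\subset T$ occur as stabilizers. Writing $M_\alpha$ for the locally closed piece on which $\operatorname{Stab}(x) = T_\alpha$, order the index set so that $M_{\le\alpha} := \bigcup_{\beta\le\alpha} M_\beta$ is closed in $U$ and $M_\alpha = M_{\le\alpha}\setminus M_{<\alpha}$. On each $M_\alpha$ the stabilizer is constant equal to $T_\alpha$, so \lemref{lem:strata} gives $\Supp H^T_*(M_\alpha) \subset \operatorname{Lie}(T_\alpha)$. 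An induction on $\alpha$ using the long exact sequence
\[
\cdots \to H^T_i(M_{<\alpha}) \to H^T_i(M_{\le\alpha}) \to H^T_i(M_\alpha) \to H^T_{i-1}(M_{<\alpha}) \to \cdots
\]
and the elementary fact that, in a three-term exact sequence of $\CC[\operatorname{Lie}T]$-modules, the support of the middle term is contained in the union of the supports of the outer terms, then propagates the bound to $H^T_*(M_{\le\alpha}) \subset \bigcup_{\beta\le\alpha}\operatorname{Lie}(T_\beta)$. Taking $\alpha$ maximal yields \eqref{eq:22}.

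The only substantive input is \lemref{lem:strata}, which has already been supplied; the main potential obstacle is then a bookkeeping one, namely the finiteness of the stratification and the choice of a compatible ordering on $\{\alpha\}$. Finiteness follows from the equivariant projective embedding as indicated above, and an admissible ordering exists because the closure of each $M_\alpha$ meets only those $M_\beta$ whose stabilizer $T_\beta$ contains $T_\alpha$, so one orders by (any linear refinement of) reverse inclusion of stabilizers. Everything else is a formal diagram chase together with the observation that $H^T_*$ is naturally a $\CC[\operatorname{Lie}T]$-module via the cap product with $H^*_T(\mathrm{pt})$.
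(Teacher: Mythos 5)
Your argument is correct and follows essentially the same route as the paper: stratify $M\setminus M^T$ by stabilizer type, order the strata so the partial unions are closed, induct using the long exact sequences together with Lemma~\ref{lem:strata} to get \eqref{eq:22}, and then read off the statement about $\ker i_*$ and $\operatorname{Coker} i_*$ from the exact sequence for $M^T$, $M$, $M\setminus M^T$. Your added remarks on the finiteness of stabilizer types and on ordering by reverse inclusion of stabilizers only make explicit what the paper asserts from the equivariant projective embedding.
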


\begin{Corollary}\label{cor:i*}
    Let $\operatorname{Frac}(H^*_T(\mathrm{pt}))$ be the fractional
    field of $H^*_T(\mathrm{pt})$ and let $H^T_*(M)_\loc =
    H^T_*(M)\otimes_{H^*_T(\mathrm{pt})}\operatorname{Frac}(H^*_T(\mathrm{pt}))$, and
    similarly for $H^T_*(M^T)_\loc$.
    Then
    \begin{equation*}
        i_*\colon H^T_*(M^T)_\loc\to H^T_*(M)_\loc
    \end{equation*}
    is an isomorphism.
\end{Corollary}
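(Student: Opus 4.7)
The plan is to deduce the corollary directly from the preceding theorem by showing that the kernel and cokernel of $i_*$ become zero after tensoring with $\operatorname{Frac}(H^*_T(\mathrm{pt}))$.

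First, I would unpack what ``supported in $\bigcup_{x \in M\setminus M^T}\operatorname{Lie}(\operatorname{Stab}(x))$'' buys us concretely. Since $M$ admits a $T$-equivariant embedding into a projective $T$-variety, only finitely many stabilizers $T'_\alpha \subset T$ occur on $M\setminus M^T$, and for each such $\alpha$, $T'_\alpha$ is a proper closed subgroup of $T$, so $\operatorname{Lie}(T'_\alpha)$ is a proper linear subspace of $\operatorname{Lie}(T)$. The proof of \lemref{lem:strata} shows that the $H^*_T(\mathrm{pt})\cong \CC[\operatorname{Lie}T]$-action on $H^T_*(M_\alpha)$ factors through the restriction $\CC[\operatorname{Lie}T]\to \CC[\operatorname{Lie}T'_\alpha]$; hence every polynomial $f_\alpha \in \CC[\operatorname{Lie}T]$ vanishing on $\operatorname{Lie}(T'_\alpha)$ annihilates $H^T_*(M_\alpha)$.

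Next, I would pick, for each $\alpha$, a nonzero linear form $f_\alpha \in \CC[\operatorname{Lie}T]$ that vanishes on the proper subspace $\operatorname{Lie}(T'_\alpha)$, and form the finite product $f \defeq \prod_\alpha f_\alpha$. Using the long exact sequence
\begin{equation*}
\cdots \to H^T_i(M_{<\alpha})\to H^T_i(M_{\le\alpha})\to H^T_i(M_\alpha)\to H^T_{i-1}(M_{<\alpha})\to\cdots
\end{equation*}
and induction on $\alpha$, each $H^T_*(M_{\le\alpha})$ is annihilated by a suitable product of the $f_\beta$; in particular $H^T_*(M\setminus M^T)$ is annihilated by $f$. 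Then from the exact sequence relating $H^T_*(M^T)$, $H^T_*(M)$, $H^T_*(M\setminus M^T)$, the kernel and cokernel of $i_*$ are annihilated by $f$.

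Finally, since $f$ is a nonzero element of $H^*_T(\mathrm{pt})$, it is invertible in $\operatorname{Frac}(H^*_T(\mathrm{pt}))$, and tensoring the exact sequence over $H^*_T(\mathrm{pt})$ with the (flat) localization kills every $f$-torsion module. Hence $\operatorname{Ker}(i_*)_\loc = 0 = \operatorname{Coker}(i_*)_\loc$, proving the isomorphism. The only subtle point I expect is the bookkeeping between ``support'' and ``annihilation'' for modules that are not a priori finitely generated; this is why I would phrase the argument stratum-by-stratum via the exact sequences above, where at each step the relevant piece $H^T_*(M_\alpha)$ is genuinely annihilated (not merely supported) by an explicit nonzero polynomial.
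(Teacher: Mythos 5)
Your proposal is correct and follows essentially the same route as the paper: the paper's proof of the preceding theorem is exactly your stratification of $M\setminus M^T$ by stabilizers together with \lemref{lem:strata} and the long exact sequences, and the corollary is then the immediate observation that modules supported (equivalently, as you make explicit, annihilated by a nonzero polynomial vanishing) on a finite union of proper subspaces of $\operatorname{Lie}T$ die after tensoring with $\operatorname{Frac}(H^*_T(\mathrm{pt}))$. Your replacement of ``support'' by explicit annihilators $f=\prod_\alpha f_\alpha$ is a harmless sharpening of the same argument, not a different method.
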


\subsection{Fixed point formula}\label{sec:fixed}

When $M$ is nonsingular, we have the Poincar\'e duality \eqref{eq:PD}. Therefore we have two isomorphisms
\(
   i_*\colon H^T_*(M^T)_\loc \to H^T_*(M)_\loc
\)
and
\(
   i^*\colon H^T_*(M)_\loc \to H^T_*(M^T)_\loc.
\)
Having both are are very useful, as $i^*i_*$ can be explicitly
written down. This leads us to the {\it fixed point formula}.

Let $M^T = \bigsqcup M_\alpha$ be the decomposition to connected
components. Each $M_\alpha$ is a nonsingular subvariety of $M$. Let
$N_\alpha$ denote its normal bundle.  We have
\begin{equation*}
    H^T_*(M^T) = \bigoplus_\alpha H^T_*(M_\alpha).
\end{equation*}

\begin{Lemma}\label{lem:1}
    Let $i_\alpha$ be the inclusion of $M_\alpha$ into $M$. Then
    $i_\alpha^* i_{\alpha*}$ is given by the cap product
    $e(N_\alpha)\cap\bullet$ of the equivariant Euler class of $N_\alpha$.
\end{Lemma}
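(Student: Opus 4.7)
My plan is to reduce the identity to the classical self-intersection formula for the zero section of a $T$-equivariant vector bundle, via an equivariant tubular neighborhood.

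First I would argue that the composition $i_\alpha^* i_{\alpha *}$ is insensitive to replacing $M$ by any $T$-stable open neighborhood $U \supset M_\alpha$: the push-forward $i_{\alpha *}$ is local in the obvious sense, and $i_\alpha^*$, defined through the Poincar\'e duality \eqref{eq:PD} on the smooth manifold $M$ together with the cohomological pull-back, commutes with restriction to open subsets. By an equivariant tubular neighborhood theorem (valid in the smooth category after restricting to a maximal compact subgroup of $T$, or by Luna's slice theorem on the algebraic side), one can choose $U$ and a $T$-equivariant diffeomorphism $U \cong N_\alpha$ identifying $M_\alpha$ with the zero section. Hence it suffices to prove the formula when $M$ is the total space of the $T$-equivariant complex vector bundle $\pi\colon N_\alpha \to M_\alpha$ and $i_\alpha = s$ is its zero section.

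In that situation I would chain together three standard facts. First, $\pi^*$ and $s^*$ are mutually inverse ring isomorphisms in equivariant cohomology, by homotopy invariance together with $\pi \circ s = \id$. Second, the projection formula combined with the first point yields
\[
   s_*(c) \;=\; \pi^*(c) \cdot s_*(1) \qquad\text{for every class $c$ on } M_\alpha,
\]
after translating through Poincar\'e duality on the smooth manifolds $M_\alpha$ and $N_\alpha$. Third, and this is the heart of the matter, $s^* s_*(1) = e(N_\alpha)$: the class $s_*(1) \in H^{2r}_T(N_\alpha)$, where $r = \operatorname{rank}_{\CC} N_\alpha$, is the image of the equivariant Thom class $\tau \in H^{2r}_T(N_\alpha,\, N_\alpha \setminus M_\alpha)$, and $s^*\tau = e(N_\alpha)$ by the very definition of the equivariant Euler class. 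Combining these gives
\[
   i_\alpha^* i_{\alpha *}(c) \;=\; s^* s_*(c) \;=\; s^*\pi^*(c)\cdot s^* s_*(1) \;=\; e(N_\alpha)\cap c,
\]
which is the asserted formula.

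The principal obstacle I anticipate is the localization step: one must verify carefully that the Poincar\'e-duality-defined $i_\alpha^*$ really depends on no more than a $T$-stable neighborhood of $M_\alpha$, and that a $T$-equivariant tubular neighborhood with the right properties is available in this mixed algebraic/topological context. Once these identifications are in place, the self-intersection computation on the normal bundle is completely routine; everything else is a manipulation of the Thom and Euler classes.
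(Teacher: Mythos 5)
Your argument is essentially the paper's own proof: the paper likewise replaces a neighborhood of $M_\alpha$ in $M$ by a neighborhood of the zero section of $N_\alpha$ and then concludes by the Thom isomorphism, which is exactly your Thom-class/self-intersection computation spelled out in more detail. The extra care you take with the equivariant tubular neighborhood and the projection formula just fills in steps the paper leaves implicit, so the proposal is correct and matches the intended route.
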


\begin{proof}
    In the neighborhood of $M_\alpha$, $M$ is isomorphic to a
    neighborhood of the $0$-section of $N_\alpha$. Therefore we may
    replace $i_\alpha\colon M_\alpha\to M$ by the inclusion
    $M_\alpha\to N_\alpha$ of the $0$-section. We then obtain the
    assertion by the Thom isomorphism.
\end{proof}

\begin{Lemma}\label{lem:2}
    $e(N_\alpha)\cap\bullet$ is invertible in $H^T_*(M_\alpha)_\loc$.
\end{Lemma}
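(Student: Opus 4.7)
The plan is to show that $e(N_\alpha)$ is a unit in the localized equivariant cohomology ring $H^*_T(M_\alpha)_\loc$; by the $H^*_T(M_\alpha)$-module structure on $H^T_*(M_\alpha)$, this immediately implies that the cap product operator $e(N_\alpha)\cap\bullet$ is invertible on $H^T_*(M_\alpha)_\loc$. Since $T$ acts trivially on $M_\alpha$, formula \eqref{eq:24} gives the Künneth decomposition
\begin{equation*}
  H^*_T(M_\alpha) \cong H^*(M_\alpha)\otimes\CC[a_1,\dots,a_r],
\end{equation*}
so the problem is linear-algebraic over this ring.

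Next I would decompose $N_\alpha$ according to $T$-weights. Because $T$ acts trivially on $M_\alpha$, the $T$-equivariant bundle $N_\alpha$ splits as an isotypic decomposition $N_\alpha = \bigoplus_{w} L_w\otimes F_w$, where $w$ runs over characters of $T$, $L_w$ denotes the trivial line bundle on $M_\alpha$ with $T$-action of weight $w$, and $F_w$ is a (non-equivariant) vector bundle on $M_\alpha$. The essential input here is that no weight $w=0$ occurs, for otherwise $N_\alpha$ would contain a trivial $T$-subbundle and the zero-section would be contained in a strictly larger $T$-fixed locus, contradicting the maximality of the connected component $M_\alpha\subset M^T$. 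Applying the splitting principle, the equivariant Euler class factors as
\begin{equation*}
  e(N_\alpha) = \prod_{w}e(L_w\otimes F_w) = \prod_{w}\bigl(w^{\rank F_w}+w^{\rank F_w-1}c_1(F_w)+\cdots+c_{\rank F_w}(F_w)\bigr).
\end{equation*}
Expanding, one gets
\begin{equation*}
  e(N_\alpha) = \Bigl(\prod_{w}w^{\rank F_w}\Bigr) + \eta,
\end{equation*}
with the leading term lying in $\CC[a_1,\dots,a_r]=H^*_T(\mathrm{pt})$ and the remainder $\eta$ lying in $H^{>0}(M_\alpha)\otimes\CC[a_1,\dots,a_r]$.

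To finish, I would observe two facts. First, the leading term $\prod_w w^{\rank F_w}$ is a product of nonzero linear forms on $\operatorname{Lie}T$, hence a nonzero element of $H^*_T(\mathrm{pt})$; after tensoring with $\operatorname{Frac}(H^*_T(\mathrm{pt}))$ it becomes a unit. Second, $M_\alpha$ is a finite-dimensional variety, so $H^i(M_\alpha;\CC)=0$ for $i>2\dim M_\alpha$; consequently every element of $H^{>0}(M_\alpha)$ is nilpotent under cup product, and the same holds in $H^{>0}(M_\alpha)\otimes\CC[a_1,\dots,a_r]$. Therefore $\eta$ is nilpotent in $H^*_T(M_\alpha)_\loc$, and $e(N_\alpha)$ is a unit plus a nilpotent, hence invertible.

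The main obstacle is justifying the weight decomposition of $N_\alpha$ with no zero weight; once that is in hand, the remaining manipulations are formal. One should also be slightly careful that $M_\alpha$ need not be compact, but quasi-projectivity is enough to guarantee the vanishing $H^{i}(M_\alpha)=0$ for $i\gg 0$ needed for the nilpotence argument.
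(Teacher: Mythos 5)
Your argument is correct and is essentially the paper's own proof: both rest on writing $e(N_\alpha)$ in $H^*(M_\alpha)\otimes H^*_T(\mathrm{pt})$ as a nonzero element of $H^0(M_\alpha)\otimes H^*_T(\mathrm{pt})$ (the product of the nonzero weights of the normal directions, nonzero because $T_xM_\alpha$ is exactly the weight-zero part of $T_xM$) plus a nilpotent term in $H^{>0}(M_\alpha)\otimes H^*_T(\mathrm{pt})$. The paper simply reads off the degree-zero part by restricting $N_\alpha$ to a point instead of invoking the isotypic decomposition and splitting principle, but the content is the same.
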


\begin{proof}
    Since $T$ acts trivially on $M_\alpha$, we have $H^*_T(M_\alpha) =
    H^*(M_\alpha)\otimes H^*_T(\mathrm{pt})$. Note that
    $H^{>0}(M_\alpha)\otimes H^*_T(\mathrm{pt})$ is nilpotent as
    $H^{>2\dim M_\alpha}(M_\alpha) = 0$. Therefore it is enough to
    check that $H^0(M_\alpha)\otimes H^*_T(\mathrm{pt})$ part of
    $e(N_\alpha)$ is nonzero. It is enough to study it after
    restricting $N_\alpha$ to a point $x\in M_\alpha$.

    For $x\in M_\alpha$, $T_x M$ is a $T$-module so that $T_x
    M_\alpha$ is its weight $0$ subspace. Therefore
    $\left.e(N_\alpha)\right|_x$ is the product of nonzero weights of
    $T_x M_\alpha$. Therefore it is nonzero.
\end{proof}

Let us denote the inverse of $e(N_\alpha)\cap\bullet$ by $\frac1{e(N_\alpha)}$.

\begin{Theorem}\label{thm:inverse}
    Suppose $M$ is nonsingular. Then the inverse of $i_*$ in
    \corref{cor:i*} is given by
    \begin{equation*}
        \sum_\alpha \frac1{e(N_\alpha)} i_\alpha^*.
    \end{equation*}
\end{Theorem}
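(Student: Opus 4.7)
The plan is to compute the composition $\bigl(\sum_\alpha \frac{1}{e(N_\alpha)} i_\alpha^*\bigr)\circ i_*$ directly and show that it equals the identity on $H^T_*(M^T)_\loc$. Since \corref{cor:i*} already provides that $i_*$ is invertible after localization, exhibiting a one-sided inverse is enough to identify it as the two-sided inverse. So I would not attempt to treat the other composition $i_* \circ \bigl(\sum \frac{1}{e(N_\alpha)} i_\alpha^*\bigr)$ at all.

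Using the decomposition $H^T_*(M^T)_\loc = \bigoplus_\beta H^T_*(M_\beta)_\loc$ and $i_* = \sum_\beta i_{\beta*}$, the composition restricted to the summand $H^T_*(M_\beta)_\loc$ becomes $\sum_\alpha \frac{1}{e(N_\alpha)} i_\alpha^* i_{\beta*}$. The diagonal term is handled by the two preceding lemmas: \lemref{lem:1} identifies $i_\beta^* i_{\beta*}$ with $e(N_\beta)\cap\bullet$, and \lemref{lem:2} says $\frac{1}{e(N_\beta)}$ acts as its inverse on $H^T_*(M_\beta)_\loc$, so the $\alpha=\beta$ summand contributes the identity. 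It therefore remains to verify the vanishing of the off-diagonal terms $i_\alpha^* i_{\beta*} = 0$ for $\alpha\neq\beta$.

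For this step, note that distinct connected components $M_\alpha$ and $M_\beta$ of $M^T$ are disjoint closed subvarieties of $M$. A $T$-equivariant tubular neighborhood identifies an open neighborhood $U$ of $M_\alpha$ with a neighborhood of the zero section in $N_\alpha$, and $U$ can be chosen so that $U\cap M_\beta=\emptyset$. Under this identification, $i_\alpha^*$ factors through restriction to $U$ followed by the Thom isomorphism, as in the proof of \lemref{lem:1}, while any class in the image of $i_{\beta*}$ has support contained in $M_\beta$. Restricting such a class to $U$ therefore produces zero, which kills the off-diagonal composition. Assembling the diagonal contributions (each the identity) with the vanishing of the off-diagonal contributions then yields the desired formula.

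The main obstacle is precisely the off-diagonal vanishing: it amounts to a base-change/support statement for the Gysin pullback on equivariant (Borel-Moore) homology, and once one has committed to describing $i_\alpha^*$ via the Thom isomorphism on a tubular neighborhood (as was already done to prove \lemref{lem:1}), the statement becomes immediate because the tubular neighborhood can be separated from $M_\beta$. All the remaining steps are bookkeeping on top of \lemref{lem:1}, \lemref{lem:2}, and \corref{cor:i*}.
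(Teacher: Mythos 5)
Your argument is correct and is essentially the paper's own proof: decompose $i_*=\sum_\beta i_{\beta*}$ and apply Lemma~\ref{lem:1} and Lemma~\ref{lem:2} to see that the diagonal compositions give the identity, with Corollary~\ref{cor:i*} guaranteeing a one-sided inverse suffices. The only difference is that you spell out the off-diagonal vanishing $i_\alpha^* i_{\beta*}=0$ via disjoint tubular neighborhoods, a point the paper leaves implicit.
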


\begin{proof}
    Note that we have decomposition $H^T_*(M^T) = \bigoplus
    H^T_*(M_\alpha)$, and hence we have $i_* = \sum i_{\alpha*}$. By
    Lemmas~\ref{lem:1},~\ref{lem:2}, $e(N_\alpha)^{-1} i_\alpha^*
    i_{\alpha*}$ is the identity operator on $H^T_*(M_\alpha)_\loc$.
    Therefore we have the assertion.
\end{proof}

We now arrive at Atiyah-Bott-Berline-Vergne fixed point formula.

\begin{Theorem}\label{thm:fixed}
    Assume $M$ is proper and nonsingular. Let $a\colon M\to
    \mathrm{pt}$, $a_\alpha\colon M_\alpha\to\mathrm{pt}$. Then we
    have an equality in $H^T_*(\mathrm{pt})_\loc\cong
    \CC(a_1,\dots,a_r)$ for $\omega\in H^T_*(M)$:
    \begin{equation*}
        a_*(\omega) = \sum_\alpha a_{\alpha*}
        \left(\frac1{e(N_\alpha)} i_\alpha^*\omega\right).
    \end{equation*}
\end{Theorem}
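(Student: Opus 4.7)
The plan is to derive the formula as an immediate consequence of Theorem~\ref{thm:inverse}, combined with the functoriality of pushforward.

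First, since $M$ is proper, so are $M^T$ and each connected component $M_\alpha$, so the pushforwards $a_*$ and $a_{\alpha *}$ are defined on equivariant Borel--Moore homology and hence (dually) on $H^T_*$. The obvious factorizations $a\circ i = a|_{M^T}$ and $a\circ i_\alpha = a_\alpha$ hold on the nose.

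Second, starting with $\omega\in H^T_*(M)$, I view it in the localization $H^T_*(M)_\loc$. Theorem~\ref{thm:inverse} asserts that the inverse of $i_*\colon H^T_*(M^T)_\loc\to H^T_*(M)_\loc$ is $\sum_\alpha \frac{1}{e(N_\alpha)} i_\alpha^*$. Applying $i_*$ to this inverse applied to $\omega$ yields
\[
  \omega \;=\; i_*\!\left(\sum_\alpha \frac{1}{e(N_\alpha)} i_\alpha^*\omega\right)
        \;=\; \sum_\alpha i_{\alpha *}\!\left(\frac{1}{e(N_\alpha)} i_\alpha^*\omega\right),
\]
where the second equality uses the decomposition $i_* = \sum_\alpha i_{\alpha *}$ corresponding to $H^T_*(M^T)_\loc = \bigoplus_\alpha H^T_*(M_\alpha)_\loc$.

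Third, I apply $a_*$ to both sides. Functoriality of pushforward gives $a_*\circ i_{\alpha *} = a_{\alpha *}$, so
\[
  a_*(\omega) \;=\; \sum_\alpha a_{\alpha *}\!\left(\frac{1}{e(N_\alpha)} i_\alpha^*\omega\right),
\]
as required; the equality lives in $H^T_*(\mathrm{pt})_\loc \cong \CC(a_1,\dots,a_r)$.

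There is essentially no obstacle here: the substantive work has already been done in Theorem~\ref{thm:inverse} and its supporting Lemmas~\ref{lem:1} and~\ref{lem:2}, which used the Thom isomorphism and non-vanishing of the equivariant Euler class of the normal bundle to each fixed component. The only routine point to verify is that $a_*$ is $H^*_T(\mathrm{pt})$-linear, hence descends to the localizations and commutes with the finite direct sum decomposition of $H^T_*(M^T)_\loc$; this is immediate from the projection formula applied to $a$.
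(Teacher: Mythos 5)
Your proposal is correct and follows exactly the paper's argument: both write $a_*(\omega)=a_*i_*i_*^{-1}(\omega)$, substitute the formula $i_*^{-1}=\sum_\alpha \frac1{e(N_\alpha)}i_\alpha^*$ from \thmref{thm:inverse}, and conclude via $a_\alpha=a\circ i_\alpha$. No differences worth noting.
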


\begin{proof}
    We have
    \begin{equation*}
        a_*(\omega) = a_* i_* i_*^{-1} (\omega)
        = \sum_\alpha a_* i_{\alpha*} \frac1{e(N_\alpha)} i_\alpha^*(\omega).
    \end{equation*}
Since $a_\alpha = a \circ i_\alpha$, we get the assertion.
\end{proof}

\section{Equivariant homology groups of 
Hilbert schemes of points}\label{sec:Hilb}

We explain the construction of the Fock space representation of the
Heisenberg algebra in \cite[Ch.~8]{Lecture} in 
equivariant homology groups in this section. It was first noticed by
Vasserot \cite{VasserotC2} that the arguments in \cite[Ch.~8]{Lecture}
work in the equivariant setting, and such a generalization is quite
useful.

\subsection{Heisenberg algebra}\label{sec:heisenberg-algebra}

Let $X=\CC^2$ with the linear coordinate system $(z,\xi)$. Two
dimensional torus $T$ acts on $X$ by $(t_1,t_2)\cdot (z,\xi) = (t_1z,
t_2 \xi)$.

Let $\Hilb{n}$ denote the Hilbert scheme of $n$ points in $X$. Let
$\pi\colon\Hilb{n}\to S^nX$ be the Hilbert-Chow morphism. We have
induced $T$-actions on $\Hilb{n}$ and $S^nX$ so that $\pi$ is
equivariant.

Recall (\cite[(8.9)]{Lecture}) that we considered a subvariety
$P[i]\subset\bigsqcup_n \Hilb{n}\times\Hilb{n-i}\times X$ defined by
\begin{equation}\label{eq:6}
  P[i] \defeq \big\{ (I_1,I_2, x) \mid
    I_1\subset I_2, \Supp(I_2/I_1) = \{ x\} \big\}
\end{equation}
for $i > 0$. Let us omit $\bigsqcup_n$ for brevity hereafter.

The projections $q_1\colon P[i]\to \Hilb{n}$, $q_2\colon P[i]\to
\Hilb{n-i}\times X$ are proper. Therefore convolution operators
\begin{equation}\label{eq:4}
  \begin{split}
  &H^{T,lf}_*(\Hilb{n-i}\times X)\to 
  H^{T,lf}_*(\Hilb{n}); \bullet\mapsto 
  q_{1*}(q_2^*(\bullet)\cap [P[i]]),
\\
  &H^{T,lf}_*(\Hilb{n})\to 
  H^{T,lf}_*(\Hilb{n-i}\times X); \bullet\mapsto 
  (-1)^i q_{2*}(q_1^*(\bullet)\cap [P[i]])
  \end{split}
\end{equation}
are well-defined. The sign $(-1)^i$ is introduced so that the
commutation relation below is simplified. We take the direct sum of
homology groups over $n$ later, keeping the same notation.

Take an equivariant class $\beta\in H^{T,lf}_*(X)$. We replace
$\bullet$ by $\bullet\otimes\beta\in H^{T,lf}_*(\Hilb{n-i}\times
X)\cong H^{T,lf}_*(\Hilb{n-i})\otimes H^{T,lf}_*(X)$ in the first
operator in \eqref{eq:4}. It is considered as an operator
$H^{T,lf}_*(\Hilb{n-i})\to H^{T,lf}_*(\Hilb{n})$. Let us denote it by
$\PP{\beta}{-i}$. It was denoted by $P_\beta[-i]$ in \cite{Lecture}.

On the other hand, for the second operator in \eqref{eq:4}, we take
the intersection pairing with $\alpha\in H^{T}_*(X)$ via
$H^{T,lf}_*(\Hilb{n-i}\times X)\cong H^{T,lf}_*(\Hilb{n-i})\otimes
H^{T,lf}_*(X)$. We obtain an operator $H^{T,lf}_*(\Hilb{n})\to
H^{T,lf}_*(\Hilb{n-i})$, which is denoted by $\PP{\alpha}{i}$.

These operators $\PP{\alpha}{i}$, $\PP{\beta}{-i}$ are the same as ones in
\cite[Ch.8]{Lecture} though the current explanation is slightly
different.

We can replace $H^T_*$ by the equivariant Borel-Moore homology group
$H^{T,lf}_*$ in \eqref{eq:4}. Then
\begin{equation}
    \begin{split}
      & \PP{\alpha}{i}\colon H^{T,lf}_*(\Hilb{n})\to H^{T,lf}_*(\Hilb{n-i}),\\
      & \PP{\beta}{-i}\colon H^{T,lf}_*(\Hilb{n-i})\to H^{T,lf}_*(\Hilb{n})
    \end{split}
\end{equation}
are well-defined for $\alpha\in H^T_*(X)$, $\beta\in
H^{T,lf}_*(X)$. Note that homology groups containing $\alpha$, $\beta$
are swapped from the above case.

We have a perfect pairing
\begin{equation}\label{eq:14}
  \la\ ,\ \ra\colon 
  H^{T,lf}_*(\Hilb{n})\otimes H^T_*(\Hilb{n}) \to H^T_*(\mathrm{pt});
  c\otimes c'\mapsto (-1)^n a_*(c\cap c')
\end{equation}
by the Poincar\'e duality. Here $a\colon \Hilb{n}\to \mathrm{pt}$. The
transpose of $\PP{\alpha}{i}$ is equal to $\PP{\alpha}{-i}$.

The operators are linear over $H^*_T(\mathrm{pt})$ from the
construction:
\(
   \PP{\alpha}{i} f = f \PP{\alpha}{i},
\)
etc.\ for $f\in H^*_T(\mathrm{pt})$. It is $H^*_T(\mathrm{pt})$-linear
on $\alpha$:
\(
   \PP{f\alpha}{i} = f\PP{\alpha}{i},
\)
etc.

Note that $H_*^T(X) \cong H^*_T(\mathrm{pt})[0]$, $H_*^{T,lf}(X) \cong
H^*_T(\mathrm{pt})[X]$. Therefore it is enough to consider the cases
$\alpha$, $\beta= [0]$ and $[X]$.

Then the following commutation relation of the Heisenberg algebra
holds:
\begin{equation}\label{eq:5}
  \left[ \PP{\alpha}{i}, \PP{\beta}{j}\right]
  = i\delta_{i+j,0} \la\alpha,\beta\ra\operatorname{id}.
\end{equation}
When the right hand side is nonzero, i.e., $i+j=0$, one of $\alpha$ or
$\beta$ is in $H^T_*(X)$ and the other is in
$H^{T,lf}_*(X)$. Therefore 
\begin{equation}\label{eq:18}
  \la \alpha,\beta\ra = - a_{X*}(\alpha\cap\beta) \in H^*_T(\mathrm{pt})
\end{equation}
is well-defined, where $a_X\colon X\to\mathrm{pt}$.
We take the same sign convention as above, understanding $\Hilb{1} =
X$, i.e., it is $(-1)$ times the intersection pairing.

The proof of \eqref{eq:5} for nonequivariant homology groups in
\cite[Ch.8]{Lecture} works also for the equivariant case.
Let us briefly explain checkpoints.
When $i+j\neq 0$, the right hand side of \eqref{eq:5} vanishes. In
these cases, we study set-theoretical intersections of cycles, check
that they are transversal intersections generically and non-generic
parts do not contribute to the computation by dimension reason. The
last reasoning works in the equivariant case thanks to \eqref{eq:23}.

When $i + j = 0$, the argument in \cite{Lecture} works before the
sentence ``If $\deg\alpha+\deg\beta < 4$, then $\alpha\cap\beta = 0$''
in the middle of p.102. This is not true for the equivariant case, as
$H^T_{<0}(X)$ is nonzero. So let us go back a little and see what is
actually proved there.
We consider
\begin{equation}
  L \defeq \left\{
    (\idl_1,\idl_3,x)\in \Hilb{n}\times\Hilb{n}\times X\, \middle|\,
    \text{$\idl_1 = \idl_3$ outside $x$}
    \right\}.
\end{equation}
Then $L$ has $n$ irreducible components $L_1$, \dots, $L_n$ with
$\dim_\CC L_i = 2n$, and $(2n+2)$-dimensional irreducible component
$\Delta_{\Hilb{n}}\times X$, and other irreducible components (if
exists) have lower dimension (see \cite[Lemma~8.32]{Lecture}). We have
the projection $\Pi''\colon L\to X$ to the third factor. Then by the
same argument as the nonequivariant case, we show that there is a
class $\mathscr R$ in $H_{4n+4}^{T,lf}(L)$ (which is $p_{134*}\iota''_*
R'' - p_{134*}\iota'''_* R'''$ in the notation \cite[p.101]{Lecture})
such that the left hand side of \eqref{eq:5} is given by the
correspondence 
\begin{equation}
  p_{13*}(\Pi^{\prime\prime*}(\alpha\cap\beta)\cap\mathscr R)
  \in H_{4n + \deg\alpha+\deg\beta-4}^{T,lf}(\Hilb{n}\times_{S^nX}\Hilb{n}),
\end{equation}
where $p_{13}\colon L\to \Hilb{n}\times_{S^nX}\Hilb{n}$ is the
projection $(\idl_1,\idl_3,x)\mapsto (\idl_1,\idl_3)$.
By dimension reason, we see that $\mathscr R$ comes from a class in
$H^{T,lf}_{4n+4}(\Delta_{\Hilb{n}}\times X)$. Since it is of top
degree, it must be $c_{i,n}[\Delta_{\Hilb{n}}\times X]$, where the
multiple constant $c_{i,n}$ is independent of equivariant variables, i.e.,
a complex number.
Therefore
\begin{equation}
  p_{13*}(\Pi^{\prime\prime*}(\alpha\cap\beta)\cap\mathscr R)
  = c_{i,n} \la \alpha,\beta\ra [\Delta_{\Hilb{n}}].
\end{equation}

Now the only task is to determine $c_{i,n}$. Since $c_{i,n}$ is a
complex number, we can take nonequivariant limit: it is the same for
equivariant and nonequivariant cases. Therefore we conclude $c_{i,n} =
i$.

\subsection{Torus fixed points}\label{sec:torus}

For a later purpose, let us recall fixed points with respect to the
two dimensional torus $T$. They are parametrized by partitions
$\lambda$ of $n$, or equivalently Young diagrams $D$ with $n$
boxes. (See \cite[Ch.~5 and 6]{Lecture}.) We keep our convention in
\cite{Lecture}: for $\lambda = (\lambda_1,\lambda_2,\dots)$, the
corresponding torus fixed point is the monomial ideal
\begin{equation}
  I_\lambda 
  = (\xi^{\lambda_1}, z\xi^{\lambda_2},\dots, z^{i-1}\xi^{\lambda_i},\dots).
\end{equation}
If we put the monomial $z^{i-1} \xi^{j-1}$ in the box at the
intersection of the $i^{\mathrm{th}}$ column and the $j^{\mathrm{th}}$
row, $I_\lambda$ is generated by monomials which sit outside of
$D$. See Figure~\ref{fig:Young}.

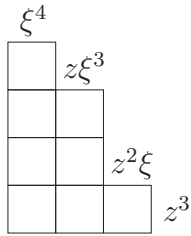
\begin{figure}[htbp]
\begin{center}
\leavevmode
\setlength{\unitlength}{0.01250000in}
\begin{picture}(73,114)(0,-10)
\path(22,62)(22,82)(2,82)
        (2,62)(22,62)
\path(22,42)(22,62)(2,62)
        (2,42)(22,42)
\path(42,42)(42,62)(22,62)
        (22,42)(42,42)
\path(22,22)(22,42)(2,42)
        (2,22)(22,22)
\path(42,22)(42,42)(22,42)
        (22,22)(42,22)
\path(22,2)(22,22)(2,22)
        (2,2)(22,2)
\path(42,2)(42,22)(22,22)
        (22,2)(42,2)
\path(62,2)(62,22)(42,22)
        (42,2)(62,2)
\put(7,87){\makebox(0,0)[lb]{\smash{$\xi^4$}}}
\put(25,67){\makebox(0,0)[lb]{\smash{$z\xi^3$}}}
\put(45,27){\makebox(0,0)[lb]{\smash{$z^2\xi$}}}
\put(67,7){\makebox(0,0)[lb]{\smash{$z^3$}}}
\end{picture}
\caption{Young diagram and an ideal}
\label{fig:Young}
\end{center}
\end{figure}

\subsection{Symmetric products of the $x$-axis}\label{sec:symmetric-products-x}

The constant $c_{i,n}$ (in fact, it is easy to see that it is
independent of $n$ by a similar argument as above) was determined by
studying symmetric products of an embedded curve $C$ in $X$ in the
nonequivariant case \cite[Ch.9]{Lecture}. We shall briefly review the
study here, emphasizing that the argument goes through even when
$C$ and $X$ are noncompact. We shall use the construction later.

Let $C$ denote the $z$-axis, i.e., $C = \{ \xi =0\}$. Let $L^*C$
denote the subvariety consisting of $I\in\bigsqcup_n \Hilb{n}$ such
that $\CC[z,\xi]/I$ is contained in $C$. In other words, it is the
union $\bigsqcup_n \pi^{-1}(S^nC)$ of inverse images of the symmetric
products $S^n C$ under the Hilbert-Chow morphism.

By \cite[Ch.7]{Lecture} it has irreducible components $L^\lambda C$
parametrized by partitions $\lambda$. Moreover, it is a lagrangian
subvariety in $\Hilb{n}$ with $n=|\lambda|$. We recall two
descriptions of irreducible component $L^\lambda C$.

The first one is by the $\CC^*$-action given by $t\cdot (z,\xi)\mapsto
(z,t\xi)$. Then $I\in \CC^*$ is fixed by $\CC^*$ if and only if it is
of a form
\begin{equation}
  I = I_{D_1,z_1}\cap \cdots\cap I_{D_k,z_k},
\end{equation}
where $z_k\in C$, $D_i$ is a Young diagram and $I_{D_i,z_i}$ is the
ideal generated by monomials in $(z-z_i)$ and $\xi$ which sit outside
of $D_i$ as in \subsecref{sec:torus}. The diagram $D_i$ and the point
$z_i$ is uniquely determined by $I$. Let $D$ be the union of all
$D_i$, reordering columns so that it is a Young diagram. Let $\lambda$
be the corresponding partition. Then $S^\lambda C$ consisting of fixed
points $I$ with $D = D_1\sqcup D_2\sqcup\cdots$ is a connected
component of $(\Hilb{n})^{\CC^*}$, and any connected component is of
this form.

We have
\begin{equation}
  L^* C = \bigsqcup_n 
  \left\{ I\in\Hilb{n} \, \middle|\, \text{$\lim_{t\to\infty} t\cdot I$
      exists} \right\},
\end{equation}
which decomposes according to the limit as $L^*C = \bigsqcup
W_\lambda^-$ with
\begin{equation}\label{eq:11}
  W_\lambda^- \defeq \left\{ I\in L^* C \,\middle|\, 
    \lim_{t\to\infty} t\cdot I \in S^\lambda C \right\}.
\end{equation}
Each $W_\lambda^-$ is a locally closed lagrangian subvariety in
$\Hilb{n}$ and we define 
\begin{equation}
  L^\lambda C \defeq \text{Closure of }W_\lambda^-.  
\end{equation}

For example, $\lambda = (1^n)$, $L^\lambda C$ is isomorphic to the
symmetric product $S^nC$ of $C$, embedded into $\Hilb{n}$ via the
natural morphism $S^nC \cong \Hilb[C]{n}\to \Hilb{n}$. This is clearly
a lagrangian subvariety in $\Hilb{n}$ since the symplectic form is an
extension of one on the open subset $\pi^{-1}(S^n_{(1^n)}X)$. The
opposite extreme is $L^{(n)}C$, which consists of ideals $I$ such that
$\CC[z,\xi]/I$ is supported at a single point in $C$.
The lagrangian property is less clear, and follows from the Morse
theoretic argument in \cite[Ch.7]{Lecture}.

The second description is by the Hilbert-Chow morphism $\pi\colon
\Hilb{n}\to S^n X$. Recall we have $L^* C = \pi^{-1}(S^n C)$. Note
that $S^n C$ has the natural stratification $S^n C = \bigsqcup
S^n_\lambda C$, where
\begin{equation}
  S^n_\lambda C = \left\{ \sum_i \lambda_i [x_i]\in S^n C\, \middle|\,
    \text{$x_i\neq x_j$ for $i\neq j$}\right\}.
\end{equation}
We decompose $L^*C$ accordingly
\begin{equation}
  L^*C = \bigsqcup \pi^{-1}(S^n_\lambda C).
\end{equation}
Each $\pi^{-1}(S^n_\lambda C)$ is a locally closed $n$-dimensional
irreducible subvariety in $L^* C(\cap\Hilb{n})$, where the
irreducibility follows from that of punctual Hilbert schemes
$\pi^{-1}(m[p])$ (\cite[Th.~5.12]{Lecture}). Hence its closure is an
irreducible component.

\begin{Proposition}
  $L^\lambda C$ is the closure of $\pi^{-1}(S^n_\lambda C)$.
\end{Proposition}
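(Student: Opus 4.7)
Plan.

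The plan is to exhibit an open dense subset of $\pi^{-1}(S^n_\lambda C)$ contained in $W_\lambda^-$. Since both $\overline{\pi^{-1}(S^n_\lambda C)}$ and $L^\lambda C$ are $n$-dimensional irreducible components of $L^*C\cap\Hilb{n}$ (as explicitly recorded in the text above, and as $L^\lambda C$ is the closure of the Bia{\l}ynicki--Birula cell $W_\lambda^-$ attached to the irreducible fixed component $S^\lambda C$), such an inclusion on a dense open subset forces the equality $\overline{\pi^{-1}(S^n_\lambda C)} = L^\lambda C$.

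For the explicit family, given pairwise distinct points $z_1, \dots, z_{\ell(\lambda)} \in C$ and polynomials
\[
 f_i(\xi) = \alpha_{i,1}\xi + \alpha_{i,2}\xi^2 + \cdots + \alpha_{i,\lambda_i-1}\xi^{\lambda_i-1},
\]
set $I \defeq \bigcap_{i=1}^{\ell(\lambda)} \bigl(\xi^{\lambda_i},\, z - z_i + f_i(\xi)\bigr)$. The $i$-th factor is a colength-$\lambda_i$ ideal supported at $(z_i, 0)\in C$ with local quotient $\CC[\xi]/(\xi^{\lambda_i})$ having basis $\{1, \xi, \dots, \xi^{\lambda_i-1}\}$; conversely any colength-$\lambda_i$ ideal at $(z_i, 0)$ for which these elements remain a basis of the quotient must have this form. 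This is a Zariski-open condition on the punctual Hilbert scheme $\pi^{-1}(\lambda_i\cdot(z_i, 0))$, which is irreducible of dimension $\lambda_i-1$ by Theorem~5.12 of \cite{Lecture}, so the described locus is open and dense there. Since the factors are supported at disjoint points of $C$, varying $(z_i)$ and $(\alpha_{i,k})$ parametrizes an open dense subset $U \subseteq \pi^{-1}(S^n_\lambda C)$ of dimension $\ell(\lambda) + \sum_i(\lambda_i - 1) = n$.

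Under the $\CC^*$-action $(t\cdot f)(z,\xi) = f(z, t^{-1}\xi)$, each factor evolves as $(\xi^{\lambda_i},\, z - z_i + f_i(t^{-1}\xi))$, converging to $(\xi^{\lambda_i},\, z - z_i) = I_{(\lambda_i),\, z_i}$ as $t\to\infty$ because $f_i(t^{-1}\xi) \to 0$. Hence $\lim_{t\to\infty} t\cdot I$ is the $\CC^*$-fixed ideal whose local Young diagram at each $z_i$ is $D_i = (\lambda_i)$, a single column of height $\lambda_i$; the column union of these diagrams, reordered by decreasing height, is precisely the partition $\lambda$, so the limit lies in $S^\lambda C$ and $U \subseteq W_\lambda^-$. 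The main delicate point is the verification that $U$ is genuinely open and dense inside $\pi^{-1}(S^n_\lambda C)$, which rests on the cited irreducibility of the punctual Hilbert schemes together with the dimension bookkeeping above; the evolution formula, the column-union identification, and the containment $I \in L^*C$ (clear as the set-theoretic zero locus of $I$ is $\{(z_i, 0)\}_i \subset C$) are then all direct calculations.
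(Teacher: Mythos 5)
Your argument is correct, but it runs in the mirror direction to the paper's and is more computational. The paper's one-line proof works from the fixed-point side: the locus of $S^\lambda C$ where all local diagrams $D_i$ are single columns is open in that component, and since the Hilbert--Chow image is constant along the flow $t\cdot(z,\xi)=(z,t\xi)$ (the support lies on $C$, which is fixed pointwise, and $\pi$ is equivariant and proper), the preimage of this locus under the limit map is a dense open subset of $W^-_\lambda$ contained in $\pi^{-1}(S^n_\lambda C)$; one then concludes exactly as in your final step, both closures being $n$-dimensional irreducible components of $L^*C\cap\Hilb{n}$. You instead produce a dense open subset of the stratum $\pi^{-1}(S^n_\lambda C)$, namely the curvilinear-type ideals $\bigcap_i(\xi^{\lambda_i},\,z-z_i+f_i(\xi))$, and flow it into $S^\lambda C$, invoking Brian\c{c}on's irreducibility (Theorem~5.12 of the lecture notes) to get density of this normal form in the punctual fibers; your limit computation is legitimate because the extended family over $s=t^{-1}\in\CC$ has constant colength, hence is flat, so the naive limit of generators is the flat limit (worth saying explicitly). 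Both proofs rest on the same structural inputs --- the Bia{\l}ynicki--Birula description of $L^\lambda C$ as $\overline{W^-_\lambda}$, single-column fixed points, and irreducibility plus dimension count on both sides --- but the paper's direction avoids the normal-form analysis, while yours has the side benefit of exhibiting an explicit dense open chart of $L^\lambda C$.
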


\begin{proof}
  It is enough to note that the above component $S^\lambda C$ of
  $(\Hilb{n})^{\CC^*}$ has an open locus consisting of $I =
  I_{D_1,z_1}\cap\cdots \cap I_{D_k,z_k}$ such that all $D_i$'s have
  only single column.
\end{proof}

There is the third description, which was not given in \cite{Lecture}:
Let us consider the {\it hyperbolic\/} $\CC^*$-action $t\ast(z,\xi) =
(t^{-1}z, t\xi)$. Then we claim that
\(
   \idl\in \bigsqcup_n \Hilb{n}
\)
has limit when $t\to \infty$ if and only if $\idl\in L^*C$. If we
replace $\Hilb{n}$ by the symmetric product $S^nX$, the corresponding
assertion is obvious. A point $\sigma\in S^nX$ has limit when $t\to
\infty$ if and only if $\sigma\in S^n C$. Since $\pi$ is proper, the
assertion follows. Then $L^*C$ decomposes according to the
decomposition of $(\Hilb{n})^{\CC^*}$ as in \eqref{eq:11}, where the
$\CC^*$-action is replaced by the hyperbolic one.
Let us first note that $(\Hilb{n})^{\CC^*} = (\Hilb{n})^T$. This
follows, for example, from the $T$-character formula of the tangent
space at $I_\lambda\in(\Hilb{n})^T$ reviewed in \propref{prop:char}
below. Put $t_1 = t_2^{-1} = t$. We see that $T_{I_\lambda}\Hilb{n}$
has a trivial weight zero space. It means that $(\Hilb{n})^{\CC^*}$
cannot be larger than $(\Hilb{n})^T$.

\begin{Proposition}
\begin{equation*}
  W^-_\lambda = \left\{ I\in L^*C \, \middle|\,
      \lim_{t\to\infty} t\ast I = I_\lambda\right\}.
\end{equation*}
\end{Proposition}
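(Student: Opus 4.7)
The plan is to establish both inclusions. Both $L^*C=\bigsqcup_\lambda W^-_\lambda$ (as in \eqref{eq:11}) and $L^*C=\bigsqcup_\mu \{I:\lim_{t\to\infty} t\ast I = I_\mu\}$ are disjoint decompositions of $L^*C$ into locally closed subsets indexed by the same set of partitions (the second decomposition existing since hyperbolic limits exist on all of $L^*C$ and necessarily land in $(\Hilb{n})^T=\{I_\mu\}$). So it suffices to prove one containment, say $W^-_\lambda \subseteq \{I:\lim_{t\to\infty} t\ast I = I_\lambda\}$.

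The key step is to factor the hyperbolic action. Let $\rho\colon\CC^*\to T$, $s\mapsto(s,1)$, be the one-parameter subgroup with $\rho(s)\cdot(z,\xi)=(sz,\xi)$. Since $T$ is abelian, $t\ast = \rho(t^{-1})\circ(t\cdot)$. Given $I\in W^-_\lambda$, set $I_0 := \lim_{t\to\infty} t\cdot I = I_{D_1,z_1}\cap\cdots\cap I_{D_k,z_k} \in S^\lambda C$. The aim is to show
\[
  \lim_{t\to\infty} t\ast I \;=\; \lim_{s\to 0}\,\rho(s)\cdot I_0 \;=\; I_\lambda.
\]
The first equality is a continuity statement about extending the orbit map $T\to\Hilb{n}$, $\mathbf{t}\mapsto\mathbf{t}\cdot I$, to the corner $(0,\infty)$ of the toric completion $(\CP^1)^2\supset T$; existence of the required extension follows (just as in the author's earlier argument for the hyperbolic action) from properness of $\pi$ together with the obvious limits in $S^nX$. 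The path $t\mapsto(t^{-1},t)$ approaches that corner, as does first applying $(t\cdot)$ with $t\to\infty$ (landing at $I_0$) and then $\rho(s)$ with $s\to 0$, so both iterated limits agree with the value at the corner.

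For the second equality, note $\rho(s)\cdot I_0 = I_{D_1,sz_1}\cap\cdots\cap I_{D_k,sz_k}$ has image $\sum_i|D_i|[sz_i]$ in $S^nX$, which tends to $n[0]$ as $s\to 0$; properness of $\pi$ again produces a limit $J$. Since $\rho$ commutes with $t\cdot$ and $I_0$ is $t\cdot$-fixed, every $\rho(s)\cdot I_0$ is $t\cdot$-fixed, hence so is $J$; being also $\rho$-invariant, $J\in(\Hilb{n})^T$, so $J=I_\mu$ for some $\mu\vdash n$. To identify $\mu$, observe that $\{\rho(s)\cdot I_0\}_{s\in\CC}$ is a $t\cdot$-equivariant flat family, so the $t\cdot$-character of $\CC[z,\xi]/\rho(s)\cdot I_0$ is independent of $s$. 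Matching the $\xi$-graded character at $s=1$ and $s=0$, and recalling that the column lengths of $\lambda$ are by definition the multiset of column lengths of $D_1,\ldots,D_k$, one reads off $\mu=\lambda$.

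The main obstacle is the first equality above: justifying that the limit along the diagonal $t\mapsto(t^{-1},t)$ in $T$ may be computed as the iterated limit ``first $t\cdot$, then $\rho(t^{-1})$''. This reduces to extending the orbit map continuously to a neighborhood of the corner $(0,\infty)$ in $(\CP^1)^2$, which follows from properness of the Hilbert--Chow morphism together with the evident behavior in the symmetric product, but has to be set up carefully.
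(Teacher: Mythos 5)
Your reduction to a single inclusion via the two decompositions of $L^*C$ is fine, and the identification $\lim_{s\to 0}\rho(s)\cdot I_0=I_\lambda$ through the $\xi$-graded character of a flat $\CC^*$-equivariant family is sound. The genuine gap is precisely the step you flag: the equality $\lim_{t\to\infty}t\ast I=\lim_{s\to 0}\rho(s)\cdot\bigl(\lim_{t\to\infty}t\cdot I\bigr)$. Properness of $\pi$ together with the convergence of the cycle in $S^nX$ only keeps the family $(t_1,t_2)\cdot I$ in a compact set near the corner, hence guarantees limit points along every path; it does not give path-independence, i.e.\ it does not extend the orbit map continuously to the corner $(0,\infty)$ of $(\CP^1)^2$. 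A torus orbit map in general does not extend to a prescribed toric compactification: the limit along the one-parameter subgroup $t\mapsto(t^{-a},t^{b})$, $a,b>0$, is an initial-ideal degeneration of $I$, and single-valuedness at the corner amounts to saying that no wall of the degeneration (Gr\"obner) fan of $I$ crosses the open cone of such directions. Moreover, your iterated limit (``first the $\xi$-scaling, then $\rho$'') coincides a priori with the limit along a direction close to the boundary ray, not with the diagonal hyperbolic direction; identifying the two again requires the absence of a wall in between. Ruling this out for $I\in L^*C$ is essentially the content of the proposition, so as written the argument assumes its crux; ``has to be set up carefully'' conceals the whole difficulty rather than a routine verification.

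For comparison, the paper's proof avoids any interchange of limits: both $W^-_\lambda$ and the hyperbolic attracting set of $I_\lambda$ are $T$-invariant locally closed submanifolds through $I_\lambda$, their tangent spaces at $I_\lambda$ coincide (both correspond to $\sum_s t_1^{l(s)+1}t_2^{-a(s)}$ by \propref{prop:char}), and a $T$-invariant submanifold with this prescribed tangent space is unique, as in the construction of the Bia{\l}ynicki-Birula decomposition; equality near $I_\lambda$ then gives the statement. If you wish to salvage your two-step degeneration, you must actually prove that the limit of $I$ is independent of the choice of one-parameter subgroup in the open quadrant --- for instance by noting that no tangent weight $t_1^{l(s)+1}t_2^{-a(s)}$ or $t_1^{-l(s)}t_2^{a(s)+1}$ at any fixed point changes sign there and then invoking a rigidity statement of the above type --- which in effect reproduces the paper's argument rather than bypassing it.
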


\begin{proof}
  Since both left and right hand sides are $T$-invariant locally
  closed submanifolds, it is enough to prove the equality in a
  neighborhood of the fixed point $I_\lambda$. 

  The tangent space of $W^-_\lambda$ at $I_\lambda$ is equal to the
  sum of nonpositive weight spaces in $T_{I_\lambda}\Hilb{n}$ with
  respect to the $\CC^*$-action given by $t\cdot (z,\xi) =
  (z,t\xi)$. Similarly, the tangent space of the right hand side is
  the sum of negative weight spaces with respect to the hyperbolic
  $\CC^*$-action $t\ast (z,\xi) = (t^{-1}z, t\xi)$. Looking at the
  formula in \propref{prop:char} below, one finds that both spaces are
  the same space. In fact, it corresponds to $\sum t_1^{l(s)+1}
  t_2^{-a(s)}$. (One can also check that it is a lagrangian subspace
  in $T_{I_\lambda}\Hilb{n}$, as the symplectic form is of weight $t_1
  t_2$.)

  There is no nontrivial $T$-homomorphism from any symmetric power of
  $T_{I_\lambda} W^-_\lambda$ to the sum of complementary weight
  spaces, and hence a $T$-invariant submanifold with the tangent space
  $T_{I_\lambda} W^-_\lambda$ is unique, as in the proof of the
  existence of Bia{\l}ynicki-Birula decomposition (see
  \cite[Th.~2.2]{BB}).
  \begin{NB}
    We take a $T$-equivariant chart around $I_\lambda$.
  \end{NB}%
  Therefore two submanifolds are equal.
\end{proof}

Let us consider the top degree Borel-Moore homology group of $L^*C$:
\begin{equation}
  H_{\operatorname{top}}^{lf}(L^*C)
  = \bigoplus_n H_{2n}^{lf}(L^*C\cap\Hilb{n}).
\end{equation}
This is a vector space with a base $\{ [L^\lambda C]\}$.
The creation operator $\PP{\beta}{-i}$ acts on
$H_{\operatorname{top}}^{lf}(L^*C)$ for $\beta = [C]\in
H_{2}^{lf}(C)$. On the other hand, for the annihilation operator
$\PP{\alpha}{i}$, we take $\alpha = [\text{$y$-axis}]$. Then the
intersection pairing of $\alpha$ and $\beta$ is well-defined (in fact,
it is $1$), as $\{\text{$x$-axis}\}\cap\{\text{$y$-axis}\}$ is a
single point in $X$, and $\PP{\alpha}{i}$ is a well-defined operator on
$H_{\operatorname{top}}^{lf}(L^*C)$. The top degree is preserved by
the convolution product, as $P[i]$ in \eqref{eq:6} is middle
dimensional in $\Hilb{n}\times\Hilb{n-i}\times X$, and $\alpha$,
$\beta$ are so in $X$.

We have a linear map from the ring $\Lambda$ of symmetric functions to
$H_{\operatorname{top}}^{lf}(L^*C)$ by
\begin{equation}
  p_\lambda = p_{\lambda_1} p_{\lambda_2}\cdots
  \mapsto \PP{\beta}{-\lambda_1} \PP{\beta}{-\lambda_2}\cdots 1,
\end{equation}
for $\lambda = (\lambda_1,\lambda_2,\dots)$. From the Heisenberg
algebra relation \eqref{eq:5}, it is injective. (For this we only need
that the constant $c_{i,n}$ is nonzero, which is a consequence of the
Poincar\'e duality.) Since $\dim H_{\operatorname{top}}^{lf}(L^*C\cap
\Hilb{n})$ is equal to the number of partitions of $n$, it is an
isomorphism.

In order to determine the coefficient $i$ in \eqref{eq:5}, it is
enough to compute it in the current situation. A key result is
\begin{Proposition}\label{prop:monomial}
  Under the isomorphism $\Lambda\cong
  H_{\operatorname{top}}^{lf}(L^*C)$, the class $[L^\lambda C]$
  corresponds to the monomial symmetric function $m_\lambda$.
\end{Proposition}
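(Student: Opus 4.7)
The plan is to identify $[L^\lambda C]$ with $m_\lambda$ by showing both satisfy the same recursion under the ``creation'' operation on each side. Concretely, I would compare the action of $\PP{\beta}{-r}$ on the basis $\{[L^\mu C]\}$ of $H_{\operatorname{top}}^{lf}(L^*C)$ with the action of multiplication by $p_r = m_{(r)}$ on the basis $\{m_\mu\}$ of $\Lambda$. The two bases $\{p_\lambda\}$ and $\{m_\lambda\}$ of $\Lambda$ are related by an invertible triangular matrix in the coarsening partial order on partitions (with $p_{(1,1)} = 2m_{(1,1)}+m_{(2)}$ as a simple illustration that the diagonal entries are nontrivial), so if the two creation actions agree coefficient-by-coefficient, then starting from the common base $[L^\emptyset C] = 1 = m_\emptyset$ and iterating forces $m_\lambda \leftrightarrow [L^\lambda C]$.

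The heart of the proof is to show that $\PP{\beta}{-r} \cdot [L^\mu C]$ expands as a linear combination of $[L^\nu C]$'s, where $\nu$ is either $\mu \cup (r)$ (a new part of size $r$ is appended) or a partition obtained from $\mu$ by enlarging one distinct part $\mu_j$ to $\mu_j + r$. The coefficient of $[L^\nu C]$ equals the number of parts of $\nu$ of size $r$ in the first case, and the number of parts of $\nu$ of size $\mu_j + r$ in the second. To establish this, I would unwind the convolution in \eqref{eq:4} and analyze the projection $(\idl_1, \idl_2, x) \mapsto \idl_1$ from the correspondence $\{(\idl_1, \idl_2, x) : \idl_2 \subset \idl_1,\ x \in C,\ \Supp(\idl_1/\idl_2) = \{x\}\}$ to $L^\nu C$. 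Over the open stratum $\pi^{-1}(S^n_\nu C)$, where a generic $\idl_1$ has disjoint clusters at distinct points of $C$ of sizes $\nu_1, \ldots, \nu_{\ell(\nu)}$, the fiber of this projection parametrizes the choices of which cluster of $\idl_1$ supports the length-$r$ quotient $\idl_1/\idl_2$, and counting those choices yields the stated multiplicities. A dimension count using \eqref{eq:23}, combined with the irreducibility of punctual Hilbert schemes (\cite[Th.~5.12]{Lecture}), rules out contributions from non-generic configurations of $\idl_2$ or from other irreducible components of the correspondence.

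The same coefficients appear on the symmetric-function side by a direct expansion of $p_r \cdot m_\mu$ in the monomial basis: writing $p_r = \sum_k x_k^r$ and $m_\mu = \sum_\alpha x^\alpha$ (over distinct permutations $\alpha$ of $\mu$), each term $x_k^r x^\alpha$ either introduces a new index (contributing to $m_{\mu \cup (r)}$) or collides with an existing index of value $\mu_j$ (contributing to the merging partition), and the resulting multiplicities match those of the geometric computation. Combined with the base case and induction on $|\lambda|$, the two recursions coincide, and triangular inversion then forces $m_\lambda \leftrightarrow [L^\lambda C]$. The main obstacle is the transversality analysis in the geometric computation: I must check that the projection $(\idl_1, \idl_2, x) \mapsto \idl_1$ is generically \'etale over each $L^\nu C$ with the expected degree, which needs a local description of the punctual Hilbert scheme at a generic ideal and precise control of how length-$r$ subquotients sit inside a cluster of length $\nu_j$.
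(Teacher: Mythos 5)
The paper itself omits the proof of this proposition (it is quoted from \cite[Ch.~9]{Lecture}), and your plan follows essentially the same geometric route as that reference: expand $\PP{[C]}{-r}[L^\mu C]$ in the basis $\{[L^\nu C]\}$ of the top-degree group, match the structure constants with those of multiplication by $p_r$ on the monomial basis, and propagate from the vacuum. Your bookkeeping is correct: over a generic $I_1\in L^\nu C$, whose clusters are curvilinear of lengths $\nu_1,\dots,\nu_{\ell}$ at distinct points of $C$, the fiber of the correspondence is finite (a curvilinear scheme contains a unique subscheme of each length), and the set-theoretic count is $m_r(\nu)$ in the appending case and $m_{\mu_j+r}(\nu)$ in the merging case, which are exactly the coefficients of $m_\nu$ in $p_r m_\mu$; the concluding induction (agreement on the spanning set $\{p_\lambda\}$ forces the isomorphism to send $m_\lambda$ to $[L^\lambda C]$) is also sound. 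A minor slip: with the paper's conventions the correspondence \eqref{eq:6} has $I_1\subset I_2$ as ideals with $I_2/I_1$ of length $r$; you wrote the inclusion the other way.

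The genuine gap is the step you explicitly defer: that each point of the finite fiber contributes with intersection multiplicity one. This is not a routine afterthought but the actual content of the proof, and your proposal does not contain it. Two things make it delicate. First, $P[r]$ is singular for $r\ge 2$ (the paper notes only $P[1]$ is smooth), so you cannot argue transversality against a globally smooth correspondence; you must restrict to the open locus of $P[r]$ where the punctual quotient is curvilinear, check smoothness of $P[r]$ there and of $L^\mu C$, $L^\nu C$ at the relevant points, and then carry out a tangent-space computation showing that the intersection computing $q_{1*}(q_2^*([L^\mu C]\otimes[C])\cap[P[r]])$ is transverse along the component dominating $L^\nu C$. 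Second, the merging case is the hard one: the new length-$r$ piece is attached at a point already in the support of $I_2$, so the local model is the punctual Hilbert scheme at a curvilinear ideal rather than two disjoint clusters, and ``generically \'etale of the expected degree'' there requires an explicit local description — this is precisely what \cite[Ch.~9]{Lecture} supplies. You should also make the dimension estimate explicit: every irreducible component of $q_1^{-1}(L^*C)\cap q_2^{-1}(L^\mu C\times C)$ must have dimension at most $n$, with the $n$-dimensional ones exactly those analyzed above, before \eqref{eq:23} lets you discard degenerate configurations. Until the multiplicity-one statement is proved, the coefficients, and hence the identification of $[L^\lambda C]$ with $m_\lambda$, are not established.
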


This result together with the formula
\begin{equation}
  \PP{\alpha}{i} [L^{(1^n)}C] = [L^{(1^{n-i})}C]
\end{equation}
(see \cite[Lemma~9.21]{Lecture}) determines the coefficient.

\begin{NB}
  We may write yet another characterization as unstable manifolds with
  respect to the hyperbolic action $t\cdot(z,\xi) = (tz,t^{-1}\xi)$
\end{NB}

\section{Jack symmetric functions and torus fixed
  points}\label{sec:Jack}

The goal of this section is to give a geometric realization of Jack
symmetric functions in the $T$-equivariant homology groups of Hilbert
schemes of points in $X=\CC^2$. 

The result presented here was stated and proved by Li-Qin-Wang
\cite{LQW-jack}. Both the framework and the proof were based on an
earlier work by Vasserot \cite{VasserotC2}, who considered Schur
functions and the $\CC^*$-equivariant homology groups, instead of
$T$-equivariant ones.
Vasserot's work, in turn, was motivated by author's unpublished paper
\cite{Jack}, where Jack symmetric functions were considered in a
different setting, i.e., the case when $X$ is the total space of a
line bundle over a Riemann surface.

Here we present the proof in \cite{LQW-jack}, as well as materials in
\cite{Jack} used there.

The result here could be viewed as a homological version of Haiman's
result \cite{Haiman}, which relates Macdonald polynomials to
$T$-equivariant $K$-theory of Hilbert schemes of points in
$X$. However there is a big difference: we relate $H^T_*(\Hilb{n})$
with the ring of symmetric functions via the Heisenberg representation
in the previous section. On the other hand, Haiman relates them via
the {\it Procesi bundle\/}, a rank $n!$ vector bundle over
$\Hilb{n}$. The definition of the Procesi bundle is rather involved,
and will not be presented here. Let us also remark that the Procesi
bundle is essential to prove the positivity conjecture for Macdonald
polynomials, and we do not have a counter part in our theory.

\subsection{Jack symmetric functions}

Let us briefly recall the definition of Jack symmetric functions in
\cite[VI.10]{Mac}. Our notation follows \cite{Mac} except that the
parameter $\alpha$ is replaced by $\bk$ here.

Let $\bk$ be an indeterminate. We define an inner product on the ring
$\Lambda_{\QQ(\bk)}$ of symmetric functions with coefficients in
$\QQ(\bk)$ by
\begin{equation}\label{eq:3}
  \la p_\lambda, p_\mu\ra = \delta_{\lambda\mu} \bk^{l(\lambda)} z_\lambda,
\end{equation}
where $l(\lambda)$ is the length of a partition $\lambda$ and
$z_\lambda = \prod k^{m_k} m_k!$ for $\lambda = (1^{m_1}
2^{m_2}\cdots)$.

Let $\ge$ denote the {\it dominance order\/} on partitions, i.e.,
$\lambda\ge\mu$ if and only if $|\lambda|=|\mu|$ and
\begin{equation}\label{eq:dom}
  \lambda_1+\cdots+\lambda_i \ge \mu_1 + \cdots + \mu_i 
  \quad\text{for all $i$}.
\end{equation}

If $\lambda'$, $\mu'$ denote conjugate partitions of $\lambda$, $\mu$
respectively, we have
\begin{equation*}
  \lambda\ge\mu \Longleftrightarrow \mu'\ge\lambda'.
\end{equation*}
See \cite[I.(1.11)]{Mac} for the proof.

Let $m_\lambda$ be the monomial symmetric function. A Jack symmetric
function $P^{(\bk)}_\lambda$ is characterized by the following two
properties
\begin{align}
  & P^{(\bk)}_\lambda = m_\lambda
  + \sum_{\mu < \lambda} u^{(\bk)}_{\lambda\mu} m_\mu\quad
  \text{for $u^{(\bk)}_{\lambda\mu}\in\QQ(\bk)$},\label{eq:2}
\\
  & \la P^{(\bk)}_\lambda, P^{(\bk)}_\mu\ra = 0 \quad
  \text{if $\lambda\neq\mu$}.\label{eq:2.1}
\end{align}

Let $\succeq$ be a total order refining the dominance order. Then $\{
P^{(\bk)}_\lambda\}$ is obtained from the base $\{ m_\lambda\}$ by
Gram–-Schmidt orthogonalization with respect to $\succeq$. Hence the
uniqueness of $P^{(\bk)}_\lambda$ follows. The existence is obvious if
we replace $\ge$ by $\succeq$ in \eqref{eq:2}, and this difference of
orders is a nontrivial part in the theory of Jack symmetric
functions. We briefly recall the proof in \cite{Mac} in the next
subsection.

When $\bk = 1$, the inner product \eqref{eq:3} is the standard
one. The above two properties are satisfied by Schur functions
$s_\lambda$.

\subsection{Hamiltonian}

For a positive integer $N$, let us consider the ring
$\Lambda_{N,\QQ(\bk)}$ of symmetric functions in $N$-variables $x_1$,
\dots, $x_N$ with $\QQ(\bk)$-coefficients. (Note that $N$, $\bk$
correspond to $n$, $\alpha$ in \cite{Mac}.)

Let $X$ be an indeterminate and let
\begin{equation*}
  D_N \defeq a_\delta(x)^{-1}\sum_{w\in S_N} \varepsilon(w) x^{w\delta}
  \prod_{i=1}^N 
  \left(X + (w\delta)_i + \bk x_i\frac{\partial}{\partial x_i}\right),
\end{equation*}
where $\delta = (N-1,N-2,\dots,0)$, $a_\delta(x) =
\prod_{i<j}(x_i-x_j)$ is the Vandermonde determinant, $\ve(w)=\pm 1$
is the sign of $w\in S_N$, and $(w\delta)_i$ is the $i^{\mathrm{th}}$
component of $w\delta$.

For $r=0,1,\dots, N$ let $D^r_N$ denote the coefficient of $X^{N-r}$
in $D_N$:
\begin{equation*}
  D_N = \sum_{r=0}^N X^{N-r} D^r_N.
\end{equation*}

By \cite[VI, \S3.~Ex.3(a)]{Mac} we have
\begin{equation*}
  D_N m_\lambda(x) = \sum_{w\in S_N}\prod_{i=1}^N (X + N-i + \bk (w\delta)_i)
  s_{w\lambda}(x),
\end{equation*}
where
\begin{equation*}
  s_{w\lambda}(x) = \frac{a_{\delta+w\lambda}(x)}{a_{\delta}(x)}
  = \frac{1}{a_{\delta}(x)}
  \sum_{w_1\in S_N} \ve(w_1) x^{w_1(w\lambda+\delta)}.
\end{equation*}
(There is a typo in \cite[VI, \S3,~Ex.3(a)]{Mac}. In the formula (a),
$\lambda_i$ should read $\beta_i$ or $(w\lambda)_i$ in the current
notation.)
\begin{NB}
  We take the differential of $D_n(X;q,t)$ at $t=1$ after setting
  $q=t^\alpha$. Then we obtain the above
  $\left.X^{-N}D_n\right|_{X\mapsto X^{-1}}$. Therefore the above
  formula follows from \cite[VI (3.6)]{Mac}.
\end{NB}%
Since $s_{w\lambda}(x)$ is either zero or is equal to $\pm s_\mu$ for
some partition $\mu < \lambda$ (unless $w=1$), and since the transition
matrix between $\{ s_\lambda \}$ and $\{ m_\lambda\}$ is upper
triangular with $1$ on diagonal, we get
\begin{equation*}
  D_N m_\lambda(x) = \sum_{\mu\le\lambda} c_{\lambda\mu}(X;\bk) m_\mu(x)
\end{equation*}
with $c_{\lambda\mu}\in \ZZ[X,\bk]$. See \cite[VI,
\S4,~Ex.4]{Mac}. Here $m_\lambda(x)$ and $s_\lambda(x)$ are zero if
$l(\lambda) > N$, and $\{ m_\lambda(x)\}_{l(\lambda)\le N}$, $\{
s_\lambda(x)\}_{l(\lambda)\le N}$ are bases of $\Lambda_{N}$.  The
diagonal entries are given by
\begin{equation*}
  c_{\lambda\lambda}(X;\bk) = \prod_{i=1}^N (X + N-i + \bk\lambda_i).
\end{equation*}

\begin{NB}
  Let us consider the coefficient $c^2_{\lambda\lambda}$ of $X^{N-2}$:
  \begin{equation*}
    \begin{split}
      &c^2_{\lambda\lambda}
    = \sum_{i<j} (N-i +\bk\lambda_i)(N-j +\bk\lambda_j)
\\
   =\; & \frac12\sum_{i\neq j}(N-i)(N-j)+2\bk\lambda_i(N-j)
   + \bk\lambda_i\lambda_j.
    \end{split}
  \end{equation*}
Then
\begin{equation*}
  \begin{split}
  & \sum_{i\neq j} \lambda_i\lambda_j = |\lambda|^2 - \sum \lambda_i^2
  = |\lambda|^2 - |\lambda| -  2 n(\lambda^t) 
  = |\lambda|^2 - |\lambda| -  2 \sum_{s\in \lambda} a'(s),
\\
  & \sum_{i\neq j} j \lambda_i = \frac{N(N-1)}2|\lambda| - \sum_j j\lambda_j
  = \left(\frac{N(N-1)}2-1\right)|\lambda| - n(\lambda)
  = \left(\frac{N(N-1)}2-1\right)|\lambda| - \sum_{s\in\lambda} l'(s).
  \end{split}
\end{equation*}
\end{NB}

When we set $\bk$ a positive real number, $N - i + \bk\lambda_i$ is
strictly decreasing in $i$. Therefore $c_{\lambda\lambda}\neq
c_{\mu\mu}$ for $\lambda\neq\mu$. Therefore
\begin{NB}
  See the argument in \cite[VI (4.7)]{Mac}.
\end{NB}%
there exists $P^{(\bk)}_\lambda$ of the form \eqref{eq:2} such that
\begin{equation*}
  D_N P^{(\bk)}_\lambda = c_{\lambda\lambda}(X;\bk) P^{(\bk)}_\lambda.
\end{equation*}

Moreover $D_N$ is self-adjoint with respect to the finitely many
variable version of the inner product \eqref{eq:3} \cite[VI,
\S3,~Ex.3(b)]{Mac}. Therefore we deduce \eqref{eq:2.1}.
\begin{NB}
\begin{equation*}
  \la P^{(\bk)}_\lambda, P^{(\bk)}_\mu\ra = 0\quad\text{if $\lambda\neq\mu$}.
\end{equation*}
\end{NB}%
It means that $\{ P^{(\bk)}_\lambda\}$ is obtained from the base $\{
m_\lambda\}$ by Gram–-Schmidt orthogonalization with respect to
any total order $\succeq$ compatible with $\ge$.

The ring $\Lambda_{\QQ(\bk)}$ is defined as the direct sum of
projective limit
\begin{equation*}
  \Lambda_{\QQ(\bk)} = \bigoplus \varprojlim_N \Lambda^n_{N,\QQ(\bk)},
\end{equation*}
where $\Lambda^n_{N,\QQ(\bk)}$ is the degree $n$ part of
$\Lambda_{N,\QQ(\bk)}$, and the inverse system is given by the
homomorphism $\Lambda_{M,\QQ(\bk)}\to \Lambda_{N,\QQ(\bk)}$ sending
$x_{N+1},\dots, x_{M}$ to $0$.
Under the homomorphism $m_\lambda(x_1,\dots,x_M)$ is sent to
$m_\lambda(x_1,\dots,x_N)$ if $l(\lambda)\le N$, and to $0$ if
$l(\lambda) > N$. Therefore Gram--Schmidt orthogonalization is
compatible with $N$, and we get
$P^{(\bk)}_\lambda\in\Lambda_{\QQ(\bk)}$, as the limit of
$P^{(\bk)}_\lambda$ above when $N\to\infty$. This finishes the proof
of the existence of Jack symmetric functions.

For a later purpose, let us give a formula for $D^2_N$ (\cite[VI, \S3,
Ex.3(d)]{Mac}). Suppose that $f$ is a homogeneous polynomial of degree
$r$. Then
\begin{equation*}
  D_N^2 f = (-\bk^{2} U_N - \bk V_N + c_N) f,
\end{equation*}
where
\begin{equation*}
  U_N = \frac12 \sum_{i=1}^N x_i^2 \frac{\partial^2}{\partial x_i^2}, \quad
  V_N = \sum_{i\neq j} \frac{x_i^2}{x_i - x_j}\frac{\partial}{\partial x_i},
\end{equation*}
and
\begin{equation*}
  c_N = \frac12 \bk^{2} r(r-1) + \frac12 \bk rN(N-1)
  +\frac1{24} N(N-1)(N-2)(3N-1).
\end{equation*}

We further introduce an operator
\begin{equation*}
  \square^{\bk}_N f = \left(\bk U_N + V_N - (N-1)r\right)f.
\end{equation*}
This operator has the limit 
\begin{equation}\label{eq:limit}
  \square^{\bk} = \varprojlim_{N}\square^{\bk}_N.
\end{equation}
See \cite[VI, \S4, Ex.3(a)]{Mac} or \eqref{eq:15} below.

We have
\begin{equation}\label{eq:17}
  \square^{\bk} P^{(\bk)}_\lambda = e_\lambda(\bk) P^{(\bk)}_\lambda;
  \qquad e_\lambda(\bk)\defeq n(\lambda')\bk - n(\lambda),
\end{equation}
where
\begin{equation*}
  n(\lambda) = \sum (i-1) \lambda_i = \sum \frac{\lambda_i'(\lambda'_i-1)}2.
\end{equation*}

Computing how $\square^{\bk}$ acts on the base $\{ p_\lambda\}$ of
$\Lambda_{\QQ(\bk)}$, we obtain the following formula:
\begin{equation}\label{eq:15}
  \begin{split}
  \square^{\bk} &=
  \frac{\bk}2 \sum_{m,n>0} m n p_{m+n} \frac{\partial}{\partial p_m}
  \frac{\partial}{\partial p_n}
  + \frac{\bk-1}2 \sum_{m>0} m(m-1)p_m \frac{\partial}{\partial p_m}
\\
  &\qquad +
  \frac12\sum_{m,n>0} (m+n) p_m p_n \frac{\partial}{\partial p_{m+n}}.
  \end{split}
\end{equation}
Here we regard $\Lambda_{\QQ(\bk)}$ as a polynomial ring
$\QQ(\bk)[p_1,p_2,\dots]$. This formula will be crucial in
\secref{sec:Vir}. It is not present in \cite{Mac}. The author learned
it from \cite{AMOS} when he wrote \cite{Jack}, but it was certainly
known much before.

\begin{NB}
\begin{equation*}
  \begin{split}
  U_N &= \frac12\sum_{m,n>0} m n p_{m+n} \frac{\partial}{\partial p_m}
  \frac{\partial}{\partial p_n}
  + \frac12 \sum_{m>0} m(m-1)p_m \frac{\partial}{\partial p_m},
\\
  V_N &= \frac12\sum_{m,n>0} (m+n) p_m p_n \frac{\partial}{\partial p_{m+n}}
  + \frac12 \sum_{m>0} (2N-1-m)m p_m\frac{\partial}{\partial p_{m}}.
  \end{split}
\end{equation*}
\end{NB}

\begin{Remark}
    The operator $D^2_N$ is essentially equal to the
    Calogero-Sutherland hamiltonian, which has been studied
    intensively in the context of quantum integrable systems.
    (See \cite[\S5.5]{Etingof-CM} and the reference therein for example.)
\begin{NB}
Here the Calogero-Sutherland hamiltonian:
\begin{equation*}
    H_N \defeq \sum_{i=1}^N D_i^2 + \bk\sum_{i<j}\frac{x_i+x_j}{x_i-x_j}(D_i - D_j),
    \qquad D_i\defeq x_i\frac{\partial}{\partial x_i}.
\end{equation*}
\end{NB}%
It is a trigonometric analog of a quantization of the Calogero-Moser
integrable system, appeared in Wilson's work \cite{Wilson}, mentioned
at \cite[a paragraph preceding Theorem~3.46]{Lecture}.
At first sight, two appearances of this integrable system have no
link: one is a classical system and appears in the deformation of
$\Hilb{n}$, while the other is quantum and appears in the cohomology
of $\Hilb{n}$. However, they are connected in a deep way:
Bezrukavnikov-Finkelberg-Ginzburg \cite{BFG-Hilb} considered the
quantized integrable system in {\it positive characteristic}, and
connected it with the derived category of $\Hilb{n}$. See also
\cite{2012arXiv1208.3696B} for a nice application of this result.
\end{Remark}

\subsection{Inner product}

After giving the review of the definition and basic properties of Jack
symmetric functions, we start study of equivariant homology groups of
Hilbert schemes.
In this subsection we identify \eqref{eq:3} with the inner product on
$\bigoplus H^*_T(\Hilb{n})_\loc$ induced from \eqref{eq:14}.

Let $\alpha,\beta\in H^*_T(X)_\loc$. The commutation relation
\eqref{eq:5} implies
\begin{equation*}
  \la \PP{\alpha}{-n}, \PP{\beta}{-n}\ra = n\la \alpha,\beta\ra.
\end{equation*}

More generally, we consider an analog of $p_\lambda$:
\begin{equation*}
  \PP{\alpha}{\lambda} = \PP{\alpha}{-\lambda_1}\PP{\alpha}{-\lambda_2}\cdots
\end{equation*}
for $\lambda = (\lambda_1,\lambda_2,\dots)$. Then we have
\begin{equation}\label{eq:1}
  \la \PP{\alpha}{\lambda}, \PP{\beta}{\mu}\ra
  = \delta_{\lambda\mu}\la\alpha,\beta\ra^{l(\lambda)} z_\lambda,
\end{equation}
where $z_\lambda = \prod k^{m_k} m_k!$ for $\lambda = (1^{m_1}
2^{m_2}\cdots)$.
\begin{NB}
\(  \left(\frac{\partial}{\partial x}\right)^n x^n = n!.\)
\end{NB}

In particular, we take $\alpha$ and $\beta$ to be the Poincar\'e dual
of the $x$-axis, i.e., $\alpha = \beta = \ve_2$. Then
\begin{equation*}
  \la\alpha,\beta\ra
  \begin{NB}
    = - \frac{\ve_2^2}{\ve_1\ve_2}
  \end{NB}%
  = - \frac{\ve_2}{\ve_1}.
\end{equation*}

Substituting this into \eqref{eq:1} and comparing the result with
\eqref{eq:3}, we find
\begin{Proposition}\label{prop:ident}
  Our inner product is equal to \eqref{eq:3} used to define Jack
  symmetric functions under the identification $\PP{\ve_2}{\lambda}
  \leftrightarrow p_\lambda$, where the parameter $\bk$ is
  $-\ve_2/\ve_1$.
\end{Proposition}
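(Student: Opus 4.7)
The plan is to combine the pairing formula \eqref{eq:1} for the Heisenberg generators (itself a consequence of the commutation relation \eqref{eq:5} already established) with an explicit one-point localization computation of $\la\ve_2,\ve_2\ra$ on $X=\CC^2$.

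Specializing \eqref{eq:1} to $\alpha=\beta=\ve_2$, the equivariant Poincar\'e dual of the $x$-axis $C=\{\xi=0\}$, one gets
\[
  \la\PP{\ve_2}{\lambda},\PP{\ve_2}{\mu}\ra
    = \delta_{\lambda\mu}\la\ve_2,\ve_2\ra^{l(\lambda)} z_\lambda.
\]
Here $\ve_2$ is the equivariant first Chern class corresponding to the weight of $\xi$, regarded after localization interchangeably as an element of $H^*_T(X)_\loc$ or, via Poincar\'e duality on the smooth variety $X$, of $H^{T,lf}_*(X)_\loc$; this is what makes the pairing \eqref{eq:18}, namely $\la\ve_2,\ve_2\ra = -a_{X*}(\ve_2\cap\ve_2)$, well defined once the equivariant parameters are inverted.

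To compute $a_{X*}(\ve_2\cap\ve_2)$ I would invoke the Atiyah--Bott--Berline--Vergne formula \thmref{thm:fixed} applied to $M = X = \CC^2$: the unique $T$-fixed point is the origin with tangent weights $\ve_1,\ve_2$, hence $e(T_0 X)=\ve_1\ve_2$. The restriction of $\ve_2\cap\ve_2$ to this fixed point is $\ve_2^2$, so
\[
   a_{X*}(\ve_2\cap\ve_2) = \frac{\ve_2^2}{\ve_1\ve_2} = \frac{\ve_2}{\ve_1},
   \qquad
   \la\ve_2,\ve_2\ra = -\frac{\ve_2}{\ve_1}.
\]
Substituting back, the pairing on $\bigoplus H^*_T(\Hilb{n})_\loc$ becomes $\delta_{\lambda\mu}(-\ve_2/\ve_1)^{l(\lambda)} z_\lambda$, which agrees term by term with the Jack inner product \eqref{eq:3} under the identification $\PP{\ve_2}{\lambda}\leftrightarrow p_\lambda$ upon setting $\bk = -\ve_2/\ve_1$. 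There is no real obstacle here; the only subtlety is the bookkeeping one of using Poincar\'e duality to move $\ve_2$ between cohomology and Borel--Moore homology so that both the cap product in \eqref{eq:18} and the equivariant pushforward to a point make sense in the localized setting.
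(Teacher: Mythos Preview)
Your proposal is correct and follows essentially the same route as the paper: specialize \eqref{eq:1} to $\alpha=\beta=\ve_2$, compute $\la\ve_2,\ve_2\ra=-\ve_2/\ve_1$, and compare with \eqref{eq:3}. The paper simply asserts $\la\ve_2,\ve_2\ra=-\ve_2^2/(\ve_1\ve_2)=-\ve_2/\ve_1$ without further comment, while you spell out the one-point localization and the Poincar\'e-duality bookkeeping; these are exactly the details implicit in the paper's one-line computation.
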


\begin{NB}
  In this situation \eqref{eq:5} reads
  \begin{equation*}
  \left[ \PP{\alpha}{i}, \PP{\beta}{j}\right]
  = i\delta_{i+j,0} \bk\operatorname{id}.
  \end{equation*}
\end{NB}

When $\ve_1+\ve_2 = 0$, we have $\bk = -\ve_2/\ve_1 = 1$. It means
that our inner product is the standard inner product on symmetric
polynomials.

\subsection{Dominance order}

Recall that we identify the monomial symmetric function $m_\lambda$
with the class $[L^\lambda C]$ in \propref{prop:monomial}. In order to
understand the characterization of Jack symmetric functions in
(\ref{eq:2}, \ref{eq:2.1}), our next task is to explain a geometric
meaning of the dominance order \eqref{eq:dom}. It is given by
modifying the stratification introduced in \cite{Bri,Iar}.

Let $C = \{ \xi=0\}$ as before.
For $i\ge 0$, let $(\xi^i)
\begin{NB}
  = \shfO_X(-iC)
\end{NB}%
$ be the ideal of functions vanishing to order $\ge i$ along $C$.
Let $\idl\in\Hilb{n}$ be an ideal of colength $n$ such that the
support of $\CC[z,\xi]/\idl$ is contained in $C$.  We consider the
sequence $(\lambda'_1,\lambda'_2,\dots)$ of nonnegative integers given
by
\begin{equation}\label{eq:9}
  \lambda'_i(\idl)\defeq
   \begin{NB}
  \operatorname{length}\left(
   \frac{\shfO_X(-(i-1)C)}{\idl\cap\shfO_X(-(i-1)C)+\shfO_X(-iC)}
   \right) =
    \end{NB}%
  \dim\left(
    \frac{(\xi^{i-1})}{\idl\cap (\xi^{i-1})+ (\xi^i)}
  \right).
\end{equation}
The reason why we put the prime become clear later.  The sequence in
\cite{Bri,Iar} was defined by replacing $(\xi^i)
\begin{NB}
= \shfO_X(-iC)  
\end{NB}%
$ by $\frak m_x^i$ where $\frak m_x$ is the maximal ideal
corresponding to a point $x$. It is clear that $\idl\supset (\xi^n)
\begin{NB}
  = \shfO_X(-nC)
\end{NB}%
$ (see e.g., \cite[Lemma~1.1]{Iar}), hence $\lambda'_i(\idl) = 0$ for
$i\ge n+1$.  From the exact sequence
\begin{NB}
\begin{equation*}
 0 \to \frac{\shfO_X(-iC)}{\idl\cap\shfO_X(-iC)} \to
      \frac{\shfO_X(-(i-1)C)}{\idl\cap\shfO_X(-(i-1)C)} \to
 \frac{\shfO_X(-(i-1)C)}{\idl\cap\shfO_X(-(i-1)C)+\shfO_X(-iC)}
 \to 0,
\end{equation*}
\end{NB}%
\begin{equation*}
 0 \to \frac{(\xi^i)}{\idl\cap (\xi^i)} \to
      \frac{(\xi^{i-1})}{\idl\cap (\xi^{i-1})} \to
 \frac{(\xi^{i-1})}{\idl\cap (\xi^{i-1})+(\xi^i)}
 \to 0,
\end{equation*}
we have
\begin{equation*}
  \sum_{i=1}^n \lambda'_i(\idl) = n.
\end{equation*}

\begin{NB}
The following deduction is unnecessary for $X=\CC^2$.

Let us decompose the ideal $\idl$ by its support, i.e.,
$\idl = \idl_1\cap\dots\cap\idl_N$ such that
$\{ \Supp(\CC[z,\xi]/\idl_k) \}_k$ are $N$ distinct points.
By definition, we have
\begin{equation}
  \lambda_i'(\idl) = \sum_{k=1}^N \lambda_i'(\idl_k).
\label{eq:sum}\end{equation}
\end{NB}

\begin{NB}
Suppose that $\idl$ satisfies $\Supp(\CC[z,\xi]/\idl) = \{(z,0)\}$ for
some $(z,0)\in C$.
\end{NB}%
If $\xi^i f_1(z),\dots, \xi^i f_d(z)$ form a basis of
\begin{equation*}
  \frac{(\xi^i)}{\idl\cap (\xi^i)+ (\xi^{i+1})}
  \begin{NB}
    =
    \frac{\shfO_X(-iC)}{\idl\cap\shfO_X(-iC)+\shfO_X(-(i+1)C)}
  \end{NB}%
  ,  
\end{equation*}
Then $\xi^{i-1} f_1(z),\dots, \xi^{i-1} f_d(z)$ are linearly
independent in 
\begin{equation*}
    \frac{(\xi^{i-1})}{\idl\cap (\xi^{i-1})+(\xi^i)}
  \begin{NB}
    \frac{\shfO_X(-(i-1)C)}{\idl\cap\shfO_X(-(i-1)C)+\shfO_X(-iC)}
  \end{NB}%
  .
\end{equation*}
Hence we have $\lambda'_i(\idl) \ge \lambda'_{i+1}(\idl)$. Thus
$(\lambda'_1(\idl), \lambda'_2(\idl), \dots)$ is a partition of $n$.
\begin{NB}
By \eqref{eq:sum}, the same is true for general $\idl$ which do not
necessarily satisfy $\Supp(\shfO_X/\idl)=\{x\}$.
\end{NB}%
Let us denote the partition by $\lambda'(\idl)$.

For a partition $\lambda' = (\lambda'_1,\lambda'_2,\dots)$ of $n$, let
$V^{\lambda'}$ be the set of ideals $\idl\in\Hilb{n}$ such that
$\CC[z,\xi]/\idl$ is supported on $C$ and $\lambda'(\idl) = \lambda'$.
Since
\begin{equation*}
  \dim \frac{(\xi^i)}{I\cap (\xi^i)}
  \begin{NB}
  = \operatorname{length}(\shfO_X(-iC)/\idl\cap\shfO_X(-iC))
  \end{NB}
  \leq
  \sum_{j=i+1}^n \lambda'_j = n - \sum_{j=1}^i \lambda'_j
\end{equation*}
is a closed condition on $\idl$,
\begin{NB}
  $\idl\cap (\xi^i)$ may drop the `dimension' when $\idl$ goes to a
  limit.
\end{NB}%
the union
\begin{equation*}
   \bigcup_{\mu': \mu'\ge \lambda'} V^{\mu'}
\end{equation*}
is a closed subset of $\{\idl\in \Hilb{n}\mid
\Supp(\CC[z,\xi]/\idl)\subset C\}$.
\begin{NB}
  For example, $\lambda'=(n)$ is the maximum, so $\mu'\ge \lambda'$
  implies $\mu' = (n)$. Therefore $V^{(n)}$ is closed. In fact,
  $V^{(n)}$ is the symmetric product of $C$. The opposite extreme is
  $\lambda' = (1^n)$. For example, if $n=2$, $I = (\alpha x+\beta y,
  x^2, \xi^2)$ is in $V^{(1^2)}$ if and only if $\alpha\neq 0$. Thus
  $V^{(1^2)}$ is not closed.
\end{NB}%
Thus we have
\begin{equation}
\label{eq:closure}
  \text{Closure of }V^{\lambda'} \subset\bigcup_{\mu'\ge\lambda'}
  V^{\mu'}.
\end{equation}

Suppose that $\lambda'$ is the conjugate partition of $\lambda$. We
get the following third description of $L^\lambda{C}$.
\begin{Proposition}
  $L^\lambda{C} = \text{Closure of }V^{\lambda'}$.
\label{prop:third}\end{Proposition}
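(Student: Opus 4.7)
The plan is to prove the stronger statement that $V^{\lambda'}=W_\lambda^-$ as subsets of $L^*C$; the Proposition then follows by taking closures, since $L^\lambda C=\overline{W_\lambda^-}$ by the first description. The key tool is the $\CC^*$-action $t\cdot(z,\xi)=(z,t\xi)$, which is compatible with the $\xi$-adic filtration $R\supset(\xi)\supset(\xi^2)\supset\cdots$ on $R=\CC[z,\xi]$ used to define $\lambda'_i(\idl)$ in \eqref{eq:9}. Given $\idl\in L^*C$, decompose $\idl=\bigcap_{j=1}^k\idl_j$ by primary components supported at distinct points $(x_j,0)\in C$. Since the action fixes the $z$-coordinate, it preserves this decomposition, and the limit $\idl^*=\lim_{t\to\infty}t\cdot\idl$ splits as $\bigcap_j\idl_j^*$, where each $\idl_j^*=I_{D_j^*,x_j}$ is a $T$-fixed monomial ideal; then $\idl^*\in S^{\lambda^*}C$ with $\lambda^*$ the union partition of the $D_j^*$.

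The crucial input is that $\idl_j^*$ coincides with the initial ideal of $\idl_j$ under the $\xi$-filtration: writing $f\in\idl_j$ as $\sum_b f_b(z-x_j)\xi^b$ with leading $\xi$-degree $b(f)=\min\{b:f_b\neq 0\}$, the limit ideal is generated by the leading terms $f_{b(f)}(z-x_j)\xi^{b(f)}$. Consequently $R/\idl_j^*\cong\operatorname{gr}_\xi(R/\idl_j)$ as $\xi$-graded vector spaces, and the $\xi$-degree $i-1$ components have the same dimension. For $\operatorname{gr}_\xi(R/\idl_j)$ this dimension equals $\lambda'_i(\idl_j)$ by \eqref{eq:9}, while for the monomial quotient $R/\idl_j^*$ it equals $(D_j^*)'_i$, the number of boxes in row $i$ of $D_j^*$. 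Hence $\lambda'(\idl_j)=(D_j^*)'$, and summing via the CRT splitting $R/\idl\cong\bigoplus_j R/\idl_j$ yields $\lambda'(\idl)=\sum_j(D_j^*)'=(\lambda^*)'$. The condition $\lambda'(\idl)=\lambda'$ is therefore equivalent to $\lambda^*=\lambda$, i.e., to $\idl\in W_\lambda^-$, proving $V^{\lambda'}=W_\lambda^-$ and hence the Proposition.

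The main obstacle is justifying the flat-limit identification $\lim_{t\to\infty}t\cdot\idl_j=\operatorname{in}_\xi(\idl_j)$, i.e., that the limit taken in $\Hilb{n}$ coincides with the associated graded ideal under the $\xi$-filtration. This is standard in the theory of Gr\"obner (or Rees) degenerations, obtained by extending the flat family $\{t\cdot\idl_j\}_{t\in\CC^*}$ uniquely to a flat family over $\proj^1$; once granted, the rest of the argument reduces to a direct graded dimension count for monomial ideals.
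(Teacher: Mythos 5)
Your argument is correct, but it follows a genuinely different route from the paper, and in fact proves more. You establish the stratum-level identity $V^{\lambda'}=W_\lambda^-$ by computing the limit $\lim_{t\to\infty}t\cdot\idl$ exactly: decompose by support, identify each limit with the initial ideal $\operatorname{in}_\xi(\idl_j)$ via the Rees/Gr\"obner degeneration (a standard fact, which you correctly flag as the one nontrivial input, and you also get the direction right -- the $t\to\infty$ limit picks out the \emph{lowest} $\xi$-degree forms, matching the filtration in \eqref{eq:9}), and then match graded dimensions, $\dim(R/\operatorname{in}_\xi(\idl_j))_{i-1}=\dim\operatorname{gr}^{i-1}_\xi(R/\idl_j)$, to conclude $\lambda'(\idl)=(\lambda^*)'$. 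The paper's proof is softer: it only checks that a \emph{generic} point of $W_\lambda^-$ lies in $V^{\lambda'}$, using that the limit of a generic point is an intersection of single-column ideals $I_{(\lambda_k),z_k}$, the directly verified membership $I_{(\lambda_k),z_k}\in V^{(1^{\lambda_k})}$, the openness of the minimal stratum $V^{(1^{\lambda_k})}$ in $L^*C$ (a consequence of \eqref{eq:closure}), and $\CC^*$-invariance of the strata; it then concludes $L^\lambda C\subset\overline{V^{\lambda'}}$ and upgrades to equality using the partition $L^*C=\bigsqcup V^{\mu'}$. So the paper avoids any identification of flat limits with initial ideals, at the price of arguing only up to closure, while your degeneration argument buys the sharper statement that the Bia{\l}ynicki-Birula stratification by $W_\lambda^-$ coincides set-theoretically with the Brian\c{c}on--Iarrobino-type stratification by $V^{\lambda'}$ (and it does not even need \eqref{eq:closure}). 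Two small imprecisions, neither fatal: the limits $\idl_j^*$ are fixed by the relevant $\CC^*$ (not by $T$ unless $x_j=0$), being monomial in $(z-x_j)$ and $\xi$; and the splitting of the limit as $\bigcap_j\idl_j^*$ deserves a word (constancy and disjointness of supports along the family, or apply the degeneration directly to $\idl$ and decompose the $\xi$-homogeneous limit afterwards, which avoids the issue entirely).
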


\begin{proof}
  Let us write $\lambda = (\lambda_1,\dots,\lambda_N)$ with $N =
  l(\lambda)$.  Using the description of $L^\lambda C$ as the closure
  of $W_\lambda^-$, we first check that $W_\lambda^-\subset
  V^{\lambda'}$. We may take a generic point in $\idl\in W_\lambda^-$,
  so $\lim_{t\to\infty} t\cdot\idl = \idl_{(\lambda_1),z_1}\cap \dots
  \cap \idl_{(\lambda_N),z_N}$ such that $x_i$'s are distinct points
  in $C$ and Young diagrams $D_1$, \dots have only single column, $D_1
  = (\lambda_1)$, \dots.
  Since the support of $\idl$ cannot move as $t\to\infty$, we
  can decompose $\idl = \idl_1\cap\dots\cap\idl_N$ such that
  $\Supp(\CC[z,\xi]/\idl_k) = \{(z_k,0)\}$.

  Recall that $I_{(\lambda_k),z_k}$ is the ideal
  $(z-z_k,\xi^{\lambda_k})$. It is contained in $V^{({\lambda_k})'}$,
  as can be checked directly in the definition. Since $({\lambda_k})'
  = (1^{\lambda_k})$ is the unique minimum in the dominance order,
  $V^{({\lambda_k})'}$ is open in $L^*C$.
  As $\lim_{t\to\infty} t\cdot\idl_k = \idl_{(\lambda_k),x_k}$, we
  have $t\cdot \idl_k\in V^{({\lambda_k})'}$ for sufficiently large $t$.
  It is clear that $V^{\lambda'}$ is invariant under the
  $\CC^*$-action for any $\lambda$. In particular, $\idl_k\in
  V^{({\lambda_k})'}$. 
  Since $I=I_1\cap\dots\cap I_N$, we have $I\in V^{\lambda'}$. Thus
  $L^\lambda C\subset \text{Closure of }V^{\lambda'}$. As we have
  $L^*C = \bigsqcup V^{\lambda'}$,
  \begin{NB}
    So the closure of $V^{\lambda'}$ cannot be bigger than $L^*C$.
  \end{NB}%
  we conclude $L^\lambda C = \text{Closure of }V^{\lambda'}$.
\end{proof}

\subsection{Fixed points and Jack symmetric functions}

Recall that the torus fixed points in $\Hilb{n}$ are parametrized by
partitions $\lambda$ with $|\lambda|=n$, as we have explained at the
beginning of \subsecref{sec:heisenberg-algebra}. 

Let $\iota\colon (\Hilb{n})^T = \bigsqcup_\lambda \{ I_\lambda\}\to
L^*C \cap \Hilb{n}$ be the inclusion of the $T$-fixed point set. Here
note that all fixed points $I_\lambda$ are contained in $L^*C$. Let
$\zeta\colon L^*C \cap \Hilb{n}\to\Hilb{n}$ be the inclusion.
By the localization theorem, we have isomorphisms
\begin{equation}\label{eq:7}
  H^T_*((\Hilb{n})^T)_\loc \xrightarrow[\cong]{\iota_*}
  H^{T,lf}_*(L^*C \cap \Hilb{n})_\loc
  \xrightarrow[\cong]{\zeta_*}
  H^{T,lf}_*(\Hilb{n})_\loc.
\end{equation}

The leftmost space $H^T_*((\Hilb{n})^T)_\loc$ is the direct sum
$\bigoplus_\lambda \QQ(\ve_1,\ve_2)[I_\lambda]$, in particular, it has
a base $[I_\lambda]$. As $L^*C\cap\Hilb{n}$ is the union of
$W^-_\lambda$, which is a vector bundle over an affine space
$S^\lambda C$ by \subsecref{sec:symmetric-products-x}, a standard
argument (as in \cite[Ch.5]{Lecture}) show that $[L^\lambda C]$ is a
base of the middle space $H^{T,lf}_*(L^*C \cap \Hilb{n})_\loc$. In
particular, we have an isomorphism
\begin{equation}\label{eq:12}
  \Lambda\otimes\QQ(\ve_1,\ve_2) \cong H^{T,lf}_*(L^*C)_\loc,
\end{equation}
extending the isomorphism $\Lambda\cong H^{lf}_{\operatorname{top}}(L^*C)$.

Let $i_\lambda\colon \{ I_\lambda\}\to \Hilb{n}$ be the inclusion of
the fixed point $I_\lambda$ to the Hilbert scheme. Therefore
$\zeta\circ\iota = \bigsqcup i_\lambda$.

The inverse of the composition $\zeta_*\iota_*$ in \eqref{eq:7} is given by
\begin{equation*}
  \sum_\lambda \frac1{e(T_{I_\lambda}\Hilb{n})} i_\lambda^*(\bullet),
\end{equation*}
where $e(T_{I_\lambda}\Hilb{n})$ is the $T$-equivariant Euler class of
the tangent space of $\Hilb{n}$ at $I_\lambda$. See \thmref{thm:inverse}.

Let us consider the $\iota_*^{-1}([L^\lambda C])$. Note that the
partition given by \eqref{eq:9} for $I = I_\mu$ is $\mu'$. Hence
$I_\mu\in V^{\mu'}$. Therefore we have $I_\mu\in L^\lambda C$ only if
$\lambda\ge\mu$ by \eqref{eq:closure} and \propref{prop:third}. This
implies that
\begin{equation}\label{eq:10}
  \iota_*^{-1}([L^\lambda C]) \in 
  \bigoplus_{\mu\le\lambda} \QQ(\ve_1,\ve_2)[I_\mu].
\end{equation}

Let us next consider the coefficient of $[I_\lambda]$ in
\eqref{eq:10}. Recall that $L^\lambda C$ is defined as the closure of
$W_\lambda^-$ in \eqref{eq:11}. As $I_\lambda\in S^\lambda C$,
$L^\lambda C$ is a submanifold in a neighborhood of $I_\lambda$. The
tangent space $T_{I_\lambda}L^\lambda C$ is the direct sum of weight
subspaces of $T_{I_\lambda}\Hilb{n}$, whose weights are nonpositive
with respect to the $\CC^*$-action $t(z,\xi) = (z,t\xi)$. Let us
decompose the tangent space $T_{I_\lambda}\Hilb{n}$ into
$T^{>0}_{I_\lambda}\oplus T^{\le 0}_{I_\lambda}$, sum of positive and
nonpositive weight subspaces. Then the fiber of the normal bundle of
$L^\lambda C$ at $I_\lambda$ is identified with $T^{>0}_{I_\lambda}$.
Hence we have
\begin{equation}\label{eq:13}
  i_{\lambda*}^{-1}\zeta_*[L^\lambda C] 
  = \frac{e(T^{>0}_{I_\lambda})}{e(T_{I_\lambda}\Hilb{n})}[I_\lambda]
  = \frac1{e(T^{\le 0}_{I_\lambda})}[I_\lambda].
\end{equation}

Now we arrive at our main result in this section.
\begin{Theorem}\label{thm:Jack}
  Under the isomorphism \eqref{eq:7} together with \eqref{eq:12}, the
  class $\frac1{e(T^{\le 0}_{I_\lambda})}[I_\lambda]$ corresponds to
  the Jack symmetric function $P^{(\bk)}_\lambda$ with $\bk = -\ve_2/\ve_1$.
\end{Theorem}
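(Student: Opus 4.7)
The plan is to verify directly that the class $Q_\lambda := \frac{1}{e(T^{\le 0}_{I_\lambda})}[I_\lambda]$, transported to $\Lambda\otimes\QQ(\ve_1,\ve_2)$ via the chain of isomorphisms \eqref{eq:7} and \eqref{eq:12}, satisfies the two characterizing properties \eqref{eq:2} and \eqref{eq:2.1} of $P^{(\bk)}_\lambda$ with $\bk=-\ve_2/\ve_1$. Since \propref{prop:ident} has already matched the intersection pairing with the Jack inner product \eqref{eq:3} under this substitution, I only need to check dominance triangularity of $Q_\lambda$ in the $m_\mu$-basis and mutual orthogonality of the $Q_\lambda$'s; the uniqueness statement for Jack symmetric functions then yields the claim.

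For triangularity, I would combine \eqref{eq:10}, which confines $\iota_*^{-1}[L^\lambda C]$ to $\bigoplus_{\mu\le\lambda}\QQ(\ve_1,\ve_2)[I_\mu]$, with \eqref{eq:13}, which identifies the leading coefficient as $1/e(T^{\le 0}_{I_\lambda})$. Writing $[I_\mu]=e(T^{\le 0}_{I_\mu})Q_\mu$ converts this into a unipotent upper-triangular change-of-basis from $\{Q_\mu\}$ to $\{[L^\mu C]\}$ with respect to the dominance order. Inverting this relation and invoking the identification $[L^\mu C]\leftrightarrow m_\mu$ from \eqref{eq:12} (which extends \propref{prop:monomial}) produces $Q_\lambda=m_\lambda+\sum_{\mu<\lambda}(\ldots)m_\mu$, which is exactly \eqref{eq:2}.

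For orthogonality, I would apply the fixed point formula \thmref{thm:fixed} to the expression $a_*(Q_\lambda\cap Q_\mu)$ that appears in the pairing \eqref{eq:14}. By \lemref{lem:1} one has $i_\nu^*\iota_*[I_\kappa]=\delta_{\nu\kappa}e(T_{I_\kappa}\Hilb{n})$, hence $i_\nu^*Q_\kappa=\delta_{\nu\kappa}e(T^{>0}_{I_\kappa})$, so the localization sum
\begin{equation*}
a_*(Q_\lambda\cap Q_\mu)=\sum_\nu\frac{i_\nu^*(Q_\lambda\cap Q_\mu)}{e(T_{I_\nu}\Hilb{n})}
\end{equation*}
is forced to vanish unless $\nu=\lambda=\mu$; this yields \eqref{eq:2.1} for $\lambda\neq\mu$.

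The main delicate point I expect is the bookkeeping around the identifications: one must check that the equivariant extension in \eqref{eq:12} really does send $[L^\mu C]$ to $m_\mu$, and not to some $H^*_T(\mathrm{pt})$-multiple thereof, and that in applying the fixed point formula to the pairing the Poincar\'e duality interchanging $H^{T,lf}_*$ and $H^T_*$ is used consistently so that $i_\nu^*$ acts multiplicatively on the cap product. Once these conventions are pinned down, the verification reduces mechanically to \eqref{eq:10}, \eqref{eq:13}, \propref{prop:ident}, and the uniqueness of Jack symmetric functions.
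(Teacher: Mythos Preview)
Your proposal is correct and follows essentially the same route as the paper: verify the two defining properties \eqref{eq:2} and \eqref{eq:2.1}, deducing triangularity from \eqref{eq:10} and \eqref{eq:13} (with the explicit matrix inversion you describe), and orthogonality from the localization theorem. The paper phrases the orthogonality step slightly differently---it observes that $\zeta_*\iota_*$ intertwines the pairing \eqref{eq:14} with the diagonal pairing $\la[I_\lambda],[I_\mu]\ra=\delta_{\lambda\mu}$ on fixed points---but this is exactly what your fixed point computation unwinds.
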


\begin{proof}
  Let us check two properties \eqref{eq:2} and \eqref{eq:2.1}. The
  property \eqref{eq:2} follows from \eqref{eq:10} and \eqref{eq:13}.
  
  Next note that the composite $\zeta_*\iota_*$ of \eqref{eq:7}
  preserves the inner product by the definition of the inner product
  \eqref{eq:14}, where the inner product on
  $H^T_*((\Hilb{n})^T)_\loc\cong \bigoplus
  \QQ(\ve_1,\ve_2)[I_\lambda]$ is the direct sum of the standard inner
  product $\la [I_\lambda],[I_\lambda]\ra = 1$. Then it is clear that
  $\la \frac1{e(T^{\le 0}_{I_\lambda})}[I_\lambda], \frac1{e(T^{\le
      0}_{I_\mu})}[I_\mu]\ra = 0$ if $\lambda\neq\mu$.
\end{proof}

Let us make $e(T^{\le 0}_{I_\lambda})$ concrete.

\begin{Proposition}\label{prop:char}
  The character of the tangent space of $\Hilb{n}$ at the fixed point
  $I_\lambda$ is given by the formula
  \begin{equation*}
    \ch T_{I_\lambda}\Hilb{n} = 
    \sum_{s\in \lambda} \left(t_1^{l(s)+1} t_2^{-a(s)}
      + t_1^{-l(s)} t_2^{a(s)+1}\right).
  \end{equation*}
\end{Proposition}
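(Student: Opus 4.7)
The plan is to identify the tangent space with a Hom of modules, reduce to an Ext computation, and verify a combinatorial identity.

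First, I would invoke Grothendieck's identification $T_{I_\lambda}\Hilb{n} \cong \Hom_R(I_\lambda, R/I_\lambda)$ where $R=\CC[z,\xi]$; this is a $T$-equivariant isomorphism. Applying $\Hom_R(-,R/I_\lambda)$ to the short exact sequence $0\to I_\lambda \to R \to R/I_\lambda \to 0$, and using $\Ext^{\geq 1}_R(R, R/I_\lambda) = 0$ together with $\Hom_R(R/I_\lambda, R/I_\lambda) \cong R/I_\lambda$ (valid since $I_\lambda : I_\lambda = R$), the long exact sequence would collapse to
\begin{equation*}
  \ch T_{I_\lambda}\Hilb{n} = \ch \Ext^1_R(R/I_\lambda, R/I_\lambda).
\end{equation*}

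Next, setting $B_\lambda := \ch R/I_\lambda = \sum_{(i,j)\in D} t_1^{-(i-1)} t_2^{-(j-1)}$ (in the convention where $z$ has $T$-weight $t_1^{-1}$ as a function on $X$), and letting $\bar B_\lambda$ denote its image under $t_k\mapsto t_k^{-1}$, I would apply the Koszul resolution of $\shfO_0$ on $X=\CC^2$ to obtain the general identity
\begin{equation*}
  \chi_T(R/I_\lambda, R/I_\lambda) = (1-t_1)(1-t_2)\, B_\lambda \bar B_\lambda
\end{equation*}
for the alternating sum of Ext characters of finite-length $T$-equivariant modules. Combined with $\ch\Ext^0_R(R/I_\lambda, R/I_\lambda) = B_\lambda$ and the Serre-duality computation $\ch\Ext^2_R(R/I_\lambda, R/I_\lambda) = t_1 t_2 \bar B_\lambda$ (using that $\omega_{\CC^2}$ has equivariant weight $t_1^{-1}t_2^{-1}$, so that $\Ext^2_R(R/I_\lambda, R/I_\lambda)\cong(R/I_\lambda\otimes\omega_{\CC^2})^\ast$), this would give
\begin{equation*}
  \ch T_{I_\lambda}\Hilb{n} = B_\lambda + t_1 t_2\, \bar B_\lambda - (1-t_1)(1-t_2)\, B_\lambda \bar B_\lambda.
\end{equation*}

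The hard part will be verifying that the above expression coincides with $\sum_{s\in\lambda}(t_1^{l(s)+1}t_2^{-a(s)} + t_1^{-l(s)}t_2^{a(s)+1})$, a classical combinatorial identity on Young diagrams. I would proceed by expanding the double sum $B_\lambda \bar B_\lambda = \sum_{s_1, s_2 \in D} t_1^{i_2 - i_1} t_2^{j_2 - j_1}$ and noting that multiplication by $(1-t_1)(1-t_2)$ produces, for each ordered pair $(s_1,s_2)$, a four-term boundary difference. These differences telescope when one regroups the sum row-by-row and column-by-column along the diagram $D$; only contributions from the north--east boundary steps of $D$ survive the cancellation, and these can be organized to produce precisely two monomials per box $s\in D$, matching the arm term $t_1^{l(s)+1}t_2^{-a(s)}$ and the leg term $t_1^{-l(s)}t_2^{a(s)+1}$ respectively.
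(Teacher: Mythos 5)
Your route is correct, and it is genuinely different from the one the paper relies on: the paper does not reprove the formula but cites \cite[Prop.~5.8]{Lecture}, where the character is obtained by a direct Ellingsrud--Str\o mme--style analysis of $T_{I_\lambda}\Hilb{n}\cong\Hom(I_\lambda,\CC[z,\xi]/I_\lambda)$, exhibiting an explicit weight basis of homomorphisms on the monomial generators so that the arm/leg exponents appear box by box. You instead collapse the long exact sequence to $\Ext^1(R/I_\lambda,R/I_\lambda)$ (this step is fine: the map $\Hom(R/I,R/I)\to\Hom(R,R/I)$ is evaluation at $1$ and is an isomorphism, both sides being $R/I$; the remark about $I_\lambda:I_\lambda$ is not really needed), compute the $K$-theoretic Euler characteristic via the Koszul resolution of $\shfO_0$ plus d\'evissage of $R/I_\lambda$ into weight-shifted skyscrapers, and handle $\Ext^2$ by equivariant Serre/local duality, arriving at $B_\lambda+t_1t_2\bar B_\lambda-(1-t_1)(1-t_2)B_\lambda\bar B_\lambda$. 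This is exactly the expression one also gets from the ADHM/quiver complex (cf.\ \eqref{eq:8} with $W=\CC$), and your approach has the advantage of generalizing immediately to higher rank and to the nested correspondences used later in the paper, whereas the cited proof is more elementary but tied to an explicit basis construction. The only soft spot is the final combinatorial identity: it is a true, classical identity, but your telescoping sketch (``only north--east boundary steps survive'') is not yet a proof; the clean way is to first sum each row of $B_\lambda$ against the factor $(1-t_2)$, turning rows into $1-t_2^{\mp\lambda_i}$, and then pair rows with columns so that each box contributes exactly the arm term $t_1^{l(s)+1}t_2^{-a(s)}$ and the leg term $t_1^{-l(s)}t_2^{a(s)+1}$ --- the same bookkeeping carried out in the instanton-counting literature. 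With that verification written out (and a sanity check such as $\lambda=(1)$, giving $t_1+t_2$), your argument is complete.
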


See \cite[Prop.~5.8]{Lecture} for the proof.
Here $l(s)$, $a(s)$ are the {\it leg length\/} and the {\it arm
  length\/} of a square $s$ in the Young diagram corresponding to the
partition $\lambda$. Our convention is the same as in
\cite[(5.7)]{Lecture}, and also as in \cite[(6.14)]{Mac} except that
our Young diagram is rotated by $90^\circ$ in anti-clockwise. For a
later purpose, we also introduce the {\it leg colength\/} and the {\it
  arm colength\/} by $l'(s) = i-1$, $a'(s) = j-1$:
\begin{equation*}
\label{fig:hooklength}
\Yvcentermath1
\young(\hf,\hf\hs,\hf\hs\hf,\cs s\sps\sps,\hf\ds\hf\hf)\qquad\qquad
\begin{matrix}
 a(s) &= \text{number of $\hs$} \\
 l(s) &= \text{number of $\sps$} \\
 a'(s) &= \text{number of $\ds$} \\
 l'(s) &= \text{number of $\cs$}
\end{matrix}
\end{equation*}

\begin{Corollary}\label{cor:Euler}
  The equivariant Euler class of the nonpositive part $T^{\le
    0}_{I_\lambda}$ of the tangent space at the fixed point
  $I_\lambda$ is given by
  \begin{equation*}
    e(T^{\le 0}_{I_\lambda}) = \prod_{s\in\lambda}
    \left((l(s)+1)\ve_1 - a(s) \ve_2\right).
  \end{equation*}
  \begin{NB}
  The equivariant Euler class $e(T_{I_\lambda}\Hilb{n})$ of the
  tangent space at the fixed point $I_\lambda$ is given by
  \begin{equation*}
    e(T_{I_\lambda}\Hilb{n}) = \prod_{s\in\Lambda}
    \left((l(s)+1)\ve_1 - a(s) \ve_2\right)
    \left(-l(s)\ve_1 + (a(s)+1) \ve_2\right).
  \end{equation*}
\end{NB}
\end{Corollary}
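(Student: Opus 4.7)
The plan is to read off the equivariant Euler class directly from the character formula in \propref{prop:char}, once we identify which summands contribute to $T^{\le 0}_{I_\lambda}$. Recall the relevant $\CC^*$-action is $t\cdot(z,\xi) = (z,t\xi)$, i.e.\ it is the second factor of $T$; so if a weight of $T_{I_\lambda}\Hilb{n}$ is $t_1^m t_2^n$, then its weight under this $\CC^*$ is exactly $n$. Therefore $T^{\le 0}_{I_\lambda}$ is the sum of weight subspaces whose monomial $t_1^m t_2^n$ satisfies $n\le 0$.

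Next I would apply this criterion to the two sums appearing in \propref{prop:char}. For a box $s\in\lambda$, the first summand $t_1^{l(s)+1}t_2^{-a(s)}$ has $t_2$-exponent $-a(s)\le 0$, so it always lies in $T^{\le 0}_{I_\lambda}$. The second summand $t_1^{-l(s)}t_2^{a(s)+1}$ has $t_2$-exponent $a(s)+1\ge 1$, so it always lies in the complementary part $T^{>0}_{I_\lambda}$. Consequently
\begin{equation*}
  \ch T^{\le 0}_{I_\lambda} = \sum_{s\in\lambda} t_1^{l(s)+1}t_2^{-a(s)}.
\end{equation*}

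Finally, I would use the general rule recalled earlier in the paper: for a $T$-representation, the equivariant first Chern class of the weight $t_1^m t_2^n$ one-dimensional summand is $m\ve_1 + n\ve_2$, and the equivariant Euler class of a vector bundle (here a representation of $T$) is the product of the first Chern classes of its weight lines. Applying this to the character above yields immediately
\begin{equation*}
  e(T^{\le 0}_{I_\lambda}) = \prod_{s\in\lambda}\bigl((l(s)+1)\ve_1 - a(s)\ve_2\bigr),
\end{equation*}
which is the desired formula.

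There is no real obstacle here; the only point that deserves care is the sign/identification issue in step one, namely making sure we use the correct $\CC^*\subset T$ (the one that appeared in the definitions of $W^-_\lambda$ and $T^{\le 0}_{I_\lambda}$ in \subsecref{sec:symmetric-products-x}) and that we read weights multiplicatively as $t_1^m t_2^n\leftrightarrow m\ve_1+n\ve_2$, consistently with the convention fixed in \eqref{eq:20}. Once that is checked, the rest is a one-line consequence of \propref{prop:char}.
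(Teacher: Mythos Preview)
Your proposal is correct and matches the paper's own approach: the corollary is stated without explicit proof, since the identification of $T^{\le 0}_{I_\lambda}$ with the summands $t_1^{l(s)+1}t_2^{-a(s)}$ was already made in the proof of the proposition about $W^-_\lambda$ (see the remark ``it corresponds to $\sum t_1^{l(s)+1} t_2^{-a(s)}$'' in \subsecref{sec:symmetric-products-x}), after which the Euler class formula is immediate from \propref{prop:char}.
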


Comparing this expression with \cite[VI, (10.21)]{Mac}, we
get
\begin{equation*}
    [I_\lambda]
    \begin{NB}
        =  e(T^{\le 0}_{I_\lambda}) P^{(\bk)}_\lambda
        = \ve_1^{|\lambda|} c_\lambda(\bk) P^{(\bk)}_\lambda
    \end{NB}%
    = \ve_1^{|\lambda|} J_\lambda^{(\bk)},
\end{equation*}
where $J_\lambda^{(\bk)}$ is the integral form of the Jack symmetric
function $P^{(\bk)}_\lambda$, defined in \cite[VI, (10.22)]{Mac}.

\begin{NB}
In particular, if we set $\ve_1 = -\ve_2 = \hbar$, we get
\begin{equation*}
    e(T_{I_\lambda}\Hilb{n}) = (-1)^{n} \hbar^{2n}
    \prod_{s\in\Lambda} h(s)^2.
\end{equation*}
The product of the hook lengths $h(s)$, denoted by $h(\lambda)$ in
\cite[I.1.\ Ex.10]{Mac}, appears in the representation theory of
symmetric groups $\mathfrak S_n$. For example, it is known that
$n!/h(\lambda)$ is equal to the dimension of the irreducible
representation $\rho_\lambda$ of $\mathfrak S_n$ corresponding to the
partition $\lambda$. (See \cite[I.5.\ Ex.2 and I.(7.6)]{Mac}.)
\end{NB}

As a corollary of the above computation, we give a geometric proof of
the norm formula \cite[VI, (10.16)]{Mac}.

\begin{Proposition}
\begin{equation*}
  \la P^{(\bk)}_\lambda, P^{(\bk)}_\lambda\ra
  = \prod_{s\in\lambda} 
  \frac{l(s) + (a(s)+1) \bk}
  {l(s)+1 + a(s) \bk}.
\end{equation*}
\end{Proposition}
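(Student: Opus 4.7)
The plan is to reduce the computation to the fixed point formula and then substitute the explicit expressions for the equivariant Euler classes from \propref{prop:char} and \corref{cor:Euler}.

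First I would use \thmref{thm:Jack}: under the isomorphism $H^T_*((\Hilb{n})^T)_\loc \xrightarrow{\zeta_*\iota_*} H^{T,lf}_*(\Hilb{n})_\loc$, the Jack symmetric function $P^{(\bk)}_\lambda$ corresponds to $\frac{1}{e(T^{\le 0}_{I_\lambda})}[I_\lambda]$. So the norm $\la P^{(\bk)}_\lambda, P^{(\bk)}_\lambda\ra$, which is defined by \eqref{eq:14} on the Hilbert scheme side, can be transferred to the fixed point side.

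Next I would apply the Atiyah-Bott fixed point formula (\thmref{thm:fixed}) to express the pairing \eqref{eq:14} through fixed point contributions. Writing $\omega = \zeta_*\iota_*\tilde c$ with $\tilde c = c_\mu[I_\mu]$ supported at a single fixed point, the restriction $i_\mu^*\omega$ equals $c_\mu\, e(T_{I_\mu}\Hilb{n})$ by self-intersection, and $i_\nu^*\omega = 0$ for $\nu\ne\mu$. Combining this with \thmref{thm:fixed} shows that under $\zeta_*\iota_*$ the pairing \eqref{eq:14} transports to the diagonal pairing
\begin{equation*}
  \la c_\mu [I_\mu], c_\nu [I_\nu]\ra = (-1)^n \delta_{\mu\nu}\, c_\mu c_\nu\, e(T_{I_\mu}\Hilb{n})
\end{equation*}
on $H^T_*((\Hilb{n})^T)_\loc$. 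Applying this with $\tilde c = \frac{1}{e(T^{\le 0}_{I_\lambda})}[I_\lambda]$ and using the splitting $T_{I_\lambda}\Hilb{n} = T^{\le 0}_{I_\lambda} \oplus T^{>0}_{I_\lambda}$, I obtain
\begin{equation*}
  \la P^{(\bk)}_\lambda, P^{(\bk)}_\lambda\ra
  = (-1)^n\, \frac{e(T_{I_\lambda}\Hilb{n})}{e(T^{\le 0}_{I_\lambda})^2}
  = (-1)^n\, \frac{e(T^{>0}_{I_\lambda})}{e(T^{\le 0}_{I_\lambda})}.
\end{equation*}

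It then remains to substitute the explicit weight decomposition from \propref{prop:char}. The nonpositive weights (with respect to $t\cdot(z,\xi)=(z,t\xi)$) contribute $\prod_{s\in\lambda}\bigl((l(s)+1)\ve_1-a(s)\ve_2\bigr)$ by \corref{cor:Euler}, while the positive weights contribute $\prod_{s\in\lambda}\bigl(-l(s)\ve_1+(a(s)+1)\ve_2\bigr)$. Setting $\bk = -\ve_2/\ve_1$, factoring out $\ve_1$ from each factor and tracking the $|\lambda|$ signs from the positive block gives
\begin{equation*}
  \frac{e(T^{>0}_{I_\lambda})}{e(T^{\le 0}_{I_\lambda})}
  = (-1)^{|\lambda|}\prod_{s\in\lambda}\frac{l(s)+(a(s)+1)\bk}{l(s)+1+a(s)\bk},
\end{equation*}
and since $n=|\lambda|$ the two sign factors cancel, yielding the asserted norm formula.

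The essential content is already packaged in \thmref{thm:Jack} and \thmref{thm:fixed}; the only genuine risk is a sign error in transporting the pairing \eqref{eq:14} to the fixed point side (the $(-1)^n$ convention) and in the weight-by-weight substitution. I expect no real obstacle beyond careful bookkeeping of these signs.
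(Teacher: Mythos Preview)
Your proposal is correct and follows essentially the same route as the paper: identify $P^{(\bk)}_\lambda$ with $\frac{1}{e(T^{\le 0}_{I_\lambda})}[I_\lambda]$ via \thmref{thm:Jack}, observe that the pairing \eqref{eq:14} on the fixed point classes gives $(-1)^n e(T_{I_\lambda}\Hilb{n})/e(T^{\le 0}_{I_\lambda})^2$, and then substitute the explicit weights from \propref{prop:char} and \corref{cor:Euler}. The paper's proof is terser (it does not spell out the transport of the pairing via \thmref{thm:fixed}), but the content and the sign bookkeeping you describe are exactly what is needed.
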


\begin{proof}
This is a direct consequence of
\begin{equation*}
  \la \frac1{e(T^{\le 0}_{I_\lambda})}[I_\lambda], 
  \frac1{e(T^{\le 0}_{I_\lambda})}[I_\lambda]\ra
  = (-1)^n \frac{e(T_{I_\lambda}\Hilb{n})}{e(T^{\le 0}_{I_\lambda})^2}
\end{equation*}
and the expression of $e(T^{\le 0}_{I_\lambda})$ in \corref{cor:Euler}.
\begin{NB}
  \begin{equation*}
  = \prod_{s\in\lambda} 
  \frac{\left(l(s)\ve_1 - (a(s)+1) \ve_2\right)}
  {\left((l(s)+1)\ve_1 - a(s) \ve_2\right)}
  \end{equation*}
\end{NB}%
\end{proof}

\begin{NB}
Let us denote $\la P^{(\bk)}_\lambda, P^{(\bk)}_\lambda\ra^{-1}$ by
$b_\lambda^{(\bk)}$ (see \cite[VI, (10.16)]{Mac}).
\end{NB}

\subsection{Nested Hilbert scheme and Pieri formula}

Let us first explain the compatibility between the convolution product
and the fixed point formula.

Suppose $M_1$, $M_2$ are smooth $T$-varieties and $Z\subset M_1\times
M_2$ is a nonsingular $T$-invariant subvariety. We further assume that
the second projection $p_1\colon Z\to M_1$ is proper, and hence the
convolution product
\begin{equation*}
  H^T_*(M_2)\to H^T_*(M_1); \bullet \mapsto p_{1*}(p_2^*(\bullet))
\end{equation*}
is well-defined. Let $p_1^T$, $p_2^T$ denote the restriction of the
first and second projections to the fixed point set $Z^T$.
respectively.

Let $i_1$, $i_2$, $i_Z$ denote the inclusions of fixed point sets
$M_1^T$, $M_2^T$, $Z^T$ to $M_1$, $M_2$, $Z$ respectively. Let us
denote the normal bundles by $N_1$, $N_2$, $N_Z$ respectively. We
understand that they are union of normal bundles of connected
components of $M_1^T$, $M_2^T$, $Z^T$. We do not introduce subscript
$\alpha$ unlike in \subsecref{sec:fixed}. From \thmref{thm:inverse} we
obtain
\begin{Lemma}
  The following equality holds in $\Hom(H^T_*(M_1^T)_\loc,
  H^T_*(M_2)_\loc)$.
  \begin{equation*}
    p_{1*} p_2^* i_{2*} \frac1{e(N_2)}
    = i_{1*} p_{1*}^T \frac1{e(N_Z)} p_2^{T*}.
  \end{equation*}
\end{Lemma}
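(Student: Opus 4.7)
The plan is to apply the fixed-point formula (\thmref{thm:inverse}) on $Z$ itself and then to transport the resulting identity to $M_1$ and $M_2$ via the two obvious commutative squares
\begin{equation*}
  \begin{array}{ccc}
    Z^T & \xrightarrow{\;i_Z\;} & Z\\
    {\scriptstyle p_j^T}\downarrow & & \downarrow{\scriptstyle p_j}\\
    M_j^T & \xrightarrow{\;i_j\;} & M_j
  \end{array}
  \qquad(j=1,2).
\end{equation*}

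I would fix $\omega\in H^T_*(M_2^T)_\loc$ and set $\eta\defeq p_2^*\,i_{2*}\tfrac{1}{e(N_2)}\omega\in H^T_*(Z)_\loc$. Since $Z$ is smooth, \corref{cor:i*} together with \thmref{thm:inverse} tells me that $i_{Z*}\colon H^T_*(Z^T)_\loc\to H^T_*(Z)_\loc$ is invertible after localization with inverse $\tfrac{1}{e(N_Z)}\,i_Z^*$, so I may write
\begin{equation*}
  \eta \;=\; i_{Z*}\,\tfrac{1}{e(N_Z)}\,i_Z^*\,\eta.
\end{equation*}

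Next I would simplify $i_Z^*\eta$. Contravariant functoriality of pullback along the square for $j=2$ gives $i_Z^*p_2^* = p_2^{T*}i_2^*$, while \lemref{lem:1} gives the self-intersection identity $i_2^*i_{2*}=e(N_2)\cap\bullet$. Combining these,
\begin{equation*}
  i_Z^*\eta \;=\; p_2^{T*}\,i_2^*\,i_{2*}\,\tfrac{1}{e(N_2)}\omega \;=\; p_2^{T*}\omega.
\end{equation*}
Finally, applying $p_{1*}$ and using covariant functoriality $p_{1*}i_{Z*}=i_{1*}p_{1*}^T$ coming from $p_1\circ i_Z=i_1\circ p_1^T$, I obtain
\begin{equation*}
  p_{1*}p_2^*i_{2*}\tfrac{1}{e(N_2)}\omega
  \;=\; p_{1*}\eta
  \;=\; i_{1*}\,p_{1*}^T\,\tfrac{1}{e(N_Z)}\,p_2^{T*}\omega,
\end{equation*}
which is exactly the required identity.

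The only step that is not purely formal is the identity $i_Z^*p_2^* = p_2^{T*}i_2^*$ on equivariant homology, since $p_2^{T*}$ is the restriction of $p_2^*$ to the fixed-point loci in the convolution/Poincar\'e-duality framework of \secref{sec:equiv}. Because $M_2$, $Z$, $M_2^T$, $Z^T$ are all smooth, Poincar\'e duality converts this into the contravariant functoriality of cohomological pullback along the commutative square, which is automatic. I expect this bookkeeping point, rather than any genuine computation, to be the main thing that needs care; the rest of the argument is a straightforward application of the fixed-point formula and the self-intersection identity.
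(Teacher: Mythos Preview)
Your proof is correct and essentially identical to the paper's argument: the paper also starts from the commutativity $i_Z^*p_2^* = p_2^{T*}i_2^*$, applies \thmref{thm:inverse} (equivalently the self-intersection identity of \lemref{lem:1}) to invert $i_Z^*$ and $i_2^*$, and then uses $p_{1*}i_{Z*}=i_{1*}p_{1*}^T$. Your write-up is simply a more detailed, element-wise version of the paper's one-line sketch.
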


In fact, we apply \thmref{thm:inverse} to $i_Z^* p_2^{*} = p_2^{T*}
i_2^*$ to invert $i_Z^*$, $i_1^*$. Then we use $p_{1*} i_{Z*} = i_{1*}
p_{1*}^T$. It is suggestive to note that
\begin{equation*}
  \frac{e(N_Z)}{p_2^{T*} e(N_2)}
\end{equation*}
is the equivariant Euler class of the virtual normal bundle 
$N_Z - p_2^{T*} N_2$ of fibers.

We apply this lemma to $P[1]\subset \Hilb{n}\times\Hilb{n-1}\times X$
in \subsecref{sec:heisenberg-algebra}, which realize the operator
$P_{-1}(\alpha)$.
This $P[1]$ is known to be nonsingular, while other $P[i]$ with $i>1$
are singular except for small $n$. The smoothness was proved in
\cite{Na-alg} in the context of quiver varieties, and also
independently in \cite{Cheah-nest,MR1609203} in the context of Hilbert
schemes. It is called the {\it Hecke correspondence\/}
\cite{Na-quiver,Na-alg}, and the {\it nested Hilbert scheme\/} in
\cite{Cheah-nest} and also various other literature. It has been used
to prove many statements on Hilbert schemes by an induction on $n$.
See \cite{MR1432198,MR1795551} for example.

Let us briefly review the proof of smoothness in
\cite[\S5]{Na-alg}. The proof works for higher rank case.
We represent $\Hilb{n}$ and $\Hilb{n-1}$ as spaces of quadruples
$(B_1,B_2,i,j)$ as in \cite[Ch.~2]{Lecture}: Let $W=\CC^r$, $V^1 =
\CC^n$, $V^2 = \CC^{n-1}$. Then the framed moduli spaces $M(r,n)$
($\alpha=1$), $M(r,n-1)$ ($\alpha=2$) of torsion free sheaves
$(E,\varphi)$ over $\proj^2$ of rank $r$, $c_2 = n$, $n-1$ are
respectively spaces of quadruples
$(B^\alpha_1,B^\alpha_2,i^\alpha,j^\alpha)$ satisfying
\begin{itemize}
\item $B^\alpha_1, B^\alpha_2\in \End(V^\alpha)$, $i^\alpha\colon W\to
  V^\alpha$, $j^\alpha\colon V^\alpha\to W$,
\item $[B^\alpha_1,B^\alpha_2]+i^\alpha j^\alpha = 0$,
\item (stability) there is no proper subspace of $V^\alpha$ containing
  $i^\alpha(W)$ and is invariant under $B^\alpha_1$, $B^\alpha_2$
\end{itemize}
modulo the conjugation under $\GL(V^\alpha)$. We consider
$V^\alpha$ as vector bundles over $M(r,n)$, $M(r,n-1)$.
We then form a complex of vector bundles over $M(r,n)\times
M(r,{n-1})$:
\begin{equation}\label{eq:8}
  \Hom(V^1,V^2)
  \overset{a}{\longrightarrow}
  \begin{matrix}
    \Hom(V^1,Q\otimes V^2) \\ \oplus\\
    \Hom(W,V^2) \\ \oplus \\
    \Hom(V^1,\Wedge^2 Q\otimes W)
  \end{matrix}
  \overset{b}{\longrightarrow}
  \begin{matrix}
      \Wedge^2 Q \otimes\Hom(V^1,V^2)\\\oplus \\\Wedge^2 Q\otimes
      \shfO
  \end{matrix},
\end{equation}
where $Q=\CC^2$, and $a$, $b$ are defined by
\begin{equation*}
  \begin{split}
  & a(\xi\oplus\lambda) =
    \left(\xi B_1^1 - B_1^2 \xi \right)\oplus
    \left(\xi B_2^1 - B_2^2 \xi \right)\oplus
    \xi i^1\oplus
    \left(- j^2 \xi\right),
\\
   & b(C_1 \oplus C_2 \oplus I \oplus J)
   =
   \begin{pmatrix}
   B^2_1 C_2 - C_2 B^1_1 + C_1 B^1_2 - B^2_2 C_1 + i^2 J + I j^1\\
   \tr (i^1 J) + \tr(I j^2)
   \end{pmatrix}.
  \end{split}
\end{equation*}
This is a complex thanks to $[B^\alpha_1,B^\alpha_2]+i^\alpha j^\alpha = 0$
and $\tr(i^1 j^2\xi) = \tr(\xi i^1j^2)$.

\begin{Lemma}
  Consider $a$ and $b$ as linear map between fibers of vector
  bundles. Then $a$ is injective and $b$ is surjective at any point in
  $M(r,n)\times M(r,n-1)$.
\end{Lemma}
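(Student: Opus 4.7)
My plan for the first half is a direct stability argument. If $a(\xi)=0$ for $\xi\in\Hom(V^1,V^2)$, the four components of the image give
\begin{equation*}
  \xi B^1_1 = B^2_1\xi,\quad \xi B^1_2 = B^2_2\xi,\quad \xi i^1=0,\quad j^2\xi=0.
\end{equation*}
I would then set $K=\ker\xi\subset V^1$: the first two identities show $K$ is invariant under $B^1_1$ and $B^1_2$, while $\xi i^1=0$ gives $i^1(W)\subset K$. The stability condition for $(B^1_1,B^1_2,i^1,j^1)\in M(r,n)$ then forces $K=V^1$, i.e.\ $\xi=0$. (Note that $j^2\xi=0$ is not used.)

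\textbf{Surjectivity of $b$.} For the second half I would argue by duality. An element of $\Coker b$, paired via the trace pairings on $\Hom$ spaces and a chosen trivialization of $\Wedge^2 Q$, corresponds to a pair $(\eta,\lambda)\in\Hom(V^2,V^1)\oplus\CC$ such that $\tr(\eta\cdot b(C_1,C_2,I,J))+\lambda(\tr(i^1J)+\tr(Ij^2))=0$ for all $(C_1,C_2,I,J)$. Collecting the coefficient of each variable and using cyclicity of trace yields
\begin{equation*}
  B^1_1\eta=\eta B^2_1,\quad B^1_2\eta=\eta B^2_2,\quad \eta i^2+\lambda i^1=0,\quad j^1\eta+\lambda j^2=0.
\end{equation*}
If $\lambda\neq 0$, then after rescaling $i^1=-\lambda^{-1}\eta i^2\in\Ima\eta$, and the first two equations show that $\Ima\eta\subset V^1$ is $B^1$-invariant; stability of the data on $M(r,n)$ would then force $\Ima\eta=V^1$, but this contradicts the dimension inequality $\dim V^2=n-1<n=\dim V^1$. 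So necessarily $\lambda=0$, whereupon $\eta i^2=0$ means $\ker\eta\subset V^2$ contains $i^2(W)$ and is $B^2$-invariant; stability on $M(r,n-1)$ then gives $\eta=0$.

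The main obstacle in executing the plan is the bookkeeping for the transposes. One has to identify the dual of each summand of the target of $b$ carefully, track the twist by the one-dimensional $\Wedge^2 Q$, and apply cyclicity of the trace across rectangular $\Hom$ spaces in order to extract the four equations above in a clean form. Once that is done, the argument is essentially symmetric to the proof of injectivity of $a$, with the crucial extra input being the strict dimension inequality $\dim V^2<\dim V^1$ that rules out the case $\lambda\neq 0$.
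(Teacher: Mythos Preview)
Your proposal is correct and follows essentially the same approach as the paper: the injectivity of $a$ via the stability condition applied to $\Ker\xi\subset V^1$, and the surjectivity of $b$ by dualizing to obtain $(\eta,\lambda)\in\Hom(V^2,V^1)\oplus\CC$, deriving the four intertwining relations, ruling out $\lambda\neq 0$ by the dimension inequality $\dim V^2<\dim V^1$, and then concluding $\eta=0$ from stability on $M(r,n-1)$. The paper refers to \cite[Lemma~5.2]{Na-alg} for this argument (with $V^1,V^2$ swapped and the opposite stability), which is exactly what you have written out.
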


\begin{proof}
  \begin{NB}
  Suppose that $\xi\in\Ker a$. Then the stability condition applied to
  $\Ker\xi$ implies $\Ker\xi = V^1$, i.e., $\xi = 0$.

  Suppose that $b$ is not surjective. Then we have
  $\zeta\oplus\lambda\in \Hom(V^2,V^1)\oplus\CC$ orthogonal to the
  image of $b$. Therefore
  \begin{equation*}
    \zeta B^2_1 = B^1_1 \zeta, \quad
    \zeta B^2_2 = B^1_2 \zeta, \quad
    \zeta i^2 + \lambda i^1 = 0, \quad
    j^1\zeta + \lambda j^2 = 0.
  \end{equation*}
  If $\lambda\neq 0$, the image of $\zeta$, from the stability
  condition for $\alpha=1$, must be the whole $V^1$. But this is not
  possible as $\dim V^2 < \dim V^1$. Therefore $\lambda = 0$. Then
  $\Ker\zeta$, from the stability condition for $\alpha=2$, must be
  $V^2$. Therefore $\zeta = 0$.
  \end{NB}%
  See \cite[Lemma~5.2]{Na-alg}. Note that $V^1$, $V^2$ are swapped and
  the stability condition is the opposite there. Therefore we need to
  take transposes of $B^\alpha_1,B^\alpha_2, i^\alpha, j^\alpha$.
\end{proof}

Therefore $\Ker b/\Ima a$ forms a vector bundle over $M(r,n)\times
M(r,n-1)$ of rank $2rn-r-1$.

If we omit the factor $\shfO$ in the third term in \eqref{eq:8}, the
complex gives $\Ext^1(E_1, E_2(-\linf))$, where $E_1$, $E_2$ are
torsion free sheaves in $M(r,n)$ and $M(r,n-1)$ respectively. This is
clear from the proof of the description for $M(r,n)$, $M(r,n-1)$. It
can be also checked as follows: we consider $M(r,n)\times M(r,n-1)$ as
a component of $\CC^*$-fixed point in $M(2r,2n-1)$, where $\CC^*$ acts
on $\CC^{2r} = \CC^r\oplus \CC^r$ with weight $0$ on the first factor
and $1$ on the second. The tangent space of $M(2r, 2n-1)$ at
$E_1\oplus E_2$ is $\Ext^1(E_1\oplus E_2, (E_1\oplus E_2)(-\linf))$,
and is a $\CC^*$-module. The factor $\Ext^1(E_1, E_2(-\linf))$ is the
weight $1$ subspace. On the other hand, we compute the tangent space
in terms of quadruples, and find that \eqref{eq:8} without $\shfO$ is
the weight $1$ subspace.

We define a section $s$ of $\Ker b/\Ima a$ by
\begin{equation*}
  s = \left(0\oplus i^2\oplus (-j^1)\right) \pmod{\Ima a}.
\end{equation*}
Then $s$ vanishes if and only if there is $\xi$ such that
\begin{equation*}
  \xi B^1_1 = B^2_1\xi, \quad \xi B^1_2 = B^2_2\xi, \quad\xi i^1 = i^2,
  \quad j^1 = j^2\xi.
\end{equation*}
This means we have a homomorphism $E_1\to E_2$ which is the identity
at $\linf$. It is injective, as a sheaf homomorphism, since it is so
at $\linf$.
For $r=1$, this means that $I_1\subset I_2$. The stability condition
implies $\xi$ is surjective, hence $\Ker\xi$ is $1$-dimensional. The
homomorphisms $B^1_1$, $B^2_2$ preserve $\Ker\xi$, hence give an
element in $\CC^2 = X$. Thus $\operatorname{Zero}(s) = P[1]$ if $r=1$,
and its natural higher rank generalization for $r>1$.

Now the smoothness of $\operatorname{Zero}(s)$ follows from
\begin{Lemma}
  Take a connection $\nabla$ on $\Ker b/\Ima a$ and consider the
  differential $\nabla s$. It is surjective on
  $\operatorname{Zero}(s)$.
\end{Lemma}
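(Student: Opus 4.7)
The plan is to compute $\nabla s$ explicitly at a zero and deduce surjectivity from the stability conditions, in the same spirit as the previous lemma. Fix a point $(E_1,E_2)\in\operatorname{Zero}(s)$ represented by quadruples $(B^\alpha_1,B^\alpha_2,i^\alpha,j^\alpha)_{\alpha=1,2}$, together with the unique intertwiner $\xi\in\Hom(V^1,V^2)$ realizing the inclusion $E_1\hookrightarrow E_2$, i.e.\ $\xi B^1_k=B^2_k\xi$, $\xi i^1=i^2$, $j^1=j^2\xi$. Work in a local trivialization of $V^1$, $V^2$ so that a tangent vector to $M(r,n)\times M(r,n-1)$ is a first-order deformation $(\delta B^\alpha_k,\delta i^\alpha,\delta j^\alpha)$ satisfying the linearized moment-map equations, considered modulo linearized $\GL(V^\alpha)$-conjugation.

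Since the lift $\tilde s=(0\oplus i^2\oplus(-j^1))$ of $s$ lies in $\Ker b$ at every point (the first component of $b(\tilde s)$ cancels, and the trace component vanishes by $\tr(i^\alpha j^\alpha)=0$), a direct computation using the linearized moment-map relations and the cyclicity of trace shows that $\delta\tilde s=(0\oplus\delta i^2\oplus(-\delta j^1))$ again lies in $\Ker b$. Its class modulo $\Ima a$ is, by definition, $\nabla s$ applied to $(\delta B^\alpha_k,\delta i^\alpha,\delta j^\alpha)$; note that this class does not depend on the choice of splitting since any correction landing in $\Ima a$ is killed in the quotient.

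To prove surjectivity, I would argue as follows. Given any class $[(C_1\oplus C_2\oplus I\oplus J)]\in\Ker b/\Ima a$, it suffices to find $\xi'\in\Hom(V^1,V^2)$ and a valid tangent vector with $\delta i^2=I+\xi' i^1$, $\delta j^1=-J+j^2\xi'$, and $-C_k=\xi' B^1_k-B^2_k\xi'$, because then the difference between $\delta\tilde s$ and the given cocycle is exactly $a(\xi')$. Solvability of the first set of equations in $\xi'$ and solvability of the linearized moment-map equations in the remaining $(\delta B^\alpha_k,\delta i^1,\delta j^2)$ are each controlled by the cokernel of an explicit map whose transpose fits into a diagram exactly of the type handled in the previous lemma.

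The main obstacle, and the only nontrivial step, is verifying that these transposes have vanishing kernel. Following the pattern of the previous lemma, I would translate each nonzero element of the kernel of the transpose into a tuple $(\zeta_k,\lambda)\in\Hom(V^2,V^1)^{\oplus 2}\oplus\CC$ satisfying $\zeta_k B^2_m=B^1_m\zeta_k$ (plus compatibility with $i^\alpha$, $j^\alpha$, $\xi$), and then invoke the stability of both $(B^1_k,i^1,j^1)$ and $(B^2_k,i^2,j^2)$: the image of $\zeta_k$ is $B^1_m$-stable and contains $i^1(W)$, hence is all of $V^1$, which is incompatible with $\dim V^2<\dim V^1$ unless $\zeta_k=0$ and $\lambda=0$. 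Once this vanishing is established, surjectivity of $\nabla s$ on $\operatorname{Zero}(s)$ is immediate, and the smoothness of $\operatorname{Zero}(s)=P[1]$ (in the rank one case) follows from the implicit function theorem applied to the zero locus of a section with surjective differential.
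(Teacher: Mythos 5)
The paper does not prove this lemma; it only cites \cite[Th.~5.7]{Na-alg}. Your attempt to supply an actual argument has a genuine error at its first step, and the error propagates fatally. The lift $\tilde s=(0\oplus i^2\oplus(-j^1))$ does lie in $\Ker b$ everywhere, but on $\operatorname{Zero}(s)$ it does \emph{not} vanish: vanishing of $s$ means $\tilde s=a(\xi)$ with $\xi\neq 0$ (indeed $\xi$ is surjective), so $\tilde s=a(\xi)\neq 0$ there because $a$ is injective. Consequently $\nabla s(v)$ is \emph{not} $[\delta\tilde s(v)]$. To compute the intrinsic derivative at a zero you must use a lift that actually vanishes at the point, e.g.\ $\hat s=\tilde s-a(\xi)$ with $\xi$ extended locally; then
\begin{equation*}
  \nabla s(v)=\bigl[\,\delta\tilde s(v)-(\delta a)(v)(\xi)\,\bigr]
  =\bigl[\,-(\xi\,\delta B^1_1-\delta B^2_1\,\xi)\oplus
  -(\xi\,\delta B^1_2-\delta B^2_2\,\xi)\oplus(\delta i^2-\xi\,\delta i^1)\oplus(-\delta j^1+\delta j^2\,\xi)\,\bigr].
\end{equation*}
Relatedly, your claim that $\delta\tilde s$ lies in $\Ker b$ is false: differentiating $b(\tilde s)\equiv 0$ gives $b(\delta\tilde s)=-(\delta b)(\tilde s)$, whose first component is $\delta i^2 j^1-i^2\delta j^1\neq 0$ in general, so $[\delta\tilde s]$ is not even a well-defined element of $\Ker b/\Ima a$.

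This is not a repairable bookkeeping slip within your scheme: with your formula the first two slots of the image of $\nabla s$ vanish, so surjectivity would force every class in $\Ker b/\Ima a$ to admit a representative with $C_1=C_2=0$, equivalently force solvability of $-C_k=\xi'B^1_k-B^2_k\xi'$ ($k=1,2$) for arbitrary $(C_1,C_2)$ occurring in $\Ker b$. That is $2\dim\Hom(V^1,V^2)$ equations in $\dim\Hom(V^1,V^2)$ unknowns and fails for dimension reasons alone (already for $r=1$, where $j^1=j^2=0$, the classes with such representatives form a subspace of dimension at most $n-1$ inside a bundle of rank $2n-2$); no stability argument can rescue an overdetermined linear system. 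The terms $\xi\,\delta B^\alpha_k$, $\xi\,\delta i^1$, $\delta j^2\,\xi$ that you dropped are precisely the extra freedom that makes surjectivity true: the correct proof pairs a putative element of the cokernel with the image of the corrected $\nabla s$, extracts homomorphisms intertwining the two quadruples through $\xi$, and kills them using stability of both $(B^1,i^1,j^1)$ and $(B^2,i^2,j^2)$ --- the part of your outline concerning $(\zeta_k,\lambda)$ is in the right spirit, but it must be run against the correct differential.
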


See \cite[Th.~5.7]{Na-alg} for the proof. Therefore
$\operatorname{Zero}(s)$ is nonsingular of dimension $2rn-r+1 =
\frac12 \dim (M(r,n)\times M(r,n-1)\times X)$.

Now assume $r=1$ and consider a $T^2$-action on $P[1]$ induced by
$(t_1,t_2)\cdot (z,\xi) = (t_1 z, t_2 \xi)$. The fixed points are
parametrized by pairs of Young diagrams $(\lambda,\mu)$ of size $n$
and $n-1$ respectively such that $\mu$ is contained in $\lambda$,
i.e., $\lambda_i\ge \mu_i$ for all $i\ge 1$. Since the number of boxes
differs by $1$, the skew diagram $\lambda - \mu$ consists of a single
box.

Using the complex \eqref{eq:8}, we can compute the character of the
tangent space at $(I_\lambda,I_\mu)$. We consider $V^1$, $V^2$ as
$T$-modules according to $\lambda$, $\mu$, and $Q$ as the natural
$T$-module.
The detail is given in \cite[Prop.~5.2]{perv2}. (Note that $\widehat
M^0(c)$ has no $\CC^2$-factor, so $t_1+t_2$ below is not present there.)

\begin{Proposition}
  \begin{equation*}
    \ch T_{(I_\mu,I_\lambda)} P[1] = t_1 + t_2 +
    \sum_{s\in\mu} \left(t_1^{-l_\lambda(s)} t_2^{a_\mu(s)+1} + 
    t_1^{l_\mu(s)+1} t_2^{-a_\lambda(s)}\right).
  \end{equation*}
\end{Proposition}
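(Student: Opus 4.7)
The plan is to exploit the realization, recalled just above, of $P[1]$ as the zero locus $\operatorname{Zero}(s)$ of a section $s$ of the vector bundle $\mathcal{E} := \Ker b/\Ima a$ over $M(1,n)\times M(1,n-1)$. Since $\nabla s$ is surjective on $\operatorname{Zero}(s)$, there is a $T$-equivariant short exact sequence of tangent spaces at the fixed point
$$0 \to T_{(I_\mu,I_\lambda)} P[1] \to T_{I_\lambda}\Hilb{n}\oplus T_{I_\mu}\Hilb{n-1} \xrightarrow{\nabla s} \mathcal{E}|_{(I_\lambda,I_\mu)} \to 0,$$
whence
$$\ch T_{(I_\mu,I_\lambda)} P[1] = \ch T_{I_\lambda}\Hilb{n} + \ch T_{I_\mu}\Hilb{n-1} - \ch \mathcal{E}|_{(I_\lambda,I_\mu)}.$$

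The first two summands on the right are supplied directly by \propref{prop:char}. For the third, I would compute $\ch\mathcal{E}$ at the fixed point as the Euler characteristic of the three-term complex \eqref{eq:8}: because $a$ is injective and $b$ is surjective everywhere, $\mathcal{E}$ is the only nontrivial cohomology, and its character equals the alternating sum of characters of the terms. To evaluate these I would substitute the known $T$-module structures at the fixed point, namely $\ch V^\nu = \sum_{s\in\nu} t_1^{l'(s)} t_2^{a'(s)}$ (from \subsecref{sec:torus}), $\ch W=1$, $\ch Q=t_1+t_2$, and $\ch \Wedge^2 Q=t_1t_2$. This produces a Laurent polynomial in $t_1,t_2$ depending on $\ch V^\lambda$ and $\ch V^\mu$, with a single factor of $(1-t_1)(1-t_2)$ appearing naturally.

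The main obstacle is the final combinatorial simplification: one must reduce this Laurent polynomial to the hook-style sum $\sum_{s\in\mu}(t_1^{-l_\lambda(s)}t_2^{a_\mu(s)+1}+t_1^{l_\mu(s)+1}t_2^{-a_\lambda(s)})$ plus the residual $t_1+t_2$. The key combinatorial input is the identity $\ch V^\lambda = \ch V^\mu + t_1^{i_0-1}t_2^{j_0-1}$ where $(i_0,j_0)$ is the unique box of $\lambda\setminus\mu$, together with the way arm and leg lengths are read off from the exponents appearing in $\ch V^\lambda$ and $\ch V^\mu$. The leftover $t_1+t_2$ is naturally interpreted as $\ch T_0 X$: it records that the support map $P[1]\to X$ contributes the tangent to $X$ at the origin, which coincides with the location of the extra box. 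The explicit cancellation is the substantive bookkeeping step, and is carried out in \cite[Prop.~5.2]{perv2} in the variant without the $X$-factor; here the only modification is to add back the $t_1+t_2$ coming from the $\CC^2$-factor which is absent there.
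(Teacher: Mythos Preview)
Your proposal is correct and follows essentially the same route as the paper: the paper also invokes the complex \eqref{eq:8}, treats $V^1$, $V^2$, $Q$ as $T$-modules at the fixed point, and defers the combinatorial simplification to \cite[Prop.~5.2]{perv2} with the same remark that the extra $t_1+t_2$ comes from the $\CC^2$-factor absent there. Your write-up simply spells out the exact sequence $0\to T_{(I_\mu,I_\lambda)}P[1]\to T_{I_\lambda}\Hilb{n}\oplus T_{I_\mu}\Hilb{n-1}\to\mathcal E|_{(I_\lambda,I_\mu)}\to 0$ that is implicit in the paper's reference; one small caution is to match the sign convention for $\ch V^\nu$ with the one used in the $c_1(\mathcal V)$ computation later in the paper.
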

Here the leg and arm lengths are considered with respect to either
$\lambda$ or $\mu$. We use the notation $l_\lambda(s)$, $l_\mu(s)$ to
indicate Young diagrams as subscripts.

Let
\begin{equation}
  b^{(\bk)}_\lambda(s) \defeq
  \frac{\ve_1(l_\lambda(s)+1)-\ve_2 a_\lambda(s)}
  {\ve_1 l_\lambda(s) -\ve_2(a_\lambda(s)+1)}
  =
  \frac{l_\lambda(s)+1 + \bk a_\lambda(s)}
  {l_\lambda(s) + \bk (a_\lambda(s)+1)}.
\end{equation}
See \cite[(10.10)]{Mac}.

Let us denote $R$ the set of boxes in $\mu$ which lies in the same row
with the box $\lambda - \mu$. For example, when $\lambda - \mu$ is the
box marked with $\hs$, $R$ consists of boxes with $\cs$:
\begin{equation*}
\Yvcentermath1
\young(\hf\hf,\hf\hf\hf,\cs\cs\cs\hs,\hf\hf\hf\hf\hf)
\end{equation*}
In the notation \cite[VI, \S6]{Mac}, it is $C_{\lambda/\mu} -
R_{\lambda/\mu}$. (Note that our Young diagram is rotated by $90^\circ$.)

If $s\in \mu \setminus R$, we have $l_\lambda(s) = l_\mu(s)$. If $s\in
R$, we have $l_\lambda(s) = l_\mu(s) + 1$ and $a_\lambda(s) =
a_\mu(s)$. Therefore
\begin{equation*}
  \begin{split}
  & e(T_{(I_\lambda,I_\mu)} P[1])
\\
  =\; & \ve_2 e(T^{>0}_{I_\mu}) e(T^{\le 0}_{I_\lambda})
  \prod_{s\in R}
  \frac{-\ve_1 l_\lambda(s) + \ve_2 (a_\lambda(s)+1)}
  {-\ve_1 l_\mu(s) + \ve_2 (a_\mu(s)+1)}
  \frac{\ve_1 (l_\mu(s) + 1) - \ve_2 a_\mu(s)}
  {\ve_1 (l_\lambda(s) + 1) - \ve_2 a_\lambda(s)}
\\
  =\; &
  \ve_2 e(T^{>0}_{I_\mu}) e(T^{\le 0}_{I_\lambda})
  \prod_{s\in R}
  \frac{b^{(\bk)}_\mu(s)}{b^{(\bk)}_\lambda(s)},
  \end{split}
\end{equation*}
where $T^{>0}_{I_\mu}$, $T^{\le 0}_{I_\lambda}$ are sum of positive
and negative weight spaces as before.

Since $P^{(\bk)}_\mu = \frac1{e(T^{\le 0}_{I_\mu})}[I_\mu]$ and $p_1 =
P_{-1}(\ve_2)$, we get

\begin{Theorem}
We have
    \begin{equation*}
        p_1 P_\mu^{(\bk)}
        = \sum_\lambda   \prod_{s\in R}
        \frac{b^{(\bk)}_\lambda(s)}{b^{(\bk)}_\mu(s)} P_\lambda^{(\bk)},
    \end{equation*}
    where the summation runs over $\lambda$ with $|\lambda|=|\mu|+1$,
    containing $\mu$.
\end{Theorem}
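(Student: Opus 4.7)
The proof will apply the fixed-point localization lemma stated at the beginning of this subsection to the correspondence $Z = P[1]$, which realizes $p_1 = \PP{\ve_2}{-1}$ as an operator $H^{T,lf}_*(\Hilb{|\mu|})_\loc \to H^{T,lf}_*(\Hilb{|\mu|+1})_\loc$. By \thmref{thm:Jack} we have $P_\mu^{(\bk)} = [I_\mu]/e(T^{\le 0}_{I_\mu})$, which is supported on the single torus fixed point $I_\mu$, and we want to express $p_1 P_\mu^{(\bk)}$ in the basis $\{P_\lambda^{(\bk)} = [I_\lambda]/e(T^{\le 0}_{I_\lambda})\}$ of $\bigoplus_\lambda \QQ(\ve_1,\ve_2)[I_\lambda]$ with $|\lambda| = |\mu|+1$.

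First I would identify which $\lambda$ can appear. Since $[I_\mu]$ is localized at $I_\mu \in \Hilb{|\mu|}$ and the factor $\ve_2 \in H^*_T(X)$ is effectively pushed through the fixed point $0 \in X$, the localization lemma forces the contributions to come only from $T$-fixed points of $P[1]$ lying over $(I_\mu,0) \in \Hilb{|\mu|} \times X$. These are exactly the triples $(I_\lambda, I_\mu, 0)$ with $\lambda \supset \mu$ and $\lambda/\mu$ a single box, and for each such $\lambda$ the contribution is a scalar multiple of $[I_\lambda]$.

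Next I would compute that scalar. The localization formula combines (i) the Euler class $e(T_{(I_\lambda,I_\mu,0)} P[1])$ in the denominator from the virtual normal bundle, (ii) the Euler class $e(T_{I_\mu}\Hilb{|\mu|}) \cdot e(T_0 X) = e(T^{>0}_{I_\mu}) e(T^{\le 0}_{I_\mu}) \cdot \ve_1\ve_2$ of the normal bundle of $(I_\mu,0)$ in $\Hilb{|\mu|}\times X$ in the numerator, (iii) the pairing factor contributed by $\ve_2 \in H^*_T(X)$, and (iv) the denominator $e(T^{\le 0}_{I_\mu})$ converting $[I_\mu]$ to $P_\mu^{(\bk)}$. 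Rewriting the result in terms of $P_\lambda^{(\bk)}$ introduces a further factor $e(T^{\le 0}_{I_\lambda})$. Plugging in the formula established just before the theorem,
\begin{equation*}
  e(T_{(I_\lambda,I_\mu)} P[1]) = \ve_2 \, e(T^{>0}_{I_\mu}) \, e(T^{\le 0}_{I_\lambda}) \prod_{s\in R} \frac{b^{(\bk)}_\mu(s)}{b^{(\bk)}_\lambda(s)},
\end{equation*}
one sees that the factors $\ve_2$, $e(T^{>0}_{I_\mu})$, and $e(T^{\le 0}_{I_\lambda})$ all cancel, leaving precisely $\prod_{s\in R} b^{(\bk)}_\lambda(s)/b^{(\bk)}_\mu(s)$ as the coefficient of $P_\lambda^{(\bk)}$.

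The main obstacle is purely bookkeeping: one must carefully reconcile the two ways in which $\ve_2$ enters — once as the Poincar\'e dual class defining the Heisenberg operator, and once through the equivariant Euler class of $T_0 X = \CC_{\ve_1}\oplus\CC_{\ve_2}$ appearing in $e(N_2)$ — and verify that $e(T^{>0}_{I_\mu}) \cdot e(T^{\le 0}_{I_\mu}) = e(T_{I_\mu}\Hilb{|\mu|})$ combines correctly with the rest to isolate the factor $\prod b_\mu(s)/b_\lambda(s)$ in the denominator. Once this balancing act is done, the theorem follows from the pre-computed Euler class formula with no further input.
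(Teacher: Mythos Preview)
Your proposal is correct and follows precisely the paper's approach: apply the localization lemma for convolutions to $Z = P[1]$, identify the contributing fixed points as pairs $(I_\lambda,I_\mu)$ with $\lambda\supset\mu$ differing by one box, and plug in the pre-computed formula
\[
  e(T_{(I_\lambda,I_\mu)} P[1]) = \ve_2 \, e(T^{>0}_{I_\mu}) \, e(T^{\le 0}_{I_\lambda}) \prod_{s\in R} \frac{b^{(\bk)}_\mu(s)}{b^{(\bk)}_\lambda(s)}
\]
to extract the coefficient of $P_\lambda^{(\bk)}$. The paper's own argument is exactly this, condensed into the single line ``Since $P^{(\bk)}_\mu = \frac1{e(T^{\le 0}_{I_\mu})}[I_\mu]$ and $p_1 = P_{-1}(\ve_2)$, we get\dots'' immediately preceding the theorem.
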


This is a special case of Pieri formulas for Jack symmetric functions
\cite[VI, (6.24)]{Mac}. When $\bk=1$, we have $b^{(\bk)}_\lambda(s) =
1$. Then the above is specialized to a classical Pieri formula for
Schur functions.
\begin{NB}
Since
    \begin{equation*}
        \frac{e(N_Z)}{p_2^{T*}e(N_2)} = 
        \frac{e(T_{(I_\lambda,I_\mu)}P[1])}{e(T_{I_\mu})}
        = \ve_2 \frac{e(T_{I_\lambda}^{\le 0})}{e(T_{I_\mu}^{\le 0})}
        \prod_{s\in R} \frac{b^{(\bk)}_\mu(s)}{b^{(\bk)}_\lambda(s)},
    \end{equation*}
we get
    \begin{equation*}
        P_{-1}(\ve_2) \frac1{e(T^{\le 0}_{I_\mu})}[I_\mu]
        = \sum_\lambda \frac1{e(T^{\le 0}_{I_\lambda})}[I_\lambda]
         \prod_{s\in R}
        \frac{b^{(\bk)}_\lambda(s)}{b^{(\bk)}_\mu(s)}.
    \end{equation*}
\end{NB}

\section{Virasoro algebra}\label{sec:Vir}

The goal of this section is to construct a representation of the
Virasoro algebra on $\bigoplus H^T_*(\Hilb{n})$ for $X=\CC^2$. Lehn
\cite{Lehn} constructed a representation for an arbitrary
quasiprojective surface $X$. Our proof is completely different from
Lehn's, and based on the geometric construction of Jack symmetric
functions in the previous section. The key is that the Hamiltonian
$\square^{\bk}$ gives the Virasoro algebra. On the other hand, Chern
classes of the tautological bundle $\mathcal V$ on $\Hilb{n}$ is
diagonalized by the fixed points base $\{[I_\lambda]\}$. Therefore it
is, more or less, obvious that $\square^{\bk}$ is related to $\mathcal
V$. As far as the author knows, this observation was not mentioned
explicitly in the literature before, but it has been well-known among
experts.

\subsection{Insertions and coproducts}

Note that our Heisenberg generators $\PP{\alpha}{i}$ are `colored' by
(co)homology classes $\alpha$ of the base space $X$. When we consider
a (normal ordered) product of Heisenberg operators, such as a vertex
operator, it is sometime natural to color it a single cohomology class
instead of multiples of them. Such a coloring is given naturally by
considering a coproduct on $H^*_T(X)$. This was first noticed by Lehn
\cite{Lehn} when he considered Virasoro generators, and subsequently
used by other people. Here we use a slightly modified version in
\cite{MO}. We only consider the case $X = \CC^2$, but note that the
framework makes sense for any $X$ with or without the $T$-action.

Let $\Delta\colon H^*_T(X)\to H^*_T(X)\otimes H^*_T(X)$ be the adjoint
of the cup product $\cup\colon H^*_T(X)\otimes H^*_T(X)\to H^*_T(X)$
with respect to $\la\ ,\ \ra$ \eqref{eq:18}, the negative of the
intersection pairing. More concretely, it is
$H^*_T(\mathrm{pt})$-linear, and hence is given by
\begin{equation*}
  \Delta(1) = - 1\otimes \operatorname{PD}[0] = - 1\otimes \ve_1\ve_2,
\end{equation*}
where $\operatorname{PD}[0]$ is the Poincar\'e dual of the class
$[0]$. We also consider its iteration
\begin{equation*}
  \Delta^n(1) = (-1)^n \otimes \underbrace{
  \ve_1\ve_2\otimes\cdots\otimes\ve_1\ve_2}_{\text{$n$ times}}.
\end{equation*}
Note that we can iterate $\Delta$ in various ways, say $(\Delta\otimes
1)\Delta$, $(1\otimes\Delta)\Delta$, but the result is the same thanks
to the coassociativity. Thus we have $\Delta^n(1) = (-\ve_1\ve_2)^n\cdot 
1\otimes\cdots\otimes 1$.

We define
\begin{equation*}
  (P_{m} P_{n})(\alpha) \defeq \sum_i \PP{\alpha_i'}{m}\PP{\alpha_i''}{n}
\end{equation*}
if $\Delta(\alpha) = \sum \alpha_i'\otimes\alpha_i''$. Similarly we
define 
\begin{equation*}
  \normal{P_m P_n}(\alpha) \defeq \sum_i 
  \normal{\PP{\alpha_i'}{m} \PP{\alpha_i''}{n}}.
\end{equation*}
Products of more than two operators are defined in the same way, using
$\Delta^n(\alpha)$.

\begin{Remark}
  Our $\Delta$ is the negative of $\delta\colon H^*_T(X)\to
  H^*_T(X)\otimes H^*_T(X)$, the pushforward homomorphism associated
  with the diagonal embedding, used in \cite[\S3.1]{Lehn}.
\end{Remark}

\subsection{The first Chern class of the tautological bundle}

Let $\mathcal Z$ be the universal family on $\Hilb{n}$, which is a
subvariety of $X\times\Hilb{n}$. Let $p$ denote the projection to
$\Hilb{n}$. Then $p_*\shfO_{\mathcal Z}$ is a vector bundle of rank
$n$ over $\Hilb{n}$. In the description \cite[Th.~1.9]{Lecture}, it is
the vector bundle associated with the principal $GL_n(\CC)$-bundle
$\widetilde H \to \Hilb{n} = \widetilde H/GL_n(\CC)$ with respect to
the vector representation of $GL_n(\CC)$. Let us denote it by
$\mathcal V$. It is denoted by $\shfO^{[n]}$ in \cite{Lehn}. It is
called the {\it tautological bundle}.

We consider the multiplication of the first Chern class $c_1(\mathcal
V)$ of $\mathcal V$ as an operator on $H^*_T(\Hilb{n})$.

\begin{Lemma}
  In the fixed points base $\{ [I_\lambda]\}$, the operator
  $c_1(\mathcal V)\cup\bullet$ is diagonalized:
  \begin{equation}\label{eq:16}
    c_1(\mathcal V)\cup[I_\lambda] 
    = -\left(n(\lambda) \ve_1 + n(\lambda') \ve_2\right)[I_\lambda].
  \end{equation}
\end{Lemma}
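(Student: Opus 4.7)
The plan is to compute the restriction of $c_1(\mathcal V)$ to the fixed point $I_\lambda$ and then invoke the projection formula, reducing the cup-product action on $[I_\lambda]$ to multiplication by that restriction.

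First, I would identify the fiber of $\mathcal V = p_*\shfO_{\mathcal Z}$ at a $T$-fixed point. By definition the fiber at $I_\lambda$ is the quotient $\CC[z,\xi]/I_\lambda$, and since $I_\lambda$ is a monomial ideal, this quotient has a canonical monomial basis $\{z^{i-1}\xi^{j-1}\}$ indexed by the boxes $(i,j)$ of the Young diagram $\lambda$, precisely as described in \subsecref{sec:torus}. Because the torus acts on $X$ by $(t_1,t_2)\cdot(z,\xi) = (t_1 z, t_2\xi)$, the induced action on the coordinate ring sends $z\mapsto t_1^{-1}z$ and $\xi\mapsto t_2^{-1}\xi$, so the monomial $z^{i-1}\xi^{j-1}$ sits in the weight space of weight $-(i-1)\ve_1 - (j-1)\ve_2$. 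This exhibits $\mathcal V|_{I_\lambda}$ as a $T$-module with a completely explicit character.

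Next I would use the general fact recalled in \secref{sec:equiv}: the equivariant first Chern class of a $T$-representation at a point is the sum of its weights, viewed as an element of $H^*_T(\mathrm{pt}) = \CC[\ve_1,\ve_2]$. Therefore
\begin{equation*}
  i_\lambda^* c_1(\mathcal V) = c_1(\mathcal V|_{I_\lambda}) = -\sum_{(i,j)\in\lambda}\bigl((i-1)\ve_1 + (j-1)\ve_2\bigr) = -\bigl(n(\lambda)\ve_1 + n(\lambda')\ve_2\bigr),
\end{equation*}
where in the last equality I would invoke the paper's rotated convention (so that $\lambda_i$ counts the height of the $i$-th column) together with the definition $n(\lambda) = \sum_i(i-1)\lambda_i$ to recognize $\sum(i-1) = n(\lambda)$ and, dually, $\sum(j-1) = n(\lambda')$ as sums over the boxes of $\lambda$.

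Finally, since $[I_\lambda] = i_{\lambda*}(1)$ by definition of the fixed point class, the projection formula yields
\begin{equation*}
  c_1(\mathcal V) \cap [I_\lambda] = c_1(\mathcal V)\cap i_{\lambda*}(1) = i_{\lambda*}\bigl(i_\lambda^* c_1(\mathcal V)\bigr) = -\bigl(n(\lambda)\ve_1 + n(\lambda')\ve_2\bigr)\,[I_\lambda],
\end{equation*}
which is exactly \eqref{eq:16}. Honestly there is no serious obstacle here; the only things to be careful about are (a) the sign coming from the contragredient action on functions, which fixes the overall minus sign, and (b) matching the summations $\sum_{(i,j)\in\lambda}(i-1)$ and $\sum_{(i,j)\in\lambda}(j-1)$ with $n(\lambda)$ and $n(\lambda')$ under the column/row convention used throughout the paper.
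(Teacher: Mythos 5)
Your proof is correct and follows essentially the same route as the paper: both reduce the statement via the projection formula to computing $i_\lambda^*c_1(\mathcal V)=c_1(\mathcal V|_{I_\lambda})$ as the sum of the $T$-weights of $\CC[z,\xi]/I_\lambda$, and then identify $\sum_{s\in\lambda}l'(s)=n(\lambda)$ and $\sum_{s\in\lambda}a'(s)=n(\lambda')$. The only difference is that you compute the character of the fiber directly (with the sign from the contragredient action), whereas the paper quotes it from the proof of \cite[Prop.~5.8]{Lecture}.
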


\begin{proof}
  Since $[I_\lambda]$ is the class of a fixed point, $c_1(\mathcal
  V)\cup[I_\lambda]$ is given by $c_1(\mathcal
  V|_{I_\lambda})[I_\lambda]$. The formula of the character of
  $\mathcal V|_{I_\lambda}$ is given in the proof of
  \cite[Prop.~5.8]{Lecture}. Then the equivariant first Chern class is
\begin{equation*}
  c_1(\mathcal V|_{I_\lambda})
  = - \sum_{s\in\lambda} \left(l'(s) \ve_1 + a'(s) \ve_2\right)
  = - n(\lambda) \ve_1 - n(\lambda') \ve_2.
\end{equation*}
\end{proof}

The above lemma is easy to prove, but has the following remarkable
consequence.
\begin{Corollary}
  $c_1(\mathcal V)\cup\bullet$ is equal to $\ve_1 \square^{\bk}$ under
  the isomorphisms \textup{(\ref{eq:7}, \ref{eq:12})}, where
  $\square^{\bk}$ is given by \eqref{eq:limit}.
\end{Corollary}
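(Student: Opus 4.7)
The plan is to verify the identity by evaluating both operators on the basis $\{P^{(\bk)}_\lambda\}_\lambda$ of $\Lambda\otimes\QQ(\ve_1,\ve_2)$, which, under the composition of isomorphisms \eqref{eq:7} and \eqref{eq:12}, corresponds to the basis $\Bigl\{\frac{1}{e(T^{\le 0}_{I_\lambda})}[I_\lambda]\Bigr\}_\lambda$ of $H^{T,lf}_*(\Hilb{n})_\loc$ by \thmref{thm:Jack}. Because both sides of the claimed identity are already given as diagonal operators in this basis by readily available formulas, the argument reduces to matching eigenvalues.

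On the geometric side, cap product with $c_1(\mathcal V)$ preserves fixed point classes: using the projection formula for $i_\lambda$ together with \eqref{eq:16}, we get
\begin{equation*}
  c_1(\mathcal V)\cap \frac{1}{e(T^{\le 0}_{I_\lambda})}[I_\lambda]
  = -\bigl(n(\lambda)\ve_1 + n(\lambda')\ve_2\bigr)\cdot \frac{1}{e(T^{\le 0}_{I_\lambda})}[I_\lambda],
\end{equation*}
since the scalar $1/e(T^{\le 0}_{I_\lambda})\in H^*_T(\mathrm{pt})_\loc$ commutes with cap product by $c_1(\mathcal V)$. On the symmetric function side, \eqref{eq:17} gives
\begin{equation*}
  \ve_1\,\square^{\bk} P^{(\bk)}_\lambda
  = \ve_1\bigl(n(\lambda')\bk - n(\lambda)\bigr) P^{(\bk)}_\lambda.
\end{equation*}
Substituting $\bk = -\ve_2/\ve_1$ as in \propref{prop:ident}, the eigenvalue becomes $-n(\lambda')\ve_2 - n(\lambda)\ve_1$, which agrees exactly with the one above.

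Since $H^{T,lf}_*(\Hilb{n})_\loc$ is spanned over $\QQ(\ve_1,\ve_2)$ by the classes $\frac{1}{e(T^{\le 0}_{I_\lambda})}[I_\lambda]$ (as the image of the fixed point basis under the isomorphism \eqref{eq:7}), two $\QQ(\ve_1,\ve_2)$-linear operators that agree on all of them are equal. This concludes the identification of $c_1(\mathcal V)\cup\bullet$ with $\ve_1\,\square^{\bk}$.

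There is essentially no obstacle here: the content of the statement is that the eigenvalue formula \eqref{eq:16}, which is an elementary consequence of the character formula for $\mathcal V|_{I_\lambda}$, coincides after the substitution $\bk=-\ve_2/\ve_1$ with the Jack eigenvalue $e_\lambda(\bk)$ from \eqref{eq:17}. The only point requiring a moment of care is ensuring that one uses the same normalization of $P^{(\bk)}_\lambda$ on both sides, which is guaranteed by \thmref{thm:Jack}; everything else is a direct calculation in the diagonal basis.
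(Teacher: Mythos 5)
Your proposal is correct and follows essentially the same route as the paper: both sides are diagonalized in the basis $\bigl\{\tfrac1{e(T^{\le 0}_{I_\lambda})}[I_\lambda]\bigr\}$, identified with $\{P^{(\bk)}_\lambda\}$ via Theorem~\ref{thm:Jack}, and the eigenvalues from \eqref{eq:16} and \eqref{eq:17} match after substituting $\bk=-\ve_2/\ve_1$.
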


\begin{proof}
  Recall that $\square^{\bk}$ is diagonalized in Jack symmetric
  functions $P^{(\bk)}_\lambda$ with eigenvalues $e_\lambda(\bk) =
  n(\lambda')\bk - n(\bk)$, see \eqref{eq:17}. Since
  $P^{(\bk)}_\lambda$ corresponds to the (normalized) fixed point
  class in \thmref{thm:Jack}, the assertion follows as $\ve_1
  e_\lambda(\bk) = c_1(\mathcal V|_{I_\lambda})$ in \eqref{eq:16}.
\end{proof}

Let us write $\square^{\bk}$ in terms of Heisenberg generators
$\PP{\alpha}{m}$. Recall our identification \propref{prop:ident} and the
commutation relation
\(
  \left[ \PP{\ve_2}{i}, \PP{\ve_2}{j}\right]
  = i\delta_{i+j,0} \bk \operatorname{id},
\)
we rewrite \eqref{eq:15} by
\begin{equation*}
  p_m\leftrightarrow \PP{\ve_2}{-m}, \qquad
  m \frac{\partial}{\partial p_m} \leftrightarrow \frac1{\bk}\PP{\ve_2}{m}
  = - \PP{\ve_1}{m}.
\end{equation*}
We get
\begin{equation*}
  \begin{split}
  & \ve_1 \square^{\bk}
\\
  =\; & - \sum_{m,n>0}
  \left(\frac{\ve_2}2 \PP{\ve_2}{-m-n} \PP{\ve_1}{m} \PP{\ve_1}{n}
  + \frac{\ve_1}2 \PP{\ve_2}{-m} \PP{\ve_2}{-n} \PP{\ve_1}{m+n}\right)
\\
  &\quad + \frac{\ve_1+\ve_2}2 \sum_{m>0} (m-1) \PP{\ve_2}{-m} \PP{\ve_1}{m}.
  \end{split}
\end{equation*}
Noticing $\Delta(1) = \ve_1\ve_2$, $\Delta^2(1) = (\ve_1\ve_2)^2$, and
$K_X = -\ve_1-\ve_2$, we rewrite this expression as
follows. ($\PP{\alpha}{0}$ is understood as $0$.) It is a special case of
Lehn's result which holds for any $X$.

\begin{Theorem}[Lehn \protect{\cite{Lehn}}]\label{thm:cubic}
  \begin{equation*}
    \begin{split}
    c_1(\mathcal V)\cup\bullet
    =  - \frac1{3!} \sum_{m_1+m_2+m_3=0} 
  & \normal{P_{m_1} P_{m_2} P_{m_3}}({1})
\\ 
  & + \frac{1}{4} \sum_{m} (|m|-1) \normal{P_{-m} P_{m}}({K_X}).
    \end{split}
  \end{equation*}
\end{Theorem}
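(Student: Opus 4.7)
The plan is to combine the preceding corollary, which identifies $c_1(\mathcal V)\cup\bullet$ with $\ve_1\square^{\bk}$, with the explicit Heisenberg expression for $\ve_1\square^{\bk}$ displayed just above, and to reorganise that expression into the coproduct-coloured form. No new geometric input is required; the argument is purely algebraic bookkeeping.

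First I will use $H^*_T(\mathrm{pt})$-linearity to replace each $\PP{\ve_j}{m}$ by $\ve_j\,\PP{1}{m}$. Each cubic monomial of $\ve_1\square^{\bk}$ then acquires the uniform scalar $\tfrac12(\ve_1\ve_2)^2$, and the quadratic monomial acquires $\tfrac12(\ve_1+\ve_2)\ve_1\ve_2 = -\tfrac12 K_X\ve_1\ve_2$. On the coproduct side, iterating $\Delta$ gives $\Delta^2(1)=(\ve_1\ve_2)^2\,1\otimes 1\otimes 1$ and $\Delta(K_X) = -K_X\ve_1\ve_2(1\otimes 1)$, whence $(P_{m_1}P_{m_2}P_{m_3})(1) = (\ve_1\ve_2)^2\,\PP{1}{m_1}\PP{1}{m_2}\PP{1}{m_3}$ and $\normal{P_{-m}P_m}(K_X) = -K_X\ve_1\ve_2\,\PP{1}{-m}\PP{1}{m}$ after normal ordering. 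The scalar prefactors on the two sides already agree; what remains is combinatorial.

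For the cubic part I will decompose $\sum_{m_1+m_2+m_3=0}$ into ordered triples of either two-positive-one-negative or two-negative-one-positive type; triples containing a zero drop because $P_0=0$. For each pair $(i,j)$ with $i,j>0$ the negative entry $-i-j$ can occupy any of three positions, and since $\PP{1}{i}$ commutes with $\PP{1}{j}$ whenever $i+j\neq 0$, all six orderings of $(i,j,-i-j)$ produce the same normal-ordered monomial; dividing by $3!$ and collecting the symmetric two-negative case reproduces both cubic terms of $\ve_1\square^{\bk}$. The quadratic sum is easier: $\normal{P_{-m}P_m}(K_X)$ is already normal-ordered for $m>0$, and the sum over $m<0$ duplicates that over $m>0$ under $m\mapsto -m$, converting the prefactor $\tfrac14$ into the needed $\tfrac12$. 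The only step demanding real care is this ordering count in the cubic sum---one must check that degenerate triples such as $(i,i,-2i)$ are neither over- nor undercounted---but the permutation symmetry of the normal-ordered monomial makes the count uniform, so the matching of coefficients is automatic.
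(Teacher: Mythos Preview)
Your proposal is correct and follows essentially the same route as the paper: the paper reduces the theorem to the displayed expression for $\ve_1\square^{\bk}$ and then simply writes ``Noticing $\Delta(1)=\ve_1\ve_2$, $\Delta^2(1)=(\ve_1\ve_2)^2$, and $K_X=-\ve_1-\ve_2$, we rewrite this expression as follows,'' leaving the bookkeeping implicit. You have made that bookkeeping explicit---the linearity substitution $\PP{\ve_j}{m}=\ve_j\PP{1}{m}$, the matching of scalar prefactors via $\Delta^2(1)$ and $\Delta(K_X)$, the factor-of-three count in the cubic sum, and the factor-of-two doubling in the quadratic sum---and all of these checks are accurate.
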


The commutator with $\PP{\alpha}{n}$ is given by
\begin{equation*}
  \left[c_1(\mathcal V)\cup\bullet, \PP{\alpha}{n}\right]
  = 
    \frac{n}2 \sum_{l+m=n} \normal{P_l P_m}(\alpha)
    - \frac{n(|n|-1)}2 P_n({K_X\cup\alpha}).
\end{equation*}
The formula is presented in this form in \cite[Th.~3.10]{Lehn} up to
the sign convention.
Since $\bigoplus_n H^T_*(\Hilb{n})$ is an irreducible representation
of the Heisenberg algebra, two formulas are equivalent.

\begin{Remark}
  Note that $\mathfrak q_n(\alpha)$ in \cite{Lehn} is our
  $\PP{\alpha}{-n}$.
\end{Remark}

The terms
\begin{equation*}
  L_n(\alpha) \defeq \frac12  \sum_{l+m=n} \normal{P_l P_m}(\alpha)
\end{equation*}
satisfy the Virasoro relation
\begin{equation}\label{eq:19}
  \left[L_n(\alpha), L_m(\beta)\right] = (n-m) L_{n+m}(\alpha\beta)
  - \frac{n^3-n}{12} \delta_{n+m,0} \la c_2(X) \alpha,\beta\ra
  \operatorname{id},
\end{equation}
where $c_2(X) = \ve_1\ve_2$. It appears as the composite
\begin{equation*}
  H^*_T(X)\xrightarrow{\Delta} H^*_T(X)\otimes H^*_T(X)
  \xrightarrow{\cup} H^*_T(X)
\end{equation*}
is multiplication with $-c_2(X)$. We leave a check of \eqref{eq:19} to
the reader.

\begin{Remark}
  It would be nice if one could deduce Lehn's result for general $X$
  from \thmref{thm:cubic}. In fact, Li-Qin-Wang \cite{LQW-universal}
  (see also \cite{Lehn-lecture}) proved that operators given by cup
  products of classes
  \begin{equation*}
    p_{1*}(\ch(\shfO_{\mathcal Z_n}) p_2^*(\operatorname{td}_X\cup\alpha)),
  \end{equation*}
  are expressed by universal polynomials in Heisenberg operators
  coupled with $\alpha\cup \{ 1_X, K_X, K_X^2, c_2(X) \}$, independent
  of $X$. Here $\mathcal Z_n$ is the universal subscheme in
  $\Hilb{n}\times X$, $p_1$, $p_2$ are projection from $\Hilb{n}\times
  X$ to $\Hilb{n}$ and $X$ respectively, and $\operatorname{td}_X$ is
  the Todd class of $X$. By Riemann-Roch, $c_1(\mathcal V)$ is the
  degree $2$ part of the above for $\alpha=1_X$. Thanks to the
  universality, it is enough to determine the polynomials for
  $X=\CC^2$. However the universality in \cite{LQW-universal} was
  derived from Lehn's result, and hence this argument does not work.
\end{Remark}

\bibliographystyle{myamsplain}
\bibliography{nakajima,mybib,tensor2,refined}

\def\cprime{$'$} \def\cprime{$'$} \def\cprime{$'$} \def\cprime{$'$}
\providecommand{\bysame}{\leavevmode\hbox to3em{\hrulefill}\thinspace}
\providecommand{\MR}{\relax\ifhmode\unskip\space\fi MR }
\providecommand{\MRhref}[2]{%
  \href{http://www.ams.org/mathscinet-getitem?mr=#1}{#2}
}
\providecommand{\href}[2]{#2}
\begin{thebibliography}{10}

\bibitem{Audin}
M.~Audin, \emph{Torus actions on symplectic manifolds}, revised ed., Progress
  in Mathematics, vol.~93, Birkh\"auser Verlag, Basel, 2004. \MR{2091310
  (2005k:53158)}

\bibitem{AMOS}
H.~Awata, Y.~Matsuo, S.~Odake, and J.~Shiraishi, \emph{Collective field theory,
  {C}alogero-{S}utherland model and generalized matrix models}, Phys. Lett. B
  \textbf{347} (1995), no.~1-2, 49--55. \MR{1322003 (96e:81183)}

\bibitem{2012arXiv1208.3696B}
R.~{Bezrukavnikov} and M.~{Finkelberg}, \emph{{Wreath Macdonald polynomials and
  categorical McKay correspondence}}, August 2012, with an appendix by Vadim
  Vologodsky, \href{http://arxiv.org/abs/1208.3696}{{\ttfamily arXiv:1208.3696
  [math.RT]}}.

\bibitem{BFG-Hilb}
R.~Bezrukavnikov, M.~Finkelberg, and V.~Ginzburg, \emph{Cherednik algebras and
  {H}ilbert schemes in characteristic {$p$}}, Represent. Theory \textbf{10}
  (2006), 254--298, With an appendix by Pavel Etingof. \MR{2219114
  (2007j:14004)}

\bibitem{BB}
A.~Bia{\l}ynicki-Birula, \emph{Some theorems on actions of algebraic groups},
  Ann. of Math. (2) \textbf{98} (1973), 480--497. \MR{0366940 (51 \#3186)}

\bibitem{Bri}
J.~Brian{\c{c}}on, \emph{Description de {$H{\rm ilb}^{n}C\{x,y\}$}}, Invent.
  Math. \textbf{41} (1977), no.~1, 45--89. \MR{0457432 (56 \#15637)}

\bibitem{Cheah-nest}
J.~Cheah, \emph{Cellular decompositions for nested {H}ilbert schemes of
  points}, Pacific J. Math. \textbf{183} (1998), no.~1, 39--90. \MR{1616606
  (99d:14002)}

\bibitem{Edidin-Graham}
D.~Edidin and W.~Graham, \emph{Equivariant intersection theory}, Invent. Math.
  \textbf{131} (1998), no.~3, 595--634. \MR{1614555 (99j:14003a)}

\bibitem{MR1795551}
G.~Ellingsrud, L.~G{\"o}ttsche, and M.~Lehn, \emph{On the cobordism class of
  the {H}ilbert scheme of a surface}, J. Algebraic Geom. \textbf{10} (2001),
  no.~1, 81--100. \MR{1795551 (2001k:14005)}

\bibitem{MR1432198}
G.~Ellingsrud and S.~A. Str{\o}mme, \emph{An intersection number for the
  punctual {H}ilbert scheme of a surface}, Trans. Amer. Math. Soc. \textbf{350}
  (1998), no.~6, 2547--2552. \MR{1432198 (98h:14007)}

\bibitem{Etingof-CM}
P.~Etingof, \emph{Calogero-{M}oser systems and representation theory}, Zurich
  Lectures in Advanced Mathematics, European Mathematical Society (EMS),
  Z\"urich, 2007. \MR{2296754 (2009k:53217)}

\bibitem{Haiman}
M.~Haiman, \emph{Hilbert schemes, polygraphs and the {M}acdonald positivity
  conjecture}, J. Amer. Math. Soc. \textbf{14} (2001), no.~4, 941--1006
  (electronic). \MR{1839919 (2002c:14008)}

\bibitem{Iar}
A.~A. Iarrobino, \emph{Punctual {H}ilbert schemes}, Mem. Amer. Math. Soc.
  \textbf{10} (1977), no.~188, viii+112. \MR{0485867 (58 \#5667)}

\bibitem{Lehn}
M.~Lehn, \emph{Chern classes of tautological sheaves on {H}ilbert schemes of
  points on surfaces}, Invent. Math. \textbf{136} (1999), no.~1, 157--207.
  \MR{1681097 (2000h:14003)}

\bibitem{Lehn-lecture}
\bysame, \emph{Lectures on {H}ilbert schemes}, Algebraic structures and moduli
  spaces, CRM Proc. Lecture Notes, vol.~38, Amer. Math. Soc., Providence, RI,
  2004, pp.~1--30. \MR{2095898 (2005g:14010)}

\bibitem{LQW-universal}
W.-P. Li, Z.~Qin, and W.~Wang, \emph{Stability of the cohomology rings of
  {H}ilbert schemes of points on surfaces}, J. Reine Angew. Math. \textbf{554}
  (2003), 217--234. \MR{1952174 (2003m:14007)}

\bibitem{LQW-jack}
\bysame, \emph{The cohomology rings of {H}ilbert schemes via {J}ack
  polynomials}, Algebraic structures and moduli spaces, CRM Proc. Lecture
  Notes, vol.~38, Amer. Math. Soc., Providence, RI, 2004, pp.~249--258.
  \MR{2096149 (2005k:14009)}

\bibitem{Lu-cus}
G.~Lusztig, \emph{Cuspidal local systems and graded {H}ecke algebras. {I}},
  Inst. Hautes \'Etudes Sci. Publ. Math. (1988), no.~67, 145--202. \MR{972345
  (90e:22029)}

\bibitem{Mac}
I.~G. Macdonald, \emph{Symmetric functions and {H}all polynomials}, second ed.,
  Oxford Mathematical Monographs, The Clarendon Press Oxford University Press,
  New York, 1995, With contributions by A. Zelevinsky, Oxford Science
  Publications. \MR{1354144 (96h:05207)}

\bibitem{MO}
D.~Maulik and A.~Okounkov, \emph{Quantum groups and quantum cohomology},
  \href{http://arxiv.org/abs/arXiv:1211.1287}{{\ttfamily arXiv:1211.1287}}.

\bibitem{Na-quiver}
H.~Nakajima, \emph{Instantons on {ALE} spaces, quiver varieties, and
  {K}ac-{M}oody algebras}, Duke Math. J. \textbf{76} (1994), no.~2, 365--416.
  \MR{MR1302318 (95i:53051)}

\bibitem{Jack}
\bysame, \emph{{Jack polynomials and Hilbert schemes of points on surfaces}},
  October 1996, \href{http://arxiv.org/abs/arXiv:alg-geom/9610021}{{\ttfamily
  arXiv:alg-geom/9610021}}.

\bibitem{Na-alg}
\bysame, \emph{Quiver varieties and {K}ac-{M}oody algebras}, Duke Math. J.
  \textbf{91} (1998), no.~3, 515--560. \MR{MR1604167 (99b:17033)}

\bibitem{Lecture}
\bysame, \emph{Lectures on {H}ilbert schemes of points on surfaces}, University
  Lecture Series, vol.~18, American Mathematical Society, Providence, RI, 1999.
  \MR{MR1711344 (2001b:14007)}

\bibitem{perv2}
H.~Nakajima and K.~Yoshioka, \emph{Perverse coherent sheaves on blow-up. {II}.
  {W}all-crossing and {B}etti numbers formula}, J. Algebraic Geom. \textbf{20}
  (2011), no.~1, 47--100. \MR{2729275}

\bibitem{MR1609203}
A.~S. Tikhomirov, \emph{The variety of complete pairs of zero-dimensional
  subschemes of an algebraic surface}, Izv. Ross. Akad. Nauk Ser. Mat.
  \textbf{61} (1997), no.~6, 153--180. \MR{1609203 (98m:14005)}

\bibitem{VasserotC2}
E.~Vasserot, \emph{Sur l'anneau de cohomologie du sch\'ema de {H}ilbert de
  {$\bold C^2$}}, C. R. Acad. Sci. Paris S\'er. I Math. \textbf{332} (2001),
  no.~1, 7--12. \MR{1805619 (2001k:14012)}

\bibitem{Wilson}
G.~Wilson, \emph{Collisions of {C}alogero-{M}oser particles and an adelic
  {G}rassmannian}, Invent. Math. \textbf{133} (1998), no.~1, 1--41, With an
  appendix by I. G. Macdonald. \MR{1626461 (99f:58107)}

\end{thebibliography}

\end{document}